\documentclass[12pt, reqno]{amsart}
\usepackage{amsmath,amsfonts,amssymb,amsthm}
\usepackage{anysize}
\marginsize{2cm}{2cm}{2cm}{2cm}
\usepackage{color}
\usepackage{enumitem}

\usepackage[pageref]{backref}
\usepackage[colorlinks=true, linkcolor=blue, citecolor=red, urlcolor=blue, backref=page]{hyperref}
\renewcommand*{\backrefalt}[4]{%
  \ifcase #1 %
    (Not cited.)%
  \or
    (Cited on page~\hyperlink{page.#2}{#2}.)%
  \else
    (Cited on pages~\hyperlink{page.#2}{#2}.)%
  \fi
}

\usepackage{setspace}
\usepackage{tikz}
\usetikzlibrary{graphs}
\usetikzlibrary{arrows}
\usetikzlibrary{positioning}
\usetikzlibrary{calc,fit,shapes.geometric}

\def\u{\mathfrak{u}}

\def\g{\mathfrak{g}}

\def\hcx{\{J_{\alpha}\}}

\def\C{\mathbb{C}}
\def\R{\mathbb{R}}
\def\Q{\mathbb{Q}}
\def\Z{\mathbb{Z}}
\def\N{\mathbb{N}}
\def\H{\mathbb H}

\def\al{\alpha}

\def\e{\operatorname{e}}
\def\d{\operatorname{d\!}}
\def\Ker{\operatorname{Ker}}
\def\Im{\operatorname{Im}}

\def\ad{\operatorname{ad}}
\def\tr{\operatorname{tr}}

\def\alt{\raise1pt\hbox{$\bigwedge$}}

\theoremstyle{plain}
\newtheorem{theorem}{\bf Theorem}[section]
\newtheorem{corollary}[theorem]{\bf Corollary}
\newtheorem{proposition}[theorem]{\bf Proposition}
\newtheorem{lemma}[theorem]{\bf Lemma}

\theoremstyle{definition}
\newtheorem{definition}[theorem]{\bf Definition}
\newtheorem{example}[theorem]{\bf Example}

\theoremstyle{remark}
\newtheorem{remark}[theorem]{\bf Remark}

\newcommand{\ri}{{\rm (i)}}
\newcommand{\rii}{{\rm (ii)}}
\newcommand{\riii}{{\rm (iii)}}

\newcommand{\vale}[1]{\textcolor{violet}{#1}}

\title[Betti and Hodge numbers]{Betti and Hodge numbers of solvmanifolds arising from integer polynomials}
\author{Adrián Andrada}
\email{adrian.andrada@unc.edu.ar}
\author{Valentina Chaves}
\email{vchaves@mi.unc.edu.ar}
\date{}

\address{FAMAF, Universidad Nacional de C\'ordoba and CIEM-CONICET, Av. Medina Allende s/n, Ciudad Universitaria, X5000HUA C\'ordoba, Argentina}
\keywords{Almost abelian Lie group, lattice, solvmanifold, integer polynomial, Betti numbers, Hodge numbers}
\subjclass[2020]{22E25, 22E40, 57T15, 53C15, 11C08}

\begin{document}

\begin{abstract}
We study the de Rham cohomology of three families of completely solvable almost abelian solvmanifolds—called basic, complex, and hypercomplex—constructed from a monic integer polynomial with positive distinct roots whose product equals 1, following the work of Andrada and Barberis. Under two algebraic restrictions on such polynomials—the full rank and quasi full rank conditions—we compute the Betti numbers and Poincaré polynomials of these manifolds. Moreover, we study the Dolbeault cohomology of the complex solvmanifolds by identifying them with generalized Nakamura manifolds recently introduced by Cattaneo and Tomassini. Assuming a suitable condition on the lattice, we compute their Hodge numbers, which exhibit a combinatorial structure related to Pascal's triangle in the full rank setting, and are described by explicit generating polynomials in the quasi full rank case.
\end{abstract}

\maketitle

\section{Introduction}

The study of solvmanifolds has played a fundamental role in differential geometry, serving as an invaluable source of explicit examples of compact manifolds equipped with various geometric structures. Within this broad class, almost abelian solvmanifolds stand out for their tractability. Recall that a solvmanifold $\Gamma\backslash G$ is called almost abelian if the simply connected Lie group $G$ is almost abelian, that is, its Lie algebra $\g$ has a codimension one abelian ideal; equivalently, $\g$ can be written as $\g=\R e_0\ltimes_A \R^d$, where the matrix $A\in \mathfrak{gl}(d,\R)$ encodes the adjoint action of $e_0$ on the abelian ideal $\R^{d}$. The Lie algebra $\g$ is also called almost abelian. It follows that the topology and geometry of these solvmanifolds can be largely understood through the algebraic properties of the defining matrix $A$. 

Many authors have recently made important contributions to the subject (see, for instance, \cite{Av, C-M, CM, FP1, FP2, Fr1, HO, Mo}, among others). In particular, complex structures on almost abelian Lie algebras were studied in \cite{LRV} and more recently in \cite{ABDGH} (where more detailed information on the Jordan form of the matrix $A$ was provided) and in \cite{AABRW} (where the nilpotent case was analyzed in depth, including formulas for the Betti and Hodge numbers of the associated complex nilmanifolds). On the other hand, the existence and main properties of hypercomplex structures on such Lie algebras were discussed in \cite{AB1, AB2}. In the latter work \cite{AB2}, the authors provided a method to construct completely solvable solvmanifolds equipped with invariant complex or hypercomplex structures, starting from a monic integer polynomial $p$ of degree $n \geq 2$ whose roots satisfy the following conditions: they are positive, distinct, and their product is equal to 1; such a polynomial is said to belong to $\Delta_n$. The complex solvmanifold associated with $p$ has dimension $2n+2$, whereas the hypercomplex solvmanifold has dimension $4n+4$.

In this paper, we study in detail the de Rham cohomology of the complex and hypercomplex solvmanifolds associated with a polynomial $p$ in $\Delta_n$. In fact, we also analyze a third solvmanifold associated with $p$, of dimension $n+1$, which we call \textit{basic}. A crucial fact for our purposes is that, when dealing with completely solvable Lie groups, Hattori's theorem provides an isomorphism between the de Rham cohomology of the solvmanifold and the Chevalley-Eilenberg cohomology of its Lie algebra. In this way, determining the Betti numbers reduces to a combinatorial problem on left-invariant forms. Specifically, in our case, this amounts to counting the vanishing partial sums among the logarithms of the roots of the polynomial $p$.

To obtain closed formulas, we impose certain algebraic restrictions on the polynomials: the \textit{full rank condition} (introduced by T.\ Payne in \cite{Pay}) and the \textit{quasi full rank condition} for self-reciprocal polynomials of even degree. Roughly speaking, a polynomial $p \in \Delta_n$ satisfies the full rank condition if no proper subset of its roots has product equal to 1, whereas it satisfies the quasi full rank condition if it is self-reciprocal and the only proper subsets of its roots whose product is 1 are of the form $\{r, r^{-1}\}$, or unions of them. Under the full rank condition, we are able to explicitly determine the Betti numbers of the basic, complex, and hypercomplex solvmanifolds; in particular, we show that the cohomology of the basic solvmanifold is minimal among all solvmanifolds of the same dimension. In the quasi full rank setting, we again determine the Betti numbers in the basic case, while those of the complex and hypercomplex solvmanifolds are encoded as the coefficients of explicitly computed Poincaré polynomials.

Moreover, although completely solvable solvmanifolds admit neither Kähler nor hyper-Kähler metrics (according to \cite[Main Theorem]{Hasegawa}), we show that our complex and hypercomplex solvmanifolds nevertheless exhibit remarkable cohomological similarities with these geometries: the complex solvmanifolds have even odd-indexed Betti numbers, as in compact Kähler manifolds, while the Betti numbers of our hypercomplex solvmanifolds satisfy Salamon's identity \cite{Sal1,Sal}, a topological constraint originally established for compact hyper-Kähler manifolds.

Finally, we turn to the study of the Dolbeault cohomology in the complex case. We show that the complex solvmanifolds considered in this work can be naturally identified with Nakamura manifolds recently introduced in \cite{CT}. This correspondence allows us to apply general results from \textit{loc. cit.} on their Dolbeault cohomology and derive explicit expressions for the Hodge numbers under a suitable condition on the lattice defining the solvmanifold. In the full rank setting, we obtain closed formulas for the Hodge numbers and describe the resulting Hodge diamond, which exhibits a combinatorial structure closely related to Pascal's triangle. In the quasi full rank case, we encode the Hodge numbers through explicit generating polynomials.

A distinctive aspect of the families studied here is that their cohomological structure is determined by arithmetic properties of the associated integer polynomials. This arithmetic-geometric correspondence yields a unified setting in which explicit formulas can be obtained in arbitrary dimensions. Beyond the computations themselves, it reveals how subtle relations among the roots of integer polynomials may impose strong constraints on the topology and complex geometry of the resulting solvmanifolds.

The paper is organized as follows. In Section 2, we collect preliminaries on complex and hypercomplex structures on manifolds, properties of almost abelian solvmanifolds, and the Chevalley-Eilenberg cohomology of diagonal almost abelian Lie algebras. In Section 3, building on the constructions developed in \cite{AB2}, we introduce the three families of completely solvable almost abelian Lie algebras (basic, complex, and hypercomplex) constructed from a polynomial $p \in \Delta_n$ and review the existence of lattices for their associated simply connected Lie groups. We also recall the full rank condition, expressing it in terms of logarithmic parameters for our purposes, and present the quasi full rank condition for self-reciprocal polynomials. Section 4 is devoted to computing the de Rham cohomology of the basic $(n+1)$-dimensional solvmanifolds, explicitly determining their Betti numbers (see Propositions \ref{prop: betti_base_case} and \ref{prop: betti_reciprocal}). In Sections 5 and 6, we extend these computations to $(2n+2)$-dimensional complex solvmanifolds (Theorems \ref{thm: cohomology_complex} and \ref{thm: qfr_generating_function}) and $(4n+4)$-dimensional hypercomplex solvmanifolds (Theorems \ref{thm: cohomology_hyp} and \ref{thm: qfr_generating_function_hyp}), obtaining their Poincaré polynomials. We also compare their topological properties with those of compact Kähler and hyper-Kähler manifolds. Section 7 explores the Dolbeault cohomology of the complex solvmanifolds, computing their Hodge numbers and Hodge diamonds (Theorems \ref{thm: hodge_complex} and \ref{thm: hodge_generating_function}). Section 8 presents explicit examples of polynomials that do not satisfy either the full rank or quasi full rank conditions, illustrating the resulting increase in the Betti and Hodge numbers. Finally, the Appendix provides number-theoretic proofs ensuring the existence, for every integer ($n \geq 2$), of full rank polynomials in $\Delta_n$, as well as quasi full rank polynomials.

\subsection*{Acknowledgments}
A.\ Andrada was partially supported by CONICET, SECYT-UNC and FONCYT, and V.\ Chaves was partially supported by an undergraduate research fellowship from Consejo Interuniversitario Nacional (Argentina). The authors thank Jonas Deré for his valuable assistance with integer polynomials and for pointing out reference \cite{Pay}, which proved crucial for this work, and Marco Freibert for his useful comments.

\section{Preliminaries}    

\subsection{Complex and hypercomplex manifolds}
A complex structure on a differentiable manifold $M$ is an automorphism $J$ of the tangent bundle $TM$ satisfying $J^2=-I $  and the 
integrability condition $N_{J}(X,Y) =0$ for all  vector fields $X,Y$ on $M,$ where $N_J$ is the Nijenhuis tensor:
\begin{equation*}  N_{J}(X,Y) =
[X,Y]+J([JX,Y]+[X,JY])-[JX,JY].  \label{nijen} 
\end{equation*}
Recall that the integrability of $J$ is equivalent to the existence of an atlas  on $M$ such that the transition functions are holomorphic maps \cite{NN}. 

A hypercomplex structure on $M$ is a triple of complex structures $\hcx$, $\al=1,2,3$, on $M$ satisfying the following conditions: 
\begin{equation}  \label{quat}
J_1J_2=-J_2J_1=J_3.
\end{equation}
Then $M$ has a family of complex structures 
$J_{y}=y_1J_1+y_2J_2+y_3J_3$ parameterized by points $y = (y_1,y_2,y_3)$ in the unit sphere $S^2 \subset \R^3$. It follows from \eqref{quat} that $T_pM$, for each $p\in M$, has an $\H$-module structure, where $\H$ denotes the quaternions; in particular, $\dim M \equiv 0 \; \pmod{4}$.

Given a hypercomplex structure $\hcx$ on $M$, there is a unique torsion-free connection $\nabla$ on $M$ such that $\nabla J_\al=0, \; \al =1,2,3$. It is called the Obata connection \cite{Ob}. Its holonomy group $\operatorname{Hol}(\nabla)$ is therefore contained in the quaternionic  general linear group $\operatorname{GL}(n, \H)$. 

A hyperhermitian structure on $M$ is a pair $(\hcx ,g )$ where $\hcx$ is a hypercomplex structure and $(J_{\al},g )$ is Hermitian for $\al =1,2,3$.  
An interesting subclass of hyperhermitian structures is given by hyper-K\"ahler structures \cite{Cal}, which are hyperhermitian structures such that $(J_\al , g)$ is K\"ahler for $\al =1,2,3$, that is, the  K\"ahler forms $\omega_\al$ associated to $(J_\al,g)$ are closed, $\al =1,2,3$. In this case, the Levi-Civita connection coincides with the Obata connection, and its holonomy group is contained in $\operatorname{Sp}\,(n)$, where $\dim M=4n$.  Since $\operatorname{Sp}\,(n)\subset \operatorname{SU}\,(2n)$, hyper-K\"ahler metrics are Ricci-flat.

\subsection{Almost abelian solvmanifolds} A \textit{solvmanifold} is a compact quotient $\Gamma\backslash G$, where $G$ is a simply connected solvable Lie group and $\Gamma$ is a discrete subgroup of $G$. Such a subgroup $\Gamma$ is called a \textit{lattice} of $G$. When $G$ is nilpotent and $\Gamma\subset G$ is a lattice, the compact quotient $\Gamma\backslash G$ is known as a nilmanifold.

It follows that $\pi_1(\Gamma\backslash G)\cong \Gamma$ and  $\pi_n(\Gamma\backslash G)=0$ for $n>1$. Furthermore, solvmanifolds are determined up to diffeomorphism by their fundamental groups. In fact:

\begin{theorem}\cite{Mos}\label{thm:solv-isom}
If $\Gamma_1$ and $\Gamma_2$ are lattices in simply connected solvable Lie groups 
$G_1$ and $G_2$, respectively, and $\Gamma_1$ is isomorphic to $\Gamma_2$, then $\Gamma_1 \backslash G_1$ is diffeomorphic to $\Gamma_2 \backslash G_2$.
\end{theorem}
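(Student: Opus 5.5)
The plan is to show first that the isomorphism $\Gamma_1\cong\Gamma_2$ is induced by a homotopy equivalence of the quotients. Then I will upgrade that homotopy equivalence to a diffeomorphism, using the fibration structure of solvmanifolds.

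\textbf{Step 1 (homotopy equivalence).} Each $G_i$ is a simply connected solvable Lie group, hence diffeomorphic to some $\R^{m_i}$. So $G_i\to\Gamma_i\backslash G_i$ is a universal cover with contractible total space, and $\Gamma_i\backslash G_i$ is a $K(\Gamma_i,1)$. Any isomorphism $\Gamma_1\to\Gamma_2$ is therefore realized by a homotopy equivalence $f\colon \Gamma_1\backslash G_1\to\Gamma_2\backslash G_2$. In particular, $m_1=m_2$, since the dimension equals the cohomological dimension of $\Gamma_i$ (these are closed aspherical manifolds).

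\textbf{Step 2 (induction via Mostow fibrations).} I will argue by induction on the dimension. Let $N_i$ be the nilradical of $G_i$. By Mostow's structure theorem, $\Gamma_i\cap N_i$ is a lattice in $N_i$, and $\Gamma_i N_i$ is closed. This gives a fibration
\[
\Gamma_i\backslash G_i\to T^{k_i}
\]
whose fiber is the nilmanifold $(\Gamma_i\cap N_i)\backslash N_i$. The subgroup $\Gamma_i\cap N_i$ should be characterized intrinsically, for instance as the maximal nilpotent normal subgroup of $\Gamma_i$, at least up to finite index. Consequently, an isomorphism $\Gamma_1\cong\Gamma_2$ identifies the fibers' fundamental groups and the bases' fundamental groups $\Z^{k}$.

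For nilmanifolds, Mal'cev rigidity applies: the lattice isomorphism extends uniquely to an isomorphism $N_1\to N_2$, which descends to a diffeomorphism of the quotients. The torus bases are handled by linear maps. The remaining task is to glue. Both total spaces are mapping-torus-type bundles over tori whose monodromy is determined by the conjugation action of $\Gamma_i$ on $\Gamma_i\cap N_i$. Matching these actions through the isomorphism, I would construct a fiber-preserving diffeomorphism that covers the torus diffeomorphism.

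\textbf{Main obstacle.} The hard step is Step 2's requirement that the decomposition be intrinsic to $\Gamma$. The group $\Gamma\cap N$ need not be exactly the Fitting subgroup of $\Gamma$, and $\Gamma_i$ may fail to be a semidirect product with $\Z^k$. If the direct approach fails, I would fall back on the topological route. In that route, solvmanifolds are closed aspherical manifolds with virtually polycyclic fundamental group, and the Borel conjecture is known for such groups (Farrell--Jones). This upgrades the homotopy equivalence of Step 1 to a homeomorphism. Smoothing would then need Mostow's original argument, which replaces $G_i$ by a suitable "algebraic hull" in which the lattice is Zariski dense and builds the diffeomorphism directly. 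I expect this is exactly how the cited result \cite{Mos} is established, so in the paper I would simply invoke it.
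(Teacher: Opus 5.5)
The paper gives no proof of this statement; it quotes it from \cite{Mos}, so your final decision to invoke the citation matches the paper. As an argument, however, your sketch does not prove the theorem. The gap is the one you flagged, and it is worse than you hoped.

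Step~1 is fine. In Step~2, the subgroup $\Gamma\cap N$ is not determined by the abstract group $\Gamma$, not even up to finite index. Take $G_1=\R^3$, $\Gamma_1=\Z^3$. Take also $G_2=\R\ltimes_\varphi\R^2$, with $\varphi(t)$ the rotation by angle $2\pi t$, and $\Gamma_2=\Z\ltimes\Z^2$. Since $\varphi(1)=I$, we have $\Gamma_2\cong\Z^3$, and both quotients are $T^3$.

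The nilradical of $G_2$ is only $\R^2$, because $\ad_{e_0}$ acts on $\R^2$ with eigenvalues $\pm 2\pi i$. Hence the Mostow fibration of $\Gamma_2\backslash G_2$ is $T^3\to T^1$ with fiber $T^2$, while that of $\Gamma_1\backslash G_1$ is $T^3\to\{\mathrm{pt}\}$. Moreover, $\Gamma_2\cap N_2\cong\Z^2$ has infinite index in the maximal nilpotent normal subgroup of $\Gamma_2$, which is $\Gamma_2$ itself.

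So an isomorphism $\Gamma_1\to\Gamma_2$ need not carry $\Gamma_1\cap N_1$ onto $\Gamma_2\cap N_2$, and the base dimensions $k_1,k_2$ can differ. There is nothing to match fiberwise, because the Mostow fibration depends on $G$, not only on $\Gamma$. Even when the fibrations do correspond, gluing requires more than matching monodromy, since the extension $1\to\Gamma\cap N\to\Gamma\to\Z^k\to 1$ need not split; but that is secondary.

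Your fallback does not close the gap either. The Borel conjecture gives only a homeomorphism. For closed aspherical manifolds with poly-$\Z$ fundamental group, homeomorphic does not imply diffeomorphic: there are smooth manifolds homeomorphic but not diffeomorphic to $T^n$ for $n\geq 5$. So the ``smoothing'' you delegate to Mostow is the entire content of the statement. An argument that builds the diffeomorphism directly from an object attached to $\Gamma$ alone, which is what the algebraic hull provides, makes the surgery-theoretic detour unnecessary. In the end your proposal reduces to the citation.

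For the application the paper actually makes (the Nakamura groups $G_M$ are completely solvable), a complete short argument is available. By Theorem~\ref{thm:Saito}, the isomorphism $f\colon\Gamma_1\to\Gamma_2$ extends to a Lie group isomorphism $F\colon G_1\to G_2$. This map satisfies $F(\gamma g)=f(\gamma)F(g)$, so it descends to a diffeomorphism $\Gamma_1\backslash G_1\to\Gamma_2\backslash G_2$. Such an $F$ automatically maps nilradical to nilradical, which is exactly what your Step~2 was missing.
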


A solvable Lie group $G$ is called \textit{completely solvable} if the adjoint operators $\ad_x:\g\to\g$, with $x\in \g=\operatorname{Lie}(G)$, have only real eigenvalues. 
The conclusion of the previous theorem can be strengthened when both solvable Lie groups $G_1$ and $G_2$ are completely solvable. Indeed, this is the content of Saito's rigidity theorem:

\begin{theorem}\cite{Sai}\label{thm:Saito}
Let $G_1$ and $G_2$ be simply connected completely solvable Lie groups and $\Gamma_1 \subset G_1, \, \Gamma_2\subset G_2$ lattices. Then every isomorphism
$f: \Gamma_1 \to \Gamma_2$ 
extends uniquely to an isomorphism of Lie groups $F: G_1 \to G_2$.
\end{theorem}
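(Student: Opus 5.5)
This is Saito's rigidity theorem, quoted from \cite{Sai}. I would prove it by algebraizing the situation, so that the extension is forced by Zariski density. The guiding fact is the following. For $G$ simply connected and completely solvable, every $\Ad(g)$ is triangularizable with positive real eigenvalues. Hence $\exp:\g\to G$ is a diffeomorphism, and $\Ad(G)\subset \operatorname{GL}(\g)$ sits inside a real algebraic group of the form $U\rtimes T$, with $U$ unipotent and $T$ an $\R$-split torus. Such a group has no compact or elliptic part.

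\textbf{Step 1 (Zariski density of the lattice).} I first show that $\Gamma$ is Zariski dense in a suitable algebraic envelope $\mathcal A(G)=N\rtimes T$ of $G$. Here $N$ is the unipotent hull of the nilradical and $T$ is the split torus generated by the semisimple parts of $\Ad(G)$.
\begin{itemize}[leftmargin=*]
\item For the nilradical part, $\Gamma\cap \operatorname{Nil}(G)$ is a lattice in the nilradical. By Malcev it is Zariski dense in the unipotent group $\operatorname{Nil}(G)$.
\item For the torus part, $\Gamma/(\Gamma\cap\operatorname{Nil}(G))$ is a lattice in the vector group $G/\operatorname{Nil}(G)$. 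Its image under the semisimple parts of $\Ad$ is Zariski dense in $T$. This is the step that uses real eigenvalues: a split torus has no nontrivial compact quotient, so a lattice of $\R^k$ mapping into $T$ through real exponentials cannot lie in a proper algebraic subgroup.
\end{itemize}
I expect this density statement to be the main obstacle. It is precisely where complete solvability is indispensable: for non-completely-solvable groups, e.g.\ the universal cover of $E(2)$, both density and rigidity fail. I would try to prove it by passing to the semisimple parts and reducing to a Kronecker-type argument for real characters $t\mapsto e^{\lambda t}$.

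\textbf{Step 2 (uniqueness).} Suppose $F,F'$ are two continuous extensions of $f$. Then $F'^{-1}F$ is an automorphism of $G_1$ that is the identity on $\Gamma_1$. Since $\exp$ is a diffeomorphism, it is determined by its differential $\phi\in\operatorname{Aut}(\g_1)$. The set $\{\gamma : \phi\circ\Ad(\gamma)=\Ad(\gamma)\circ\phi\}$ is Zariski closed and contains $\Gamma_1$. By Step 1 it therefore contains $G_1$, so $\phi$ commutes with $\Ad(G_1)$. I would then combine this with the fact that $\phi$ fixes $\log\Gamma_1$ pointwise in each one-parameter direction. Because $\log\Gamma_1$ spans $\g_1$ after taking Zariski closure of the exponential polynomial relations, this forces $\phi=\mathrm{id}$.

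\textbf{Step 3 (existence).} Here I would use the universal property of the algebraic hull of the polycyclic group $\Gamma_i$. By Mostow, each $\Gamma_i$ has an algebraic hull $H_{\Gamma_i}$, unique up to rational isomorphism. By Step 1 this hull can be identified with $\mathcal A(G_i)$. The isomorphism $f$ then extends to a rational isomorphism $\tilde f:\mathcal A(G_1)\to\mathcal A(G_2)$.

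It remains to see that $\tilde f(G_1)=G_2$. Inside $\mathcal A(G)$, I would characterize $G$ intrinsically as the unique connected, simply connected, completely solvable subgroup containing $\Gamma$ as a lattice and complementary to the maximal compact, here trivial, part. Concretely, $G$ is the connected subgroup whose Lie algebra is the real span of $\log$ of the Zariski closures of the cyclic groups $\langle\gamma\rangle$, $\gamma\in\Gamma$. Since each $\gamma$ has positive real eigenvalues, each $\langle\gamma\rangle$ lies on a unique one-parameter subgroup $e^{t\log\gamma}$. This description is preserved by the rational map $\tilde f$, so $F=\tilde f|_{G_1}$ is the desired Lie group isomorphism, and it extends $f$.
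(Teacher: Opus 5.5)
The paper gives no proof of this statement; it only cites Saito. So your proposal has to stand on its own. The overall plan is a legitimate route: embed $\Gamma$ Zariski-densely in an algebraic hull, use Mostow's uniqueness of hulls to extend $f$ rationally, then recover $G$ inside the hull. Your Kronecker argument for the torus part is also right: because the eigenvalues are positive, a character of the split torus is trivial on the image of the lattice iff it is trivial on the image of $G/\operatorname{Nil}(G)$.

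\textbf{The main gap: the envelope is the wrong one, so the existence step fails.} Mostow's uniqueness theorem applies to a hull $H_\Gamma$ in which $\Gamma$ is Zariski dense, $C(U)\subseteq U$, \emph{and} $\dim U$ equals the Hirsch length of $\Gamma$, i.e.\ $\dim G$. Your $\mathcal A(G)=N\rtimes T$, with $N$ the nilradical, has unipotent radical of dimension $\dim\operatorname{Nil}(G)<\dim G$ as soon as $G$ is not nilpotent. Step 1 therefore does not let you identify it with $H_\Gamma$, and the rational extension $\tilde f$ can actually fail to exist. The paper's own complex solvmanifolds give a counterexample:
\begin{itemize}
\item For $G_p^c=\R\ltimes\R^{2n+1}$ the nilradical is $\R^{2n+1}$, which contains the central direction. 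Let $z$ be a nonzero lattice vector in it.
\item The map $(t,v)\mapsto (v,\exp(tA_p^c))$ embeds $G_p^c$ in $\R^{2n+1}\rtimes T$, and $\Gamma_p^c$ is Zariski dense there.
\item The automorphism $(k,v)\mapsto (k,v+kz)$ of $\Gamma_p^c$ extends to $G_p^c$ as $(t,v)\mapsto(t,v+tz)$.
\item Any rational extension $\tilde f$ would have to send the semisimple element $(0,\exp(A_p^c))$ to $(z,\exp(A_p^c))=(z,1)\cdot(0,\exp(A_p^c))$. These two factors commute, so this element has nontrivial unipotent part.
\item Rational isomorphisms preserve Jordan decompositions, so no such $\tilde f$ exists.
\end{itemize}
What you need is the envelope coming from Mostow's semisimple splitting. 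Its unipotent radical is the nilshadow $U\supseteq \operatorname{Nil}(G)$, with $\dim U=\dim G$. Step 1 must then also show that the unipotent parts $\gamma_u$, $\gamma\in\Gamma$, fill out $U$ modulo $\operatorname{Nil}(G)$; your two bullets only produce $\operatorname{Nil}(G)$ and $T$. Alternatively, replace Step 1 by a Borel-density argument using compactness of $\Gamma\backslash G$. You also never say how $G$ sits in $N\rtimes T$: $G$ need not split over its nilradical, and $g\mapsto \operatorname{Ad}(g)_s$ is not a homomorphism.

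\textbf{Second error: the concrete description of $G$ in Step 3.} The Zariski closure of $\langle\gamma\rangle$ contains that of $\langle\gamma_s\rangle$, which is usually a torus of dimension $>1$. For instance, for $\gamma=(1,0)$ in the full rank basic case it is $(n-1)$-dimensional. So the span of the logarithms of these closures is the whole Lie algebra of the hull, not $\mathfrak{g}$.

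The mechanism that does work is the one in your last sentence:
\begin{itemize}
\item The identity component of the correct hull is simply connected and completely solvable, hence exponential. So each $\gamma$ has a unique logarithm there, and it equals $\log\gamma\in\mathfrak{g}$.
\item $\log\Gamma$ generates $\mathfrak{g}$ as a Lie algebra. The connected subgroup $S$ it generates contains $\Gamma$, so $S\backslash G\cong\R^{\dim G-\dim S}$ is compact, which forces $S=G$.
\item Hence $d\tilde f(\log\gamma)=\log f(\gamma)$ gives $d\tilde f(\mathfrak{g}_1)=\mathfrak{g}_2$, and so $\tilde f(G_1)=G_2$.
\end{itemize}

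\textbf{Uniqueness needs no density.} If $\Phi$ fixes $\Gamma_1$, injectivity of $\exp$ gives $\phi(\log\gamma)=\log\gamma$. The fixed set of $\phi$ is a subalgebra containing a generating set, so $\phi=\mathrm{id}$. Commutation with $\operatorname{Ad}(G_1)$ plays no role.
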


Moreover, solvmanifolds of completely solvable Lie groups have a very useful property concerning their de Rham cohomology. Indeed, Hattori \cite{Hat} proved that the natural inclusion 
\[
\alt^* \g^* \hookrightarrow \Omega^*(\Gamma\backslash G),
\] 
with $G$ completely solvable, induces an isomorphism 
\begin{equation}\label{deRham}
H^*(\g) \cong H^*_{dR}(\Gamma\backslash G).
\end{equation}
That is, the de Rham cohomology of the solvmanifold can be computed in terms of left-invariant forms. In particular, $H^*_{dR}(\Gamma\backslash G)$ does not depend on the lattice $\Gamma$.
The isomorphism \eqref{deRham} was previously proved for nilmanifolds by Nomizu \cite{Nom}.

\begin{remark} \label{rem: first betti number}
    In the general case, the natural inclusion $\alt^* \g^* \hookrightarrow \Omega^*(\Gamma\backslash G)$ induces just an injective map $H^*(\g) \hookrightarrow H^*_{dR}(\Gamma\backslash G)$. Since $\g \neq [\g,\g]$ for solvable $\g$ and $H^1(\g) \cong \g/[\g,\g]$, we get that the first Betti number of a solvmanifold satisfies $b_1(\Gamma\backslash G)\geq b_1(\g) \geq 1$.
\end{remark}

\begin{remark}
Recall that the Poincaré polynomial of an $n$-dimensional compact manifold is defined as $P_M(t) = \sum_{k=0}^{n} b_k(M) t^k$, where $b_k(M)$ denote the Betti numbers of $M$. Moreover, the Euler characteristic of $M$, denoted by $\chi(M)$, is defined as the alternating sum of its Betti numbers $\chi(M) = \sum_{k=0}^{n} (-1)^k b_k(M)$, that is, $\chi(M)=P_M(-1)$.

When $M$ is a solvmanifold $M=\Gamma\backslash G$, it is well known that $\chi(\Gamma\backslash G)=0$, since it is parallelizable. It follows that $P_{\Gamma\backslash G}(-1)=0$, and therefore its Poincaré polynomial is divisible in $\Z[t]$ by $(1+t)$.
\end{remark}

\begin{remark}
Let $G$ be a Lie group with Lie algebra $\g$. A complex structure $J$ on $G$ is said to be left-invariant if left translations by elements of $G$ are holomorphic maps. In this case, $J$ is determined by its value at the identity of $G$, which corresponds to a complex structure on $\g$. We point out that if $\Gamma $ is a lattice in $G$, any left-invariant complex structure on $G$ induces a complex structure on $\Gamma \backslash G$ which is called invariant. In this case, the natural projection $G \to \Gamma \backslash G$ is a local bihomolorphism. Left-invariant hypercomplex structures on $G$ (and invariant hypercomplex structures on $\Gamma \backslash G$) are defined similarly.
\end{remark}

Concerning the Dolbeault cohomology of solvmanifolds, we point out that if $\Gamma\backslash G$ is a nilmanifold, it is known that there is an isomorphism $H^{*,*}_{\overline{\partial}}(\g,J) \cong H^*_{\overline{\partial}}(\Gamma\backslash G,J)$ in several important cases (one of them when $J$ is nilpotent), and it is conjectured that this isomorphism always holds (see, for example, \cite{FRR, RTW}). However, this is far from being true for general solvmanifolds, even under the assumption of complete solvability (see, for instance, \cite{Kas}).

\medskip

We recall next that a  Lie group $G$ is said to be {\em almost abelian} if its Lie algebra $\g$ has a codimension one abelian ideal. Such Lie algebra will be called almost abelian and can be written as $\g= \R e_0 \ltimes \mathfrak{u}$, where $\mathfrak u$ is an abelian ideal of $\g$, and $\R$ is generated by some $e_0\notin \u$. After choosing a basis of $\u$, we may identify $\u$ with an abelian Lie algebra $\R^{d}$ and write $\g=\R e_0\ltimes_A \R^{d}$ for some $A\in \mathfrak{gl}(d,\R)$.

Accordingly, the Lie group $G$ can be written as a semidirect product $G=\R\ltimes_\varphi \R^{d}$, where the action is given by $\varphi(t)=e^{t A}$. Notice that a non-abelian almost abelian Lie group is $2$-step solvable, and it is nilpotent if and only if the operator $A$ is nilpotent.

Regarding the isomorphism classes of almost abelian Lie algebras, we have the following result, proved in \cite{Fr}.

\begin{lemma}\label{lem:ad-conjugate}
Two almost abelian Lie algebras $\g_1=\R e_1 \ltimes_{A_1} \R^d$ and $\g_2=\R e_2 \ltimes_{A_2}\R^d$ are isomorphic if and only if there exists $c\neq 0$ such that $A_2$ and $cA_1$ are conjugate. 
\end{lemma}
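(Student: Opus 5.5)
We prove both directions of Lemma~\ref{lem:ad-conjugate} directly. The key structural fact is that in the generic case the codimension one abelian ideal is essentially unique.

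\emph{Sufficiency.} Suppose $A_2 = P(cA_1)P^{-1}$ with $P\in \operatorname{GL}(d,\R)$ and $c\neq 0$. Define $\phi:\g_1\to\g_2$ by $\phi(e_1)=c^{-1}e_2$ and $\phi|_{\R^d}=P^{-1}$. Both algebras have abelian ideal $\R^d$, so we only need to check brackets of the form $[e_1,v]$. We have
\[
\phi([e_1,v]) = P^{-1}A_1 v,
\]
while
\[
[\phi(e_1),\phi(v)] = c^{-1}A_2P^{-1}v = c^{-1}P cA_1 P^{-1}P^{-1}v.
\]
The two sides do not yet agree, so we adjust the convention: take instead $\phi|_{\R^d}=P$ with $A_2 = cPA_1P^{-1}$. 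Then
\[
[\phi(e_1),\phi(v)] = c^{-1}A_2Pv = PA_1v = \phi([e_1,v]),
\]
as required. Hence $\phi$ is a Lie algebra isomorphism.

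\emph{Necessity.} Let $\phi:\g_1\to\g_2$ be an isomorphism. First assume that $\g_2$ is not abelian and is not isomorphic to $\mathfrak{h}_3\oplus\R^{d-2}$. In this case I would prove that $\R^d$ is the unique codimension one abelian ideal of $\g_2$, and likewise for $\g_1$. Granting this, $\phi(\R^d)$ is a codimension one abelian ideal of $\g_2$, so $\phi(\R^d)=\R^d$. Then $\phi(e_1)=ce_2+w$ for some $c\neq0$ and $w\in\R^d$. Writing $P=\phi|_{\R^d}$, we get
\[
PA_1v = \phi([e_1,v]) = [ce_2+w,Pv] = cA_2Pv,
\]
so $A_1 = cP^{-1}A_2P$. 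Equivalently, $A_2$ is conjugate to $c^{-1}A_1$, which is the claimed condition.

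\emph{Uniqueness of the ideal.} Suppose $\u'\neq\R^d$ is another codimension one abelian ideal. Then $\u'\cap\R^d$ has codimension $2$ in $\g$, and it commutes with both $\u'$ and $\R^d$, whose sum is $\g$. So $\u'\cap\R^d$ is central, which means $A$ vanishes on a subspace $W$ of codimension one in $\R^d$. Next choose $x=e_0+u\in\u'$ with $u\in\R^d$. Since $\u'$ is abelian, $x$ commutes with $W$; this is automatic. Moreover $\u'=\R x\oplus W$, so the image of $\ad_x$ lies in $[\g,\g]=\operatorname{Im}A$.

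So $A$ has rank at most $1$. There are two rank-one shapes:
\begin{enumerate}
\item[(a)] $A$ nilpotent of rank $1$, which gives $\mathfrak{h}_3\oplus\R^{d-2}$;
\item[(b)] $A$ diagonalizable with a single nonzero eigenvalue, which gives $\mathfrak{aff}(\R)\oplus\R^{d-1}$.
\end{enumerate}
For (b) I would check directly that the ideal is in fact unique. Here $\u'$ must contain $[\g,\g]$, which is spanned by the eigenvector $v$. But $\ad_x v\neq 0$ for every $x$ with a nonzero $e_0$-component, so such an $x$ cannot lie in the abelian $\u'$. Hence $\u'\subset\R^d$, and so $\u'=\R^d$.

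\emph{Remaining exceptional cases.} If $\g$ is abelian, then $A_1=A_2=0$ and there is nothing to prove. In case (a), both $A_1$ and $A_2$ must be nilpotent of rank one, because the isomorphism type of $\g$ detects this. All such matrices are conjugate, so the condition holds with $c=1$.

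The main obstacle is the uniqueness-of-ideal step, specifically handling the rank-one exceptions cleanly. I would organize the argument around the observation that a second abelian ideal forces $\operatorname{rank}A\le 1$, and then dispose of the two rank-one shapes by hand as above.
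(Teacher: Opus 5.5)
Your proof is correct and complete. The paper gives no argument of its own for this lemma; it quotes it from Freibert's work. Your route is the natural self-contained one: sufficiency by an explicit map, and necessity reduced to the uniqueness of the codimension-one abelian ideal. That uniqueness holds except when $\g$ is abelian or $\g\cong\mathfrak{h}_3\oplus\R^{d-2}$, and you dispose of those two cases by hand. What this buys over the paper's treatment is simply that nothing is outsourced. The one nontrivial input, that a second codimension-one abelian ideal forces $\operatorname{rank}A\le 1$, is proved directly from the centrality of $\u'\cap\R^d$.

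A few points to tighten when you write it up:
\begin{enumerate}
\item[(1)] Delete the false start in the sufficiency part. Just write $A_2=cPA_1P^{-1}$ and set $\phi(e_1)=c^{-1}e_2$, $\phi|_{\R^d}=P$.
\item[(2)] In the uniqueness step, $\operatorname{rank}A\le 1$ already follows from $W=\u'\cap\R^d\subseteq\ker A$ with $W$ of codimension one in $\R^d$. The detour through $x=e_0+u$ and $\operatorname{Im}\ad_x\subseteq[\g,\g]$ adds nothing and can be removed.
\item[(3)] In case (a), replace ``the isomorphism type detects this'' by the two facts it rests on: $\dim[\g,\g]=\operatorname{rank}A$, and $\g$ is nilpotent if and only if $A$ is. Together these force $A_1$ and $A_2$ to be nilpotent of rank one, hence conjugate.
\end{enumerate}
Your observation that a codimension-one ideal contains $[\g,\g]$, because the quotient is one-dimensional, is exactly what makes case (b) work; it is worth stating explicitly rather than in passing.
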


\begin{remark}\label{rem:nilp-sim}
It follows that two nilpotent almost abelian  Lie algebras as above are isomorphic if and only if $A_1$ and $A_2$ are conjugate, since for   any   nilpotent matrix $N$,   $cN$ and $N$ are conjugate  whenever $c\neq 0$.     
\end{remark}


In general, it is not easy to determine whether a given Lie group $G$ admits a lattice. A well known restriction is that if this is the case then $G$ must be unimodular (see, for instance, \cite{Mi}), i.e. the Haar measure on $G$ is left and right invariant, which is equivalent, when $G$ is connected, to  $\tr(\ad_x)=0$ for any $x$ in the Lie algebra $\g$ of $G$. In the nilpotent case, there is a well-known criterion  due to Malcev:

\begin{theorem}\cite{Mal}\label{thm:Mal}
A simply connected nilpotent Lie group has a lattice if and only if its Lie algebra admits a basis with respect to which the structure constants are rational.
\end{theorem}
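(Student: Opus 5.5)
The statement is Malcev's classical criterion, which the paper quotes from \cite{Mal}; I would prove it by passing between $G$ and $\g$ through the exponential map. Let $G$ be simply connected nilpotent with Lie algebra $\g$. Then $\exp:\g\to G$ is a global diffeomorphism, and the group law pulled back to $\g$ is given by the Baker--Campbell--Hausdorff series $X*Y=X+Y+\tfrac12[X,Y]+\tfrac1{12}([X,[X,Y]]-[Y,[X,Y]])+\cdots$. This series is a finite sum because $\g$ is nilpotent, and all of its coefficients are rational. The two directions then go as follows.

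\emph{Sufficiency.} Suppose $\g$ has a basis with rational structure constants, and let $\g_\Q$ be its $\Q$-span. This is a rational Lie algebra with $\g_\Q\otimes\R=\g$.
\begin{enumerate}[label=(\roman*)]
\item Refine to a Malcev basis $X_1,\dots,X_n$ of $\g_\Q$ adapted to the descending central series, so that each $\mathfrak{a}_k=\operatorname{span}\{X_k,\dots,X_n\}$ is an ideal and $[\g,\mathfrak{a}_k]\subseteq\mathfrak{a}_{k+1}$.
\item Use coordinates of the second kind, $(t_1,\dots,t_n)\mapsto \exp(t_1X_1)\cdots\exp(t_nX_n)$, which is a diffeomorphism $\R^n\to G$. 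In these coordinates the multiplication is polynomial with rational coefficients, and it is triangular: the $k$-th coordinate of a product is $s_k+t_k+P_k(s_1,\dots,s_{k-1},t_1,\dots,t_{k-1})$.
\item Rescale $X_k\mapsto N^{m_k}X_k$, with $N$ a common denominator and exponents $m_k$ increasing fast enough in $k$. The aim is that every $P_k$ maps $\Z^{k-1}\times\Z^{k-1}$ into $\Z$, and that inversion also preserves $\Z^n$. Then $\Gamma$, the image of $\Z^n$, is a subgroup.
\item $\Gamma$ is discrete because $\Z^n\subset\R^n$ is. It is cocompact because $[0,1]^n$ maps onto a set whose $\Gamma$-translates cover $G$. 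I would prove this covering by induction on $k$, exploiting the triangular form: correct the first coordinate by an integer, then the second, and so on.
\end{enumerate}

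\emph{Necessity.} Let $\Gamma\subset G$ be a lattice.
\begin{enumerate}[label=(\roman*)]
\item Show that $\log\Gamma$ spans $\g$ over $\R$. I would do this by induction on $\dim G$, using that $\Gamma\cap Z(G)$ is a lattice in $Z(G)$ and that $\Gamma Z(G)/Z(G)$ is a lattice in $G/Z(G)$. This is standard but requires care.
\item Show that the $\Q$-span $\g_\Q$ of $\log\Gamma$ is closed under the bracket and has $\Q$-dimension $n$.
\item Choose a basis of $\g$ inside $\g_\Q$. Its structure constants are then rational.
\end{enumerate}

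The main obstacle is step (ii) of the necessity part. Closure under the bracket should follow by inverting BCH: the bracket is a universal rational combination of group words. For instance, $[X,Y]$ can be recovered from $\log$ of commutators $\exp(X/m)^{\pm}\exp(Y/m)^{\pm}\cdots$, and these lie in $\tfrac1{M}\log\Gamma$ because powers and roots are available rationally via $\log(\gamma^k)=k\log\gamma$. The dimension bound says that $\log\Gamma$ has only $n$ rationally independent elements. For this I would use that $\Gamma$ is finitely generated, torsion-free and nilpotent, and has a Malcev basis $\gamma_1,\dots,\gamma_n$ with every element of the form $\gamma_1^{k_1}\cdots\gamma_n^{k_n}$. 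BCH then puts every $\log\gamma$ in the $\Q$-span of the $\log\gamma_i$.
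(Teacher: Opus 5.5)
The paper gives no proof of this statement; it quotes it from Malcev. So the comparison here is with the classical argument (Malcev; Raghunathan, Ch.~II; Corwin--Greenleaf, Ch.~5).

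\textbf{Sufficiency.} Your sufficiency half is that classical argument, except that in step (iii) the exponents go the wrong way. Write $t_k=N^{m_k}t_k'$. A monomial $a\,s^\alpha t^\beta$ of $P_k$ involves only indices $<k$, and it contains at least one $s$-variable and one $t$-variable, since $P_k(s,0)=P_k(0,t)=0$. After rescaling it becomes $a\,N^{\sum_{i<k}(\alpha_i+\beta_i)m_i-m_k}(s')^\alpha(t')^\beta$. So rapidly increasing $m_k$ makes the denominators worse, not better. For the Heisenberg algebra with $[X_1,X_2]=\tfrac12X_3$ you get $[N^{m_1}X_1,N^{m_2}X_2]=\tfrac12N^{m_1+m_2-m_3}(N^{m_3}X_3)$. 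Either of these fixes works:
\begin{enumerate}
\item scale every $X_k$ by the same factor $N^M$ with $M$ large, so that a monomial of degree $d\ge2$ gains $N^{M(d-1)}$;
\item use $N^{-m_k}$ with $m_k$ increasing fast.
\end{enumerate}

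\textbf{The gap in necessity (ii).} The elements $\exp(X/m)$ are not in $\Gamma$. The identity $\log(\gamma^k)=k\log\gamma$ only controls powers of a single element. That a word in roots of \emph{different} elements of $\Gamma$ has some power in $\Gamma$ is a nontrivial theorem (P.~Hall: in a nilpotent group, the elements having a power in a given subgroup form a subgroup), and your argument does not supply it. Moreover, $m^2\log[\exp(X/m),\exp(Y/m)]$ equals $[X,Y]$ only up to higher-order terms. You would therefore need exact finite inversion formulas, and these themselves involve roots.

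You can avoid roots altogether. For $\gamma,\delta\in\Gamma$ with $X=\log\gamma$ and $Y=\log\delta$, set $F(s,t)=\log(\gamma^s\delta^t\gamma^{-s}\delta^{-t})$. By the finite BCH formula, $F$ is a $\g$-valued polynomial equal to $st[X,Y]$ plus monomials $s^at^b$ with $a,b\ge1$ and $a+b\ge3$. Also $F(\Z^2)\subseteq\log\Gamma$. Lagrange interpolation on a finite grid then expresses the $st$-coefficient, namely $[X,Y]$, as a rational combination of finitely many values of $F$.

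Your dimension count needs two further inputs.
\begin{enumerate}
\item BCH writes $\log(\gamma_1^{k_1}\cdots\gamma_n^{k_n})$ in terms of iterated brackets of the $\log\gamma_i$. These lie in $\Q\log\gamma_1+\cdots+\Q\log\gamma_n$ only once that span is known to be bracket-closed. You can get this by running the interpolation argument by downward induction on $\Gamma_k=\langle\gamma_k,\dots,\gamma_n\rangle$.
\item The Malcev basis of $\Gamma$ must have exactly $\dim G$ elements. Extract this from the centre induction in (i), via $\Gamma\cap Z(G)\cong\Z^{\dim Z(G)}$. Otherwise $\dim_\Q\g_\Q$ could exceed $\dim G$, and an $\R$-basis chosen inside $\g_\Q$ need not have rational structure constants.
\end{enumerate}

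\textbf{The standard finish.} The centre induction in (i), done carefully, already yields a strong Malcev basis $X_1,\dots,X_n$ of $\g$ with $X_i\in\log\Gamma$ and $\Gamma=\exp(\Z X_1)\cdots\exp(\Z X_n)$. In these second-kind coordinates the group law is a polynomial map sending $\Z^{2n}$ into $\Z^n$, so it has rational coefficients. The coefficient of $s_it_j$ ($i>j$) in its $k$-th component is exactly the structure constant $c_{ij}^k$ in $[X_i,X_j]=\sum_kc_{ij}^kX_k$. This is how the classical proof concludes, and it makes step (ii) unnecessary.
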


On the other hand, there is a criterion for the existence of lattices on almost abelian Lie groups  which will prove very useful in forthcoming sections:

\begin{proposition}\label{prop: Bock}\cite{Bo}
Let $G=\R\ltimes_\varphi\R^d$ be a unimodular almost abelian Lie group. Then $G$ admits a lattice if and only if there exists  $t_0\neq 0$ such that $\varphi(t_0)$ is conjugate to a matrix in $\operatorname{SL}(d,\Z)$. In this situation, a lattice is given by $\Gamma=t_0 \Z\ltimes P\mathbb Z^{d}$, where $P\in \operatorname{GL}(d,\R)$ satisfies $P^{-1}\varphi(t_0)P\in \operatorname{SL}(d,\Z)$. 
\end{proposition}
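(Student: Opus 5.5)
Both implications of Proposition~\ref{prop: Bock} rest on one structural fact about a lattice $\Gamma$ in $G=\R\ltimes_\varphi\R^d$: $N:=\Gamma\cap\R^d$ is a lattice in $\R^d$, and $\Gamma/N$ is infinite cyclic. For the ``if'' direction I would verify directly that $\Gamma=t_0\Z\ltimes P\Z^d$ is a lattice. Write $B=P^{-1}\varphi(t_0)P\in\operatorname{SL}(d,\Z)$. Then $\varphi(kt_0)P\Z^d=PB^k\Z^d=P\Z^d$ for every $k\in\Z$, so $\Gamma$ is closed under the group law $(t,v)(s,w)=(t+s,v+\varphi(t)w)$ and under inversion; hence it is a subgroup. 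It is discrete because it is a discrete subset of $\R\times\R^d$. It is cocompact because the compact set $[0,t_0]\times P[0,1]^d$ surjects onto $\Gamma\backslash G$. To see this, given $(t,v)$, left-multiply by $(kt_0,0)$ to bring $t$ into $[0,t_0]$, and then by an element $(0,Pm)$ to bring $v$ into the fundamental domain $P[0,1]^d$. Unimodularity is not needed for this direction.

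For the ``only if'' direction, let $\Gamma$ be a lattice in $G$. The nilradical $\mathfrak n$ of $\g$ contains $\R^d$. I would first handle the case where $A$ is not nilpotent, so that the nilradical is exactly $\R^d$; the nilpotent case reduces to Malcev's criterion (Theorem~\ref{thm:Mal}) with an appropriate choice of basis. By Mostow's theorem, the intersection of a lattice with the nilradical is a lattice in the nilradical, so $N=\Gamma\cap\R^d$ is a lattice in $\R^d$. Consequently $\Gamma/N$ embeds as a lattice in $G/\R^d\cong\R$, and hence $\Gamma/N=t_0\Z$ for some $t_0\neq0$. Pick $\gamma=(t_0,v_0)\in\Gamma$. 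Conjugation by $\gamma$ preserves $N$ and acts on $\R^d$ by $\varphi(t_0)$, since $\R^d$ is abelian and so $v_0$ contributes nothing. Thus $\varphi(t_0)N=N$. Writing $N=P\Z^d$, we get $P^{-1}\varphi(t_0)P\in\operatorname{GL}(d,\Z)$. Its determinant is $e^{t_0\operatorname{tr}A}=1$ by unimodularity, so it lies in $\operatorname{SL}(d,\Z)$.

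The main obstacle is showing that $\Gamma\cap\R^d$ is a lattice in $\R^d$. I would invoke Mostow's result on nilradicals. An alternative in this concrete setting is to argue elementarily, as follows. Since $\Gamma$ is cocompact, its projection to $\R$ is nontrivial. Taking commutators of elements of $\Gamma$ yields elements $(\varphi(t)-I)w$ in $\Gamma\cap\R^d$. A Zassenhaus-type neighbourhood argument then shows that $\Gamma\cap\R^d$ spans $\R^d$, and discreteness makes it a lattice. In the nilpotent case, where $A$ is nilpotent, the correct $\mathfrak n$ is all of $\g$ and the statement follows from rationality, provided one picks $t_0$ with $\Gamma$ meeting the corresponding codimension-one ideal cocompactly. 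I expect this to require the most care.
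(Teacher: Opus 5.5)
The paper quotes this result from Bock without proof, so there is no argument in the paper to compare against. Your ``if'' direction is correct. For the fundamental domain $[0,t_0]\times P[0,1]^d$ you should take $t_0>0$, which is harmless because $P^{-1}\varphi(-t_0)P$ is the inverse of an element of $\operatorname{SL}(d,\Z)$. For non-nilpotent $A$, your ``only if'' argument is the standard one and is correct:
the nilradical is $\R^d$; Mostow gives that $N=\Gamma\cap\R^d$ is a lattice and that $\Gamma$ projects onto some $t_0\Z\subset\R$ with $t_0\neq0$; conjugation by $(t_0,v_0)\in\Gamma$ gives $\varphi(t_0)N=N$; and unimodularity forces determinant $1$.

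The gap is the nilpotent case, which you leave open and expect to be the hardest part. Your opening ``structural fact'' is false there. For $A=0$, $d=1$ and $\Gamma=\Z(1,0)+\Z(\sqrt2,1)\subset\R\times\R$, one has $\Gamma\cap(\{0\}\times\R)=\{0\}$, so the given $\R^d$ need not be rational with respect to $\Gamma$. Your route through Malcev's criterion and a codimension-one ideal that $\Gamma$ meets cocompactly would then at best describe a different splitting of $G$, which you would have to transport back to $\varphi$.

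The missing observation is that nothing needs to be proved in this case. If $A$ is nilpotent, then $\varphi(t)=e^{tA}$ is unipotent for every $t$. It is therefore conjugate over $\R$ to its Jordan form $I+\bigoplus_i J_{k_i}$, where $J_k$ is the nilpotent Jordan block. That matrix is upper unitriangular with integer entries, so it lies in $\operatorname{SL}(d,\Z)$. Thus the condition holds for \emph{every} $t_0\neq0$, and your ``if'' direction then shows that every nilpotent almost abelian group has a lattice.

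Separately, drop the ``elementary alternative'' for the non-nilpotent case. The commutator of $(t,v)$ and $(s,w)$ is $(0,(I-\varphi(s))v+(\varphi(t)-I)w)$, which lies in $\{0\}\times\Im A$. So commutators cannot produce lattice vectors in the $\ker A$ directions. This is exactly the situation of $A_p^c=0_1\oplus A_p^{\oplus 2}$ and $A_p^h$ in this paper, and Mostow's theorem is what handles those directions.
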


Note that if $E:=P^{-1}\varphi(t_0)P$ then $\Gamma\cong \Z\ltimes_E \Z^d$, where the group multiplication in this last group is given by 
\[ (m,(p_1,\ldots,p_d))\cdot (n,(q_1,\ldots, q_d))=(m+n,(p_1,\ldots,p_d)+E^m(q_1,\ldots, q_d)). \]

\subsection{Cohomology of diagonal almost abelian Lie algebras}
An almost abelian Lie algebra $\g=\R e_0\ltimes_A \R^d$ is called \textit{diagonal} if $A$ is diagonalizable over $\R$. According to Lemma \ref{lem:ad-conjugate}, we may simply assume that $A$ is a diagonal matrix, 
\begin{equation}\label{eq: A-diagonal}
    A=\operatorname{diag}(\lambda_1,\ldots, \lambda_d),
\end{equation} 
for some $\lambda_i\in \R$, in some basis $\{e_1,\ldots,e_d\}$ of $\R^d$. The Lie bracket in $\mathfrak{g}$ is completely determined by
\begin{equation*}
    [e_0, e_i] = \lambda_i e_i \quad \text{for } 1 \leq i \leq n
\end{equation*}
If $\{e^0, e^1, \dots, e^d\}$ denotes the dual basis of $\mathfrak{g}^*$, then the exterior derivative $\d\,\colon\g^*\to \alt^2 \g^*$ is given by
\begin{align}\label{eq:d-1forma}
  \d e^0 & =0, \\
  \d e^i & =-\lambda_i e^{0}\wedge e^i, \quad 1\leq i \leq d.\nonumber
\end{align} 

To compute $H^*(\mathfrak{g})$, we must understand the action of the exterior derivative in $k$-forms for $k\geq 2$. For convenience, we will use the notation $e^{j_1 \dots j_k} := e^{j_1} \wedge \dots \wedge e^{j_k}$ for the basis elements of $\alt^k \mathfrak{g}^*$, where $j_1 < j_2 < \dots < j_k$.

\begin{lemma} \label{lemma: formula_d}
Let $\g = \mathbb{R} e_0 \ltimes_{A} \mathbb{R}^d$ be a diagonal almost abelian Lie algebra with $A$ as in \eqref{eq: A-diagonal}. Then the exterior derivative $\d\,\colon \alt^k \mathfrak{g}^* \to \alt^{k+1} \mathfrak{g}^*$ acts on a basis element $e^{j_1 \dots j_k}$ as follows:
    \begin{enumerate}
        \item[$\ri$] If $j_1 = 0$, then $\d e^{0 j_2 \dots j_k} = 0$.
        \item[$\rii$] If $j_1 \neq 0$, then 
        \begin{equation}\label{eq: formula for d}
            \d e^{j_1 \dots j_k} = -\left( \sum_{i=1}^k \lambda_{j_i} \right) e^{0 j_1 \dots j_k}.
        \end{equation}
    \end{enumerate}
\end{lemma}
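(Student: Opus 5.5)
The plan is to use that $\d$ is an antiderivation of degree one on $\alt^*\g^*$, together with the formulas \eqref{eq:d-1forma}. Every basis element $e^{j_1\dots j_k}$ is a wedge product of basis $1$-forms. I will apply the graded Leibniz rule
\[
\d(e^{j_1}\wedge\cdots\wedge e^{j_k})=\sum_{i=1}^k (-1)^{i-1} e^{j_1}\wedge\cdots\wedge \d e^{j_i}\wedge\cdots\wedge e^{j_k},
\]
and then simplify each summand.

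For case $\ri$, the term with $i=1$ vanishes because $\d e^0=0$. For $i\geq 2$ we have $\d e^{j_i}=-\lambda_{j_i}e^0\wedge e^{j_i}$. That summand therefore contains the factor $e^0$ twice, once in the first slot and once inside $\d e^{j_i}$, so it vanishes. Hence $\d e^{0j_2\dots j_k}=0$.

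For case $\rii$, all indices are nonzero. The $i$-th summand equals
\[
(-1)^{i-1}(-\lambda_{j_i})\, e^{j_1}\wedge\cdots\wedge e^{j_{i-1}}\wedge e^0\wedge e^{j_i}\wedge\cdots\wedge e^{j_k}.
\]
Moving $e^0$ to the front requires passing it across the $i-1$ one-forms $e^{j_1},\dots,e^{j_{i-1}}$. This produces a sign $(-1)^{i-1}$, which cancels the sign from the Leibniz rule. Each summand thus becomes $-\lambda_{j_i}\,e^{0j_1\dots j_k}$, and summing over $i$ gives \eqref{eq: formula for d}. Since $0<j_1<\dots<j_k$, the resulting index list $0j_1\dots j_k$ is still increasing, so the right-hand side is already written in the standard basis.

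The only point requiring care is this sign bookkeeping: the sign from reordering must exactly cancel the Leibniz sign. Once that is checked, the proof is routine.
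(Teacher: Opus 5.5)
Your proof is correct and is essentially the paper's argument, which proves the same formula by induction on $k$ via the two-term rule $\d(e^{j_1}\wedge\alpha)=\d e^{j_1}\wedge\alpha-e^{j_1}\wedge\d\alpha$. You instead use the fully expanded $k$-term Leibniz rule and cancel the reordering sign $(-1)^{i-1}$ against the Leibniz sign, which is just the unrolled form of that induction.
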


\begin{proof}
    We proceed by induction on the degree $k$ of the form. For $k=1$, this is equation \eqref{eq:d-1forma}.
        
    Assume now that the formula holds for all $(k-1)$-forms ($k > 1$) on $\g$. Let $e^{j_1 \dots j_k}$ be a basis element of $\alt^k \g^*$, that can be written as $e^{j_1 \dots j_k} = e^{j_1} \wedge \alpha$, where $\alpha = e^{j_2 \dots j_k}$ is a $(k-1)$-form. Using the Leibniz rule, we get:
    \[ \d e^{j_1 \dots j_k} = \d\, (e^{j_1} \wedge \alpha) = \d e^{j_1} \wedge \alpha - e^{j_1} \wedge \d\alpha.\]
    Let us analyze two cases, depending on the index $j_1$:

    \begin{itemize}
        \item Case $j_1 = 0$: 
        Since $\d e^0 = 0$, the first term vanishes. By inductive hypothesis applied to $\alpha$ (which does not contain $0$), $\d\alpha = - \left( \sum_{i=2}^k \lambda_{j_i} \right) e^{0 j_2 \dots j_k}$. Then:
    \[ \d e^{0 j_2 \dots j_k} = -e^0 \wedge \d\alpha = 0.\]
     This proves $\ri$ for $k$-forms.

        \item Case $j_1 > 0$: Here, none of the indices in the $k$-form are $0$. Recall from \eqref{eq:d-1forma} that $\d e^{j_1} = -\lambda_{j_1} e^{0 j_1}$. By the inductive hypothesis, $\d\alpha = -\left( \sum_{i=2}^k \lambda_{j_i} \right) e^{0 j_2 \dots j_k}$. Hence, we obtain:
    \begin{align*}
        \d e^{j_1 \dots j_k} &= (-\lambda_{j_1} e^{0 j_1}) \wedge e^{j_2 \dots j_k} - e^{j_1} \wedge \left( -\left( \sum_{i=2}^k \lambda_{j_i} \right) e^{0 j_2 \dots j_k} \right) \\
        &= -\lambda_{j_1} e^{0 j_1 \dots j_k} - \left( \sum_{i=2}^k \lambda_{j_i} \right) e^{0 j_1 \dots j_k}= -\left( \sum_{i=1}^k \lambda_{j_i} \right) e^{0 j_1 \dots j_k}.
    \end{align*}
     This proves $\rii$ for $k$-forms. 
     \end{itemize}
     Therefore, the lemma holds for all $k \geq 1$.
\end{proof}

The explicit formula obtained in Lemma \ref{lemma: formula_d} reveals that the action of the exterior derivative $\d$ is entirely determined by the partial sums of the eigenvalues $\lambda_i$. Specifically, for any basis element $e^{j_1 \dots j_k}$ not containing $e^0$, its differential vanishes if and only if the sum of the associated eigenvalues is zero. By systematically counting these zero-sum combinations, we obtain a general combinatorial formula for the Betti numbers of any completely solvable, unimodular, diagonal almost abelian Lie algebra. This reduction simplifies the computation of the de Rham cohomology for the solvmanifolds studied in the subsequent sections. 

We will use the following notation: For any $k \in \Z$, let $\ell_k$ denote the number of index subsets $I \subset \{1, \dots, d\}$ of size $|I| = k$ for which $\sum_{i \in I} \lambda_i = 0$. By convention, we set $\ell_0 = 1$ (corresponding to the empty sum) and $\ell_k = 0$ for $k < 0$ or $k > d$. Note that if $\g$ is unimodular, the eigenvalues of $A$ satisfy $\sum_{i=1}^d \lambda_i = 0$. Then, in this case, $\ell_d = 1$.

\begin{lemma} \label{lemma: betti_combinatorial}
    Let $\g = \R e_0 \ltimes_{A} \R^d$ be a unimodular diagonal almost abelian Lie algebra with $A$ as in \eqref{eq: A-diagonal}. Then, the Betti numbers $b_k = \dim H^k(\g)$ are given by:
    \begin{equation} \label{eq: betti_ell}
        b_k = \ell_k + \ell_{k-1}
    \end{equation}
    for all $0 \leq k \leq d+1$.
\end{lemma}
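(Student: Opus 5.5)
Lemma \ref{lemma: formula_d} shows that the differential respects a simple splitting of the cochain complex, so we can compute kernels and images directly. Write $\alt^k\g^* = V_k \oplus e^0\wedge V_{k-1}$, where $V_j$ is the span of the basis elements $e^{j_1\dots j_j}$ with all indices in $\{1,\dots,d\}$. Then $\dim V_j=\binom{d}{j}$, with $V_j=0$ for $j<0$ or $j>d$. By Lemma \ref{lemma: formula_d}, $\d$ vanishes on $e^0\wedge V_{k-1}$. On $V_k$, $\d$ is diagonal: it sends each basis element $e^I$ to $-\lambda_I\, e^0\wedge e^I$, where $\lambda_I=\sum_{i\in I}\lambda_i$. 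Distinct basis elements $e^I$ go to multiples of distinct basis elements $e^0\wedge e^I$, so there is no cancellation between them.

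The kernel and image follow immediately. In degree $k$, the kernel of $\d$ is $e^0\wedge V_{k-1}$ plus the span of those $e^I$ with $|I|=k$ and $\lambda_I=0$. Its dimension is
\[
\dim\Ker \d|_{\alt^k\g^*}=\binom{d}{k-1}+\ell_k .
\]
The image of $\d$ coming from degree $k-1$ is spanned by the elements $e^0\wedge e^J$ with $|J|=k-1$ and $\lambda_J\neq 0$. Its dimension is
\[
\dim \Im \d|_{\alt^{k-1}\g^*}=\binom{d}{k-1}-\ell_{k-1}.
\]
Subtracting gives $b_k=\ell_k+\ell_{k-1}$.

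It remains to check the conventions at the boundary degrees. For $k=0$, the binomial terms vanish, $\ell_0=1$ and $\ell_{-1}=0$, so $b_0=1$ as expected. For $k=d+1$, we have $V_{d+1}=0$ and $\ell_{d+1}=0$, so $b_{d+1}=\ell_d=1$; this uses $\sum_i\lambda_i=0$, i.e., unimodularity. These conventions match the ones stated before the lemma, so the formula holds for all $0\le k\le d+1$. There is no real obstacle here; the only point needing care is that the map $e^I\mapsto e^0\wedge e^I$ is injective on basis elements, which is what makes the rank count exact.
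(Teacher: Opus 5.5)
Your proposal is correct and follows the paper's own route. You use Lemma~\ref{lemma: formula_d} to get $\dim\Ker = \binom{d}{k-1}+\ell_k$ and $\dim\Im = \binom{d}{k-1}-\ell_{k-1}$, then subtract; your explicit check of the boundary degrees $k=0$ and $k=d+1$ just spells out what the paper leaves to its conventions on $\ell_k$.
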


\begin{proof}
In this proof, we denote by ${\d\,}^k$ the linear operator $\d\,\colon \alt^k \g^*\to \alt^{k+1} \g^*$, for any $k$.

We compute the dimensions of the kernel and the image of the differential separately, knowing that $b_k = \dim(\Ker {\d\,}^k) - \dim(\Im {\d\,}^{k-1})$.
    
Let us first find $\dim(\Ker {\d\,}^k)$. A generic $k$-form $\omega \in \alt^k \g_p^*$ can be written as $\omega = \sum c_J e^J$, where $J = \{j_1, \dots, j_k\} \subset \{0, 1, \dots, d\}$ is a multi-index of size $k$ and $e^J = e^{j_1} \wedge \dots \wedge e^{j_k}$, with $j_1<\cdots<j_k$. If $\omega \in \Ker {\d\,}^k$, then:
    \[
        0=\d\omega = \sum_{|J|=k} c_J \d e^J = \sum_{0 \notin J} c_J \left( - \sum_{j \in J} \lambda_j \right) e^0 \wedge e^J
    \]
where the third equality follows from \eqref{eq: formula for d}. Since the $(k+1)$-forms $e^0 \wedge e^J$ (for distinct $J \subset \{1, \dots, d\}$) are linearly independent, the coefficient of each must be zero. Thus, for any $J$ of size $k$ not containing $0$, we have $c_J \left( \sum_{j \in J} \lambda_j \right) = 0$. 
Therefore, a basis for $\Ker {\d\,}^k$ is formed by:
    \begin{itemize}
        \item The $k$-forms $e^J$ such that $0 \in J$: the number of these forms is $\binom{d}{k-1}$.
        \item The $k$-forms $e^J$ such that $0 \notin J$ but for which $\sum_{j \in J} \lambda_j = 0$: by definition, the number of these forms is exactly $\ell_k$.
    \end{itemize}
    Summing these two contributions, we obtain $\dim(\Ker {\d\,}^k) = \binom{d}{k-1} + \ell_k$.

    Next, we compute $\dim(\operatorname{Im} {\d\,}^{k-1})$. The image is generated by the differentials $\d e^I$ for multi-indices $I$ of size $|I| = k-1$. According to Lemma \ref{lemma: formula_d}, if $0 \in I$, $\d e^I  = 0$. If $0 \notin I$, $\d e^I = -\left(\sum_{i \in I} \lambda_i\right) e^0 \wedge e^I$. 
    Thus, $\operatorname{Im}({\d\,}^{k-1})$ is generated by $k$-forms of the type $e^0 \wedge e^I$, which are linearly independent. A generator $\d e^I$ is zero if and only if $\sum_{i \in I} \lambda_i = 0$. 
    The total number of possible generators (subsets $I \subset \{1, \dots, d\}$ of size $k-1$) is $\binom{d}{k-1}$. Of these, exactly $\ell_{k-1}$ vanish. Therefore, $\dim(\operatorname{Im} {\d\,}^{k-1}) = \binom{d}{k-1} - \ell_{k-1}$.

    Combining our results, we conclude:
    \[
        b_k = \left( \binom{d}{k-1} + \ell_k \right) - \left( \binom{d}{k-1} - \ell_{k-1} \right) = \ell_k + \ell_{k-1}. \qedhere
    \]
\end{proof}

\section{Construction of the Solvmanifolds} \label{sec: construction}

In this section, we introduce the main objects of our study: three families of diagonal almost abelian solvmanifolds associated to a given polynomial $p \in \Delta_n$. These constructions are based on the work of Andrada and Barberis \cite{AB2}, where such Lie algebras were initially introduced to build solvmanifolds endowed with invariant complex or hypercomplex structures.

Following their approach, we define a \emph{basic} Lie algebra without any additional geometric structures, alongside its complex and hypercomplex counterparts. The detailed proofs guaranteeing that the simply connected Lie groups admit lattices, and that their quotients inherit the desired geometry, are fully established in \cite{AB2}. Nevertheless, we will explicitly provide the justification for the basic case here, as it relies on the exact same arguments.

For $n\in\N$, $n\geq 2$, let $\Delta_n$ denote the  subset of $\Z[x]$ given by all polynomials $p\in\Z[x]$ satisfying the following conditions:
\begin{enumerate}
\item[(i)] the degree of $p$ is $n$,
\item[(ii)] $p$ is monic,
\item[(iii)] $p$ has $n$ different real roots, all positive, and
\item[(iv)] $p(0)=(-1)^n$.
\end{enumerate}
We will also consider the following distinguished subset of  $\Delta_n$:
\begin{equation*}\label{eq:delta-prima}
\Delta_n ':=\{p\in \Delta_n : p(1)\neq 0\}.
\end{equation*}
We recall that $\Delta_n'$ is infinite for $n\geq 2$ (see \cite[Lemma 7.13]{AB2}). Other properties of the polynomials $p\in \Delta_n$ can be found in \cite[Section 7.2]{AB2}.

\begin{remark}\label{rem: Anosov}
    We point out that the polynomials in $\Delta'_n$ are examples of \textit{Anosov} polynomials, that is, monic polynomials with integer coefficients, with constant term equal to $\pm 1$, and without roots of modulus one (see \cite{Pay}).
\end{remark}


For any $p \in \Delta_n$, let $r_1, \dots, r_n$ be its positive real roots. Even though it is not strictly necessary, due to Lemma \ref{lem:ad-conjugate}, we assume that $r_1<r_2<\cdots < r_n$. We define the $n \times n$ diagonal matrix:
\begin{equation}\label{eq:log}
    A_p = \operatorname{diag}(\log r_1, \log r_2, \dots, \log r_n).
\end{equation}
Since $p(0) = (-1)^n$, we have $\operatorname{tr} A_p = \sum_{j=1}^n \log r_j = \log \left( \prod_{j=1}^n r_j \right) = \log(1) = 0$, which implies $A_p\in \mathfrak{sl}(n,\R)$. This matrix $A_p$ serves as the fundamental building block for the three families of completely solvable almost abelian Lie algebras that we will study:

\begin{itemize}
    \item \textbf{The basic case:} We define the basic $(n+1)$-dimensional Lie algebra as 
    \begin{equation*}
        \g_p = \R e_0 \ltimes_{A_p} \R^n.
    \end{equation*}
    
    \item \textbf{The complex case:} We define the $(2n+1) \times (2n+1)$ diagonal matrix 
    \begin{equation*}
        A_p^c = 0_1 \oplus A_p^{\oplus 2}.
    \end{equation*}
    Since $\operatorname{tr} A_p = 0$, it is clear that $\operatorname{tr} A_p^c = 0$, and consequently $A_p^c \in \mathfrak{sl}(2n+1, \R)$. The associated completely solvable almost abelian $(2n+2)$-dimensional Lie algebra is defined as 
    \begin{equation*}
        \g_p^c := \R e_0 \ltimes_{A_p^c} \R^{2n+1}.
    \end{equation*}
    This Lie algebra admits a complex structure, according to \cite{LRV}.

    \item \textbf{The hypercomplex case:} Similarly, we consider the $(4n+3) \times (4n+3)$ matrix 
    \begin{equation*}
        A_p^{h} = 0_3 \oplus A_p^{\oplus 4} \in \mathfrak{sl}(4n+3, \R)
    \end{equation*}
    and the corresponding unimodular Lie algebra is given by 
    \begin{equation*}
        \g_p^{h} := \R e_0 \ltimes_{A_p^{h}} \R^{4n+3}.
    \end{equation*}
    This Lie algebra admits a hypercomplex structure, according to \cite{AB1}.
\end{itemize}

\vspace{0.3cm}

The simply connected Lie groups associated to these algebras are semidirect products of the form $\R \ltimes_\varphi \R^d$, where the action $\varphi(t)$ is given by the exponential of the respective matrix. 

Following the reasoning proposed in \cite{AB2}, let us detail the basic case where $G_p = \R \ltimes_\varphi \R^n$ with $\varphi: \R \to \operatorname{SL}(n, \R)$ given by $\varphi(t) = \exp(t A_p)$. Setting $t=1$, we have that the characteristic and the minimal polynomial of the matrix $\varphi(1) = \exp(A_p)=\operatorname{diag}(r_1,\ldots,r_n)$ are both equal to $p$, therefore $\exp(A_p)$ is conjugate to the companion matrix $C_p$ of $p$. It follows from $p\in\Delta_n$ that $C_p\in \operatorname{SL}(n,\Z)$. Thus, according to Proposition \ref{prop: Bock}, $G_p$ admits a lattice $\Gamma_p:=\Z\ltimes_{\varphi(1)} Q_p \Z^{n}$, where $Q_p\in \operatorname{GL}(n,\R)$ satisfies $Q_p^{-1}\varphi(1) Q_p=C_p$. 

As proved in \cite{AB2}, a similar argument applies to the other cases: evaluating their respective group actions at $t=1$ gives the matrices $\exp(A_p^c)$ and $\exp(A_p^{h})$, which are conjugate to $I_1 \oplus C_p^{\oplus 2} \in \operatorname{SL}(2n+1, \Z)$ and $I_3 \oplus C_p^{\oplus 4} \in \operatorname{SL}(4n+3, \Z)$, respectively. Therefore, again by Proposition \ref{prop: Bock}, the simply connected Lie groups $G_p^c$ and $G_p^{h}$ also admit lattices $\Gamma_p^c$ and $\Gamma_p^{h}$. Furthermore, the corresponding quotients inherit invariant complex or hypercomplex structures.

To sum up the constructions of this section, we establish the following proposition covering all three cases:

\begin{proposition} \label{prop: three_solvmanifolds}
    For any $p \in \Delta_n$, the completely solvable unimodular Lie groups $G_p$, $G_p^c$, and $G_p^{h}$ admit lattices.
\end{proposition}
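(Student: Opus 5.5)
The plan is to apply Proposition \ref{prop: Bock} to each of the three groups with $t_0=1$. In every case this means exhibiting a matrix in $\operatorname{SL}(d,\Z)$ that is conjugate to $\varphi(1)$. Unimodularity is immediate: $\operatorname{tr} A_p=\sum_j\log r_j=\log\prod_j r_j=\log 1=0$, and $\operatorname{tr}A_p^c$ and $\operatorname{tr}A_p^h$ are multiples of $\operatorname{tr}A_p$. Complete solvability holds because all three defining matrices are real diagonal, so every $\ad_x$ has real eigenvalues.

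\textbf{Basic case.} Here $\varphi(1)=\exp(A_p)=\operatorname{diag}(r_1,\dots,r_n)$. Since the $r_j$ are pairwise distinct, this matrix is diagonalizable with distinct eigenvalues, so its minimal and characteristic polynomials both equal $p$. A matrix whose minimal polynomial equals its characteristic polynomial is cyclic, hence conjugate over $\R$ to the companion matrix $C_p$. Because $p$ is monic with integer coefficients, $C_p$ has integer entries. Its determinant is $(-1)^n p(0)=(-1)^n(-1)^n=1$, so $C_p\in\operatorname{SL}(n,\Z)$. Proposition \ref{prop: Bock} then yields the lattice $\Gamma_p$.

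\textbf{Complex and hypercomplex cases.} We have $\exp(A_p^c)=I_1\oplus\exp(A_p)^{\oplus 2}$ and $\exp(A_p^h)=I_3\oplus\exp(A_p)^{\oplus 4}$. Take $Q_p$ from the basic case and form the block-diagonal conjugator $1\oplus Q_p\oplus Q_p$ (respectively $I_3\oplus Q_p^{\oplus 4}$). This conjugates $\varphi(1)$ to $I_1\oplus C_p^{\oplus2}$ (respectively $I_3\oplus C_p^{\oplus4}$). These matrices have integer entries and determinant $1$, so Proposition \ref{prop: Bock} again gives the lattices $\Gamma_p^c$ and $\Gamma_p^h$.

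The only step with real content is the conjugacy of $\operatorname{diag}(r_1,\dots,r_n)$ to $C_p$, which relies on the distinctness of the roots. Everything else is bookkeeping with block matrices and determinants.
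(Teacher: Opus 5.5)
Your proposal is correct and follows the paper's argument. You apply Bock's criterion at $t_0=1$, use that $\exp(A_p)=\operatorname{diag}(r_1,\dots,r_n)$ has minimal and characteristic polynomial $p$ and is therefore conjugate to $C_p\in\operatorname{SL}(n,\Z)$, and then pass to $I_1\oplus C_p^{\oplus 2}$ and $I_3\oplus C_p^{\oplus 4}$ via block-diagonal conjugators; your explicit checks that $\det C_p=(-1)^n p(0)=1$, that the groups are unimodular, and that they are completely solvable simply fill in details the paper leaves implicit.
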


If $\Gamma, \Gamma^c$ and $\Gamma^h$ denote lattices in $G_p$, $G_p^c$, and $G_p^{h}$, respectively,  we obtain three families of compact almost abelian solvmanifolds associated to $p\in \Delta_n$:
    \begin{itemize}
        \item The basic $(n+1)$-dimensional solvmanifold $M_{p,\Gamma} = \Gamma \backslash G_p$,
        \item The complex $(2n+2)$-dimensional solvmanifold $M_{p,\Gamma}^c = \Gamma^c \backslash G_p^c$,
        \item The hypercomplex $(4n+4)$-dimensional solvmanifold $M_{p,\Gamma}^{h} = \Gamma^{h} \backslash G_p^{h}$.
    \end{itemize}
To keep the notation concise, we will usually denote these manifolds by $M_p,\, M_p^c,\, M_p^h$, omitting their explicit dependence on $\Gamma,\, \Gamma^c,\, \Gamma^h$.

\medskip

In the following sections, we study the de Rham cohomology of the solvmanifolds constructed above. To determine its Betti numbers, we apply Lemma \ref{lemma: betti_combinatorial} to the logarithmic parameters $s_i := \log r_i$, $i = 1, \dots, n$, where $r_1, \dots, r_n$ denote the roots of $p \in \Delta_n$ (see \eqref{eq:log}). Since this requires computing the integers $\ell_k$ counting vanishing partial sums of the parameters $s_i$, it is necessary to analyze the additive relations among these quantities. In particular, explicit computations become possible when such relations are sufficiently constrained. We will need the following definition.


\begin{definition} \label{def: full_rank}
    Let $p \in \Z[x]$ be a monic polynomial of degree $n \ge 2$ with constant term $(-1)^n$, and let $r_1, \dots, r_n \in \C$ be its roots. We say that $p$ satisfies the \textit{full rank condition} (or that $p$ \textit{has full rank}) if the only integral solutions to the equation
    \[
        r_1^{k_1} r_2^{k_2} \cdots r_n^{k_n} = 1
    \]
    are of the form $k_1 = k_2 = \dots = k_n$. 
\end{definition} 

The previous definition was introduced by Tracy Payne in \cite[Definition 3.5]{Pay}. Note that if $p$ satisfies the full rank condition then it is irreducible over $\Q$. In the case when $p \in \Delta_n$, the full rank condition can be written equivalently in the following way, which is more suitable for our purposes: 

Let $p \in \Delta_n$ have roots $r_1,\ldots, r_n$ and set $s_i=\log r_i$, so that 
\begin{equation}\label{eq:suma}
    \sum_{i=1}^n s_i = 0.
\end{equation}  
Then $p$ satisfies the \textit{full rank condition} (or $p$ \textit{has full rank}) if  the only $\Z$-linear relation among $\{s_1, \dots, s_n\}$ is the one given in \eqref{eq:suma}. That is, for any integer coefficients $k_i \in \Z$:
    \begin{equation}\label{eq:k_i iguales}
        \sum_{i=1}^n k_i s_i = 0 \iff k_1 = k_2 = \dots = k_n.
    \end{equation}
Note that this condition implies that there are no proper subsets of parameters whose sum is zero, nor are there any internal ``crossed'' relations between them.

\begin{remark}
Observe that if $p \in \Delta_n$ satisfies the full rank condition then $p(1)\neq 0$. Indeed, a root $r_i = 1$ would produce a nontrivial linear relation among the logarithmic parameters $s_i$, contradicting the full rank hypothesis. It follows that any full rank polynomial in $\Delta_n$ belongs to $\Delta'_n$.
\end{remark}

Next, we recall a result that provides a practical condition for verifying whether a polynomial satisfies the full rank condition:


\begin{proposition}\label{prop: tracy}
Let $r_1, \dots, r_n \in \C$ be the roots of an irreducible monic polynomial $p \in \Z[x]$ with constant term $(-1)^n$, and assume that $|r_j|\neq 1$ for all $j$ (in particular, $p$ is Anosov). Then $p$ satisfies the full rank condition in any of the following cases:
\begin{enumerate}
    \item[$\ri$] when precisely one of its roots has modulus greater than one, or
    \item[$\rii$] when the Galois group $G$ of $p$ acts doubly transitive on the set $\{r_1,\ldots,r_n\}$, or
    \item[$\riii$] when $n$ is prime.
\end{enumerate}
\end{proposition}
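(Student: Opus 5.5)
The plan is to work with the lattice of multiplicative relations
\[
L=\{k\in\Z^n : r_1^{k_1}\cdots r_n^{k_n}=1\}
\]
and use the Galois group $G$ of $p$. The key structural fact is that $L$ is stable under permutations of coordinates. If $\sigma\in G$ and $\prod r_i^{k_i}=1$, applying $\sigma$ gives $\prod r_{\sigma(i)}^{k_i}=1$, so the permuted vector lies in $L$.

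The constant term is $(-1)^n$, so $\prod r_i=1$ and $(1,\dots,1)\in L$. Let $W=\{k\in\Q^n:\sum k_i=0\}$ be the augmentation subspace, which is $G$-invariant, and put $L'=L\cap W$. I will show that $L'=0$ in each of the three cases. This suffices: given $k\in L$, let $c=\frac1n\sum k_i$. Then $n k - (\sum k_i)(1,\dots,1)$ is an integer vector in $L\cap W=L'$. Hence $nk=(\sum k_i)(1,\dots,1)$, so all $k_i$ are equal, which is the full rank condition.

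For $\rii$ and $\riii$ I would argue through representation theory. The $\Q$-span $L'_\Q$ is a $G$-invariant subspace of $W$.

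In case $\rii$, double transitivity of $G$ is equivalent to the standard representation on $W\otimes\C$ being irreducible. Hence $W$ is irreducible over $\Q$, so $L'_\Q$ is either $0$ or $W$. Suppose $L'_\Q=W$. Then some nonzero multiple $m(e_1-e_2)$ lies in $L$, so $(r_1/r_2)^m=1$. This means $r_1/r_2$ is a root of unity, so $|r_1|=|r_2|$.

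This is the one step I expect to need care: it does not immediately contradict distinctness of the roots. I would handle it as follows. By double transitivity, the same relation $r_i^m=r_j^m$ holds for all pairs $i,j$. Then $r_1^{mn}=\prod_i r_i^m=1$, which contradicts $|r_1|\neq1$. So $L'=0$.

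In case $\riii$, $n$ is prime and $G$ is transitive (since $p$ is irreducible), so $n$ divides $|G|$ and $G$ contains an $n$-cycle $\sigma$. As a $\Q[\langle\sigma\rangle]$-module, $W\cong\Q[x]/(\Phi_n)$, which is irreducible because the cyclotomic polynomial $\Phi_n$ is irreducible. So again $L'_\Q\in\{0,W\}$. The case $L'_\Q=W$ is excluded exactly as before: cyclic transitivity gives $r_i^m=r_j^m$ for all $i,j$, and then $|r_1|=1$.

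Case $\ri$ I would prove directly with logarithms of absolute values. Let $h_i=\log|r_i|$. Then $\sum h_i=0$, exactly one $h_i$ is positive, and all the others are strictly negative, using $|r_j|\neq1$. For $\sigma\in G$ write $h^\sigma=(h_{\sigma(i)})_i$. Each relation $k\in L$ gives $\sum_i k_i h^\sigma_i=0$ for every $\sigma$, since the permuted vector is also in $L$. By transitivity, for each position $j$ there is some $\sigma$ such that the unique positive entry of $h^\sigma$ sits at position $j$.

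Now take $k\in L'$ and choose $j$ with $k_j$ maximal. Using $\sum_i h^\sigma_i=0$, we get
\[
0=\sum_i k_i h^\sigma_i=\sum_{i\neq j}(k_i-k_j)h^\sigma_i .
\]
Every term on the right is a product of a nonpositive number and a negative number, so each term is $\ge 0$. Since the sum is zero, every term vanishes. This forces $k_i=k_j$ for all $i$, and together with $\sum k_i=0$ it gives $k=0$. The main care needed in the whole argument is the root-of-unity step in $\rii$ and $\riii$; case $\ri$ is elementary.
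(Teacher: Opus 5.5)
Your argument is correct. The paper gives no proof of this proposition; it cites \cite[Proposition 3.6]{Pay} for $\ri$ and $\rii$ and \cite[Proposition 5.1]{DW} for $\riii$, so yours is a genuinely different, self-contained route.

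Your organizing idea is that the relation lattice $L$ is a $G$-stable sublattice of $\Z^n$ containing $(1,\dots,1)$, so full rank is equivalent to $L\cap W=0$. This shows that $\rii$ and $\riii$ are two instances of one criterion: if the augmentation module $W$ is irreducible as a $\Q G$-module and some root lies off the unit circle, then $p$ has full rank. In $\rii$, double transitivity makes the standard representation absolutely irreducible. In $\riii$, prime degree yields an $n$-cycle $\sigma$, and $W\cong\Q[x]/(\Phi_n)$ as a $\Q[\langle\sigma\rangle]$-module.

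Case $\ri$ really does need a different mechanism, because $W$ can be reducible there. For example, for a Pisot unit generating a totally real cyclic quartic field, $W$ splits over $\Q$ into summands of dimensions $1$ and $2$. Your extremal-coefficient argument, using the sign pattern of $(\log|r_i|)_i$ together with transitivity, supplies exactly what is needed.

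The citations are shorter, but your version makes transparent which hypothesis does what. In $\rii$ and $\riii$, the condition $|r_j|\neq 1$ enters only to exclude $L'_\Q=W$; in $\ri$, it guarantees that the non-dominant $\log|r_i|$ are strictly negative.

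One small streamlining in $\rii$ and $\riii$: once $L'_\Q=W$, you get $L_\Q=\Q^n$ because $(1,\dots,1)\in L$. Hence $m e_1\in L$ for some $m\geq 1$, i.e.\ $r_1^m=1$, which contradicts $|r_1|\neq 1$ immediately. You then do not need to invoke double transitivity or the $n$-cycle a second time.
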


Items $\ri$ and $\rii$ of Proposition \ref{prop: tracy} were proved in \cite[Proposition 3.6]{Pay}, whereas item $\riii$ was proved in \cite[Proposition 5.1]{DW}.

\begin{remark}
    The distinguished root in Proposition \ref{prop: tracy}$\ri$ is a \textit{Pisot number} (also known as \textit{Pisot–Vijayaraghavan} or \textit{PV} number), that is, a real algebraic integer greater than 1, all of whose Galois conjugates are less than 1 in absolute value.
\end{remark}

A natural question is how the cohomology behaves once the strict requirement of nonzero partial sums is relaxed. A particularly interesting case arises from self-reciprocal polynomials. We recall their definition below.

\begin{definition}
    Given a polynomial $p$ of degree $n$, we define the associated polynomial $p^*$ given by:
    \[ p^*(x)=(-1)^nx^np(x^{-1}).  \]
We say that $p$ is \textit{self-reciprocal} if $p^*=(-1)^np$. That is, the coefficients of $p^*$ are the coefficients of $p$ in reverse order.
\end{definition}
We note that:
\begin{itemize}
    \item $p^*$ is monic precisely when  $p(0)= (-1)^n$,
    \item $(p^*)^*=p$,
    \item If $p(0) \neq 0$ and $r_1, \dots, r_n$ are the roots of $p$, then $r_1^{-1}, \dots, r_n^{-1}$ are the roots of $p^*$,
    \item $p\in \Z[x]$ if and only if $p^*\in\Z[x]$.
    \item $p\in \Delta_n$ if and only if $p^*\in\Delta_n$.
    \item If $p$ is self-reciprocal with odd degree, then $p(1)=0$. In particular, if $p\in \Delta_n$ is self-reciprocal and $n$ is odd, then $p\notin \Delta'_n$.
\end{itemize}

Since we are interested only in polynomials lying in $\Delta_n'$, in the next sections we will only consider self-reciprocal polynomials $p$ of even degree $n=2m$ such that $1$ is not one of its roots. In this case, its roots naturally appear in pairs $\{r, r^{-1}\}$, whose product is clearly 1; next we will define a family of self-reciprocal polynomials such that the only proper subsets of its roots whose product equals 1 are unions of these pairs. 

\begin{definition} \label{def: quasi_full_rank}
Let $p \in \Z[x]$ be a self-reciprocal Anosov polynomial of even degree $n=2m$, and let $r_1, \dots, r_m, r_1^{-1},\ldots, r_m^{-1} \in \C$ be its roots, with $|r_i|>1$ for all $i$. We say that $p$ satisfies the \textit{quasi full rank condition} (or that $p$ \textit{has quasi full rank}) if the only integral solutions to the equation
    \[
        r_1^{c_1} r_2^{c_2} \cdots r_m^{c_m} (r_1^{-1})^{c'_1}\ldots (r_m^{-1})^{c'_m} = 1
    \]
    are of the form $c_i = c'_i$ for all $i \in \{1, \dots, m\}$. Equivalently, for any integer coefficients $a_i \in \Z$,
\begin{equation} \label{eq: qfr_multiplicative_r}
    r_1^{a_1}\cdots r_m^{a_m}=1 \iff a_1 = \dots = a_m = 0.
\end{equation}
    
\end{definition}

For a self-reciprocal polynomial $p \in \Delta'_n$ of even degree $n = 2m$, this condition naturally translates to its additive parameters $s_i=\log r_i$. We say that $p$ has \textit{quasi full rank} if for any integer coefficients $c_i, c'_i \in \Z$, the additive relation
\begin{equation}\label{eq: QFR}
    \sum_{i=1}^m c_i s_i + \sum_{i=1}^m c'_i (-s_i) = 0
\end{equation}
holds if and only if $c_i = c'_i$ for all $i \in \{1, \dots, m\}$. This ensures that there are no ``cross-index'' cancellations between parameters belonging to different pairs.

\medskip

We will verify in the Appendix that for each $n\geq 2$ (respectively, $n=2m$ with $m \geq 1$) there exist polynomials in $\Delta'_n$ satisfying the full rank condition (respectively, polynomials in $\Delta'_n$ satisfying the quasi full rank condition).

\begin{remark}\label{rem:full rank}
    Regarding the full rank condition for the polynomial $p^*$, it is clear from Definition \ref{def: full_rank} that $p^*$ satisfies the full rank condition if and only if $p$ does.
\end{remark}

\section{De Rham cohomology of the basic solvmanifolds}

As stated in the preliminaries, the complete solvability of the Lie group $G_p$ implies, by Hattori's isomorphism \eqref{deRham}, that the de Rham cohomology of the solvmanifold $M_p = \Gamma \backslash G_p$ is isomorphic to the Chevalley-Eilenberg cohomology of its Lie algebra $\g_p$. Thus, the computation of the Betti numbers of $M_p$ reduces to the algebraic problem of determining $H^*(\g_p)$. We will do this assuming that $p \in \Delta'_n$ belongs to one of the two families considered above: (i) Full rank polynomials; (ii) Quasi full rank polynomials.

\subsection{The full rank case}

\begin{proposition} \label{prop: betti_base_case}
    Let $M_{p} = \Gamma \backslash G_p$ be a basic $(n+1)$-dimensional almost abelian solvmanifold constructed from a polynomial $p \in \Delta'_n$ satisfying the full rank condition. Then, the Betti numbers $b_k = \dim H^k_{dR}(M_p)$ are given by:
    \[
        b_k = \begin{cases} 
        1 & \text{if } k = 0, 1, n, n+1, \\
        0 & \text{if } 1 < k < n.
        \end{cases}
    \]
\end{proposition}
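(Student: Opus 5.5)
By Hattori's isomorphism \eqref{deRham}, the Betti numbers of $M_p$ coincide with those of $\g_p$, because $G_p$ is completely solvable. Now $\g_p=\R e_0\ltimes_{A_p}\R^n$ is a unimodular diagonal almost abelian Lie algebra with eigenvalues $\lambda_i=s_i=\log r_i$. Lemma \ref{lemma: betti_combinatorial} therefore gives $b_k=\ell_k+\ell_{k-1}$ for $0\le k\le n+1$, where $\ell_k$ counts the subsets $I\subset\{1,\dots,n\}$ with $|I|=k$ and $\sum_{i\in I}s_i=0$. The whole proof thus reduces to computing the numbers $\ell_k$ under the full rank hypothesis.

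The key step is to show that $\ell_0=\ell_n=1$ and $\ell_k=0$ for $1\le k\le n-1$.

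\begin{itemize}
\item The value $\ell_0=1$ holds by convention.
\item The value $\ell_n=1$ follows from \eqref{eq:suma}, which expresses unimodularity ($\prod r_i=p(0)(-1)^n=1$).
\item For $1\le k\le n-1$, suppose $I$ is a subset of size $k$ with $\sum_{i\in I}s_i=0$. Set $k_i=1$ for $i\in I$ and $k_i=0$ otherwise. This gives an integral relation $\sum k_i s_i=0$. Since $I$ is a nonempty proper subset, the coefficients $k_i$ are not all equal, which contradicts \eqref{eq:k_i iguales}. Hence $\ell_k=0$.
\end{itemize}

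Substituting these values, we get:
\begin{itemize}
\item $b_0=\ell_0=1$;
\item $b_1=\ell_1+\ell_0=1$;
\item $b_k=0$ for $2\le k\le n-1$, since both $\ell_k$ and $\ell_{k-1}$ vanish there;
\item $b_n=\ell_n+\ell_{n-1}=1$;
\item $b_{n+1}=\ell_n=1$, using $\ell_{n+1}=0$.
\end{itemize}

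The boundary cases need a small check. For $n=2$ there is no middle range, and the formulas still give $b_1=\ell_1+\ell_0=0+1=1$ and $b_2=\ell_2+\ell_1=1+0=1$. The only substantive point in the argument is the translation of the full rank condition into the vanishing of proper partial sums, and it is immediate from \eqref{eq:k_i iguales}. I therefore expect no real obstacle; the only care needed is to keep the indices consistent at the ends of the range of $k$.
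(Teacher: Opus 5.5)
Your proposal is correct and follows the paper's proof: Hattori's isomorphism, Lemma~\ref{lemma: betti_combinatorial}, and the observation that full rank forces $\ell_k=0$ for $1\le k\le n-1$ while $\ell_0=\ell_n=1$. Your explicit indicator-vector argument and the $n=2$ boundary check make precise what the paper states in one line.
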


\begin{proof}
    By hypothesis, no proper partial sums vanish. The only vanishing sums are the empty sum ($\ell_0 = 1$) and the total sum, which is always zero since $A_p \in \mathfrak{sl}(n, \R)$ ($\ell_n = 1$). For all intermediate lengths $1 \leq k < n$, we have $\ell_k = 0$. By convention, $\ell_{-1} = 0$ and $\ell_{n+1} = 0$.
    Applying the formula $b_k = \ell_k + \ell_{k-1}$ from Lemma \ref{lemma: betti_combinatorial}, the result follows immediately.
\end{proof}

Proposition \ref{prop: betti_base_case} highlights a remarkable topological property of the basic solvmanifolds arising from a polynomial $p \in \Delta'_n$ that satisfies the full-rank condition: among all solvmanifolds of a fixed dimension, these basic solvmanifolds exhibit the minimal possible de Rham cohomology. Indeed, since any $(n+1)$-dimensional solvmanifold is compact, connected, and orientable, we automatically have $b_0 = b_{n+1} = 1$. Furthermore, Remark \ref{rem: first betti number} implies that $b_1 \geq 1$, which yields $b_{n} \geq 1$ by Poincaré duality. Consequently, the total Betti number satisfies $\sum_{k=0}^{n+1} b_k \geq 4$. According to Proposition \ref{prop: betti_base_case}, this minimum value is attained precisely by these basic solvmanifolds.

\begin{example}\label{ex: ejemplos basicos}
Let us consider the cases $n=2$ and $n=3$.

For $n=2$, it follows from \cite[Example 7.9]{AB2} that 
\[ \Delta_2'=\Delta_2=\{ h_m\in\Z[x] \colon h_m(x)=x^2-mx+1, \, m\in \Z\}. \]
With notation as above, we have that all polynomials $h_m$ satisfy $s_1\neq 0, s_2\neq 0$ and $s_1+s_2=0$, and this implies that they satisfy the full rank condition. Thus, the Betti numbers of $M_{h_m}$ are $b_k=1$ for $0\leq k\leq 3$.

For $n=3$, set $f_{r,s}(x)= x^3-rx^2+sx-1$, for $r,s\in \Z$. It follows from \cite[Example 7.10]{AB2} that 
\[ \Delta_3'=\{ f_{r,s}\in\Z[x] \colon D(f_{r,s})>0, \, r\neq s \},  \]
where $D(f_{r,s})=r^2 s^2 - 4r^3 - 4s^3 + 18rs - 27$. Since $f_{r,s}\in \Delta_3'$, no root of $f_{r,s}$ is equal to 1. Then there are two cases: (a) one root is greater than 1 and the other two roots are in the interval $(0,1)$, or (b) two roots are greater than 1 and the other root is in $(0,1)$. In case (a), we are in the conditions of Proposition \ref{prop: tracy}$\ri$, and therefore $p$ has full rank. In case (b), the polynomial $p^*$ satisfies the conditions in Proposition \ref{prop: tracy}$\ri$, so that $p^*$ has full rank. But, this implies that $p$ has full rank as well, according to Remark \ref{rem:full rank}. That is, any polynomial $f_{r,s}$ in $\Delta_3'$ has full rank. Consequently, the Betti numbers of $M_{f_{r,s}}$ are $b_k=1$ for $k=0,1,3,4$ and $b_2=0$.
\end{example}

\begin{remark}
Let $p\in\Delta_n'$ satisfy the full rank condition. As a direct consequence of Proposition \ref{prop: betti_base_case}, the Poincaré polynomial of the solvmanifold $M_p$ of dimension $n+1$ is
\[
P_{M_p}(t)=1+t+t^n+t^{n+1}=(1+t)(1+t^n).
\]
\end{remark}

\begin{remark}
Assume that $n$ is odd, so that the dimension of $M_{p} = \Gamma \backslash G_p$ is even. Then $M_p$ does not admit any symplectic form (invariant or not) since $b_2(M_p)=0$, and this contradicts the well-known fact that any symplectic form $\omega$ in $M_p$ defines a non-trivial cohomology class in $H^2_{dR}(M_p)$.
 
Now, recall that a \textit{complex symplectic} structure on a manifold $N$ is a pair $(J,\omega_\C)$ of a complex structure $J$ on $N$ and a closed non-degenerate 2-form $\omega_\C\in \Omega^2(N,\C)$ of bidegree $(2,0)$ with respect to $J$. It follows that the real dimension of $N$ is a multiple of 4. If, in our setting, we assume $n=4m-1$ ($m\geq 1$), so that $M_p = \Gamma \backslash G_p$ has dimension $4m$, then $M_p$ does not admit any complex symplectic structure, invariant or not. Indeed, it was proved in \cite[Corollary 6.2]{BFLT} that a compact manifold admitting a complex symplectic structure has $b_2\geq 2$, but for the basic solvmanifold we have $b_2(M_p)=0$.
\end{remark}

\subsection{The quasi full rank case}

\begin{proposition} \label{prop: betti_reciprocal}
Let $p \in \Delta'_{n}$ with $n = 2m$ be a polynomial satisfying the quasi full rank condition. Then, for the associated basic $(n+1)$-dimensional solvmanifold, the Betti numbers are given by
\[
b_{2j}=b_{2j+1}=\binom{m}{j},
\qquad 0\le j\le m.
\]
\end{proposition}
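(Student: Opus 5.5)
By Hattori's isomorphism \eqref{deRham} and Lemma \ref{lemma: betti_combinatorial}, it suffices to compute the numbers $\ell_k$ of subsets $I\subset\{1,\dots,n\}$ with $|I|=k$ and $\sum_{i\in I}\lambda_i=0$. Then we use $b_k=\ell_k+\ell_{k-1}$. Since $p$ is self-reciprocal with roots $r_1,\dots,r_m,r_1^{-1},\dots,r_m^{-1}$, the logarithmic parameters are $s_1,\dots,s_m,-s_1,\dots,-s_m$. By Lemma \ref{lem:ad-conjugate} we may reorder the diagonal of $A_p$ to this arrangement without changing the Lie algebra. Moreover, $p\in\Delta'_n$ guarantees $s_i\neq 0$ for all $i$.

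The key step is to characterize the vanishing subsets. Take a subset $I$ and write it as $I=S\sqcup S'$, where $S\subset\{1,\dots,m\}$ indexes the chosen $s_i$ and $S'\subset\{1,\dots,m\}$ indexes the chosen $-s_i$. The corresponding sum is $\sum_i c_i s_i-\sum_i c'_i s_i$, where $c_i=\mathbf 1_{S}(i)$ and $c'_i=\mathbf 1_{S'}(i)$ lie in $\{0,1\}$. By the quasi full rank condition \eqref{eq: QFR}, this sum vanishes if and only if $c_i=c'_i$ for all $i$, that is, if and only if $S=S'$. The converse direction is trivial. Hence the zero-sum subsets are exactly the unions of complete pairs $\{s_i,-s_i\}$. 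In particular, every vanishing subset has even size, and for $|I|=2j$ there are exactly $\binom{m}{j}$ of them, one for each choice of $j$ pairs. This gives
\[
\ell_{2j}=\binom{m}{j},\qquad \ell_{2j+1}=0,
\]
which is consistent with the conventions $\ell_0=1$ and $\ell_n=\ell_{2m}=1$.

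Substituting into Lemma \ref{lemma: betti_combinatorial}:
\[
b_{2j}=\ell_{2j}+\ell_{2j-1}=\binom{m}{j},\qquad b_{2j+1}=\ell_{2j+1}+\ell_{2j}=\binom{m}{j},
\]
for $0\le j\le m$. Note that $b_{2m+1}=b_{n+1}=\ell_{2m}=1$ because $\ell_{n+1}=0$.

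There is no real obstacle here. The only point requiring care is justifying the translation from the multiplicative condition in Definition \ref{def: quasi_full_rank} to the additive form \eqref{eq: QFR} with $0/1$ coefficients. This follows from $s_i=\log r_i$ with $r_i>0$, so that $\sum a_i s_i=0$ holds if and only if $\prod r_i^{a_i}=1$.
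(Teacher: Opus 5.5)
Your proof is correct and follows the paper's argument. Like the paper, you reduce via Lemma~\ref{lemma: betti_combinatorial} to counting zero-sum subsets of $\{\pm s_1,\dots,\pm s_m\}$, use the quasi full rank condition to show these are exactly the unions of complete pairs (so $\ell_{2j}=\binom{m}{j}$ and $\ell_{2j+1}=0$), and then apply $b_k=\ell_k+\ell_{k-1}$; your extra remarks on reordering and on passing from the multiplicative to the additive form are harmless refinements of that same route.
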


\begin{proof}
By Lemma \ref{lemma: betti_combinatorial} (with $d = 2m$), the Betti numbers are determined by the integers $\ell_k$. Since each index in $\{1, \dots, 2m\}$ corresponds to a unique parameter in 
\[
\{s_1,-s_1,\ldots,s_m,-s_m\},
\]
(with index $2i-1$ corresponding to $s_i$ and index $2i$ to $-s_i$, for $i = 1, \dots, m$), counting index subsets of size $k$ with vanishing parameter sum is equivalent to counting subsets of parameters of size $k$ adding up to zero.

By the quasi full rank condition \eqref{eq: QFR}, such a sum vanishes if and only if it consists of complete pairs $\{s_i, -s_i\}$. It follows that $\ell_k = 0$ whenever $k$ is odd. Moreover, if $k = 2j$, a vanishing sum arises precisely by choosing $j$ complete pairs among the $m$ available 
ones. Therefore,
\[
    \ell_{2j} = \binom{m}{j}, \quad 0 \leq j \leq m.
\]
Applying \eqref{eq: betti_ell}, namely $b_k=\ell_k+\ell_{k-1}$, we obtain
\[
b_{2j}=\ell_{2j}+\ell_{2j-1}=\binom{m}{j} \qquad \text{and} \qquad b_{2j+1}=\ell_{2j+1}+\ell_{2j}=\binom{m}{j}, \qquad \text{for all} \quad 0\le j\le m. \qedhere
\]
\end{proof}

\begin{example} \label{ex:pol_reducible}
    Using Lemma \ref{thm:disjoint_extensions} or Corollary \ref{lemma: coprime} in the Appendix, we can easily construct basic solvmanifolds of any even dimension $2m$ that satisfy the hypotheses of Proposition \ref{prop: betti_reciprocal}.
    
    For $m=2$ (dimension $5$), it suffices to choose integers $r, s \ge 3$ such that $r^2-4$ and $s^2-4$ have different square-free parts. For instance, taking $r=3$ and $s=4$, we consider the polynomials $h_3(x) = x^2 - 3x + 1$ and $h_4(x) = x^2 - 4x + 1$. The explicit roots are $\frac{3 \pm \sqrt{5}}{2}$ and $2 \pm \sqrt{3}$, respectively. The resulting self-reciprocal polynomial is:
    \[
        p(x) = h_3(x)h_4(x) = x^4 - 7x^3 + 14x^2 - 7x + 1.
    \]
    This polynomial yields roots in $\Q(\sqrt{5})$ and $\Q(\sqrt{12}) \cong \Q(\sqrt{3})$. Since these quadratic fields are linearly disjoint, there are no extra relations among the roots, and the Betti numbers of the corresponding solvmanifold are completely determined: $b_0=b_1=1, b_2=b_3=2, b_4=b_5=1$.
    
    For $m=3$ (dimension $7$), we can extend this by adding a third polynomial, say $h_5(x) = x^2 - 5x + 1$, whose roots are $\frac{5 \pm \sqrt{21}}{2}$. Multiplying the three factors yields the following self-reciprocal polynomial of degree $6$:
    \[
        p(x) = h_3(x)h_4(x)h_5(x) = x^6 - 12x^5 + 50x^4 - 84x^3 + 50x^2 - 12x + 1
    \]
    The discriminants of the quadratic factors are $5, 12, 21$, whose square-free parts are $5, 3, 21$. Since no nontrivial product of these three numbers is a square, the three quadratic fields are linearly disjoint. By Proposition \ref{prop: betti_reciprocal}, the Betti numbers are given by the binomial coefficients $\binom{3}{j}$, specifically: $b_0=b_1=1$, $b_2=b_3=3$, $b_4=b_5=3$, $b_6=b_7=1$.
\end{example}

Note that the self-reciprocal polynomials appearing in Example \ref{ex:pol_reducible} are all reducible over $\Q$. We will show next that we can choose them to be irreducible over $\Q$; for this, we will extend the definition of the polynomials $h_m$ in Example \ref{ex: ejemplos basicos} by allowing $m$ to be a real number with $m>2$.

\begin{example}
    Consider $p(x) = h_r(x)h_s(x)$ with $r, s \notin \Q$ and $r>2, s>2$. Expanding, we obtain
    \[
    p(x) = x^4 - (r+s)x^3 + (rs+2)x^2 - (r+s)x + 1.
    \]
    Therefore, $p(x)$ has integer coefficients if and only if $r+s = A \in \Z$ and $rs = B \in \Z$. In this case, $r$ and $s$ are roots of the quadratic equation $y^2 - Ay + B = 0$.
    
    For example, choosing $r = 4 + \sqrt{2}$ and $s = 4 - \sqrt{2}$ yields $A=8$ and $B=14$. Since both $r$ and $s$ are greater than $2$, all four roots of the resulting self-reciprocal polynomial
    \[
    p(x) = x^4 - 8x^3 + 16x^2 - 8x + 1
    \]
    are real, positive, and distinct. Moreover, this polynomial is irreducible over $\Q$, belongs to $\Delta'_4$, and its logarithmic parameters once again partition into exactly two zero-sum pairs with no additive relations between them, yielding exactly the same Betti numbers as in the reducible case, that is, $b_0=b_1=1$, $b_2=b_3=2$, $b_4=b_5=1$.
\end{example}

\begin{remark}
Let $p\in\Delta_n'$ be a polynomial satisfying the quasi full rank condition. Then the Betti numbers described in Proposition \ref{prop: betti_reciprocal} determine the Poincaré polynomial of the associated solvmanifold $M_p$ of dimension $2n+1=4m+1$:
\[
P_{M_p}(t)=\sum_{k=0}^{2n+1} b_k t^k
      =(1+t)\sum_{j=0}^{m}\binom{m}{j} t^{2j}.
\]
Using the binomial formula, this can be written as
\[
P_{M_p}(t)=(1+t)(1+t^2)^m.
\]
\end{remark}

\section{De Rham cohomology of the complex solvmanifolds}

We now turn our attention to the complex solvmanifolds $M_p^c = \Gamma^c \backslash G_p^c$. As before, Hattori's theorem implies that $H^*_{dR}(M_p^c) \cong H^*(\g_p^c)$. 

Recall that the $(2n+2)$-dimensional Lie algebra is defined as $\g_p^c = \R e_0 \ltimes_{A_p^c} \R^{2n+1}$, where the matrix $A_p^c = 0_1 \oplus A_p^{\oplus 2}$. The presence of the $0_1$ block indicates that there is a central element $f \in \R^{2n+1}$ which commutes with $e_0$. Consequently, the Lie algebra decomposes as a direct sum $\g_p^c = \tilde{\g}_p \times \R f$, where $\tilde{\g}_p$ is a $(2n+1)$-dimensional almost abelian Lie algebra. 

The structure of $\tilde{\g}_p$ is entirely determined by the $2n$ eigenvalues of $A_p^{\oplus 2}$. If $\{s_1, \dots, s_n\}$ are the logarithmic parameters associated with the roots of $p(x)$, then each parameter appears twice. Thus, the corresponding multiset of parameters is
\[
S=\{s_1,\dots,s_n,s_1,\dots,s_n\}.
\]

To compute the Betti numbers of $\tilde{\g}_p$, we rely on Lemma \ref{lemma: betti_combinatorial}. In the present situation, $\ell_k$ counts subsets $I\subset \{1,\dots,2n\}$ of cardinality $k$ whose associated parameters have vanishing sum. Identifying the indices $i$ and $n+i$ with the first and second occurrence of $s_i$, respectively, yields a bijection between subsets $I\subset\{1,\ldots,2n\}$ of cardinality $k$ and sub-multisets of $S$ of cardinality $k$. Moreover, under this bijection, the sum of the parameters indexed by $I$ coincides with the sum of the elements of the corresponding sub-multiset of $S$. Therefore, the quantities $\ell_k$ may equivalently be computed by counting sub-multisets of $S$ of cardinality $k$ whose elements sum to zero, and we will freely use this identification throughout this section.

Again, we will consider two cases: (i) full rank polynomials; (ii) quasi full rank polynomials.



\subsection{The full rank case}

\begin{lemma} \label{lemma: l_k_full_rank}
    If the polynomial $p \in \Delta'_n$ has full rank, then the values of $\ell_k$ for the multiset $S$ are given by:
    \begin{equation*}
        \ell_k  = \begin{cases}
            1 & \text{if } k = 0 \text{ or } k = 2n, \\
            2^n & \text{if } k = n, \\
            0 & \text{otherwise}.
        \end{cases}
    \end{equation*}
\end{lemma}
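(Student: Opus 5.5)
We would encode each sub-multiset of $S$ by a vector $(k_1,\dots,k_n)\in\{0,1,2\}^n$, where $k_i$ counts how many of the two copies of $s_i$ are chosen. Under the identification with index subsets $I\subset\{1,\dots,2n\}$ described above, each such vector corresponds to exactly $\prod_{i=1}^n \binom{2}{k_i}$ subsets $I$, since for $k_i=1$ there are two choices of which copy of $s_i$ to take. The subset has cardinality $\sum_i k_i$ and parameter sum $\sum_i k_i s_i$. So $\ell_k$ is the sum of $\prod_i\binom{2}{k_i}$ over all vectors in $\{0,1,2\}^n$ with $\sum_i k_i=k$ and $\sum_i k_i s_i=0$.

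Next we would apply the full rank condition in its additive form \eqref{eq:k_i iguales}. Since the $k_i$ are integers, $\sum_i k_i s_i=0$ forces $k_1=\dots=k_n=c$ for some $c\in\{0,1,2\}$. Conversely, each of these three constant vectors gives a vanishing sum by \eqref{eq:suma}. The only zero-sum vectors are therefore $(0,\dots,0)$, $(1,\dots,1)$ and $(2,\dots,2)$, of cardinalities $0$, $n$ and $2n$.

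Finally we count the subsets of each type. The vectors $(0,\dots,0)$ and $(2,\dots,2)$ each correspond to a single subset: the empty set and the full index set, respectively. For $(1,\dots,1)$ we have $\prod_i\binom{2}{1}=2^n$, since for each $i$ we choose one of the two copies of $s_i$ independently. Because $n\ge 2$, the values $0$, $n$ and $2n$ are distinct, so no two cases overlap, and $\ell_k=0$ for every other $k$. This gives the stated values.

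There is no real obstacle here. The only point needing care is keeping multiplicities straight: $\ell_k$ counts index subsets, not distinct multisets, so the $(1,\dots,1)$ pattern must be counted $2^n$ times, once for each choice of copies.
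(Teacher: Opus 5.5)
Your proposal is correct and follows essentially the same argument as the paper: record how many copies $c_i\in\{0,1,2\}$ of each $s_i$ are chosen, use the full rank condition to force $c_1=\dots=c_n$, and count $1$, $2^n$, $1$ index subsets for $c=0,1,2$. Your explicit weighting by $\prod_i\binom{2}{k_i}$ and your check that $0$, $n$, $2n$ are distinct are only slightly more careful bookkeeping of the same proof.
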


\begin{proof}
Let $J \subset S$ be a sub-multiset of size $k$. For each index $i \in \{1, \dots, n\}$, let $c_i \in \{0, 1, 2\}$ denote the number of copies of the parameter $s_i$ present in $J$. The condition that the elements of $J$ sum to zero can be written as
\[
    \sum_{i=1}^n c_i s_i = 0.
\]

By the full rank condition \eqref{eq:k_i iguales}, this identity holds if and only if all the coefficients $c_i$ are equal to some constant $c$:
\[
    c_1 = c_2 = \dots = c_n =: c.
\]
    
Since $c_i \in \{0, 1, 2\}$, there are only three possible cases for $c$.
    
If $c=0$, then $J=\emptyset$, so $k=0$ and $\ell_0=1$. If $c=1$, exactly one copy of each $s_i$ is chosen, hence $k=n$; since $S$ contains two copies of each $s_i$, there are $2^n$ such choices, giving $\ell_n=2^n$. Finally, if $c=2$, all elements of $S$ are selected, so $k=2n$ and $\ell_{2n}=1$. Any other subset of size $k$ would require non-uniform coefficients $c_i$, which is impossible under the full rank hypothesis. Thus, $\ell_k  = 0$ for all other $k$.
\end{proof}

With the values of $\ell_k $ explicitly determined, we can easily compute the cohomology of the subalgebra $\tilde{\g}_p$.

\begin{lemma} \label{lemma: betti_tilde_g}
    Let $p\in \Delta_n'$ have full rank. The Betti numbers $b_k(\tilde{\g}_p)$ of the $(2n+1)$-dimensional Lie algebra $\tilde{\g}_p$ are given by:
    \begin{equation*}
        b_k(\tilde{\g}_p) = \begin{cases}
            1 & \text{if } k \in \{0, 1, 2n, 2n+1\}, \\
            2^n & \text{if } k \in \{n, n+1\}, \\
            0 & \text{if } 1 < k < n \text{ or } n+1 < k < 2n.
        \end{cases}
    \end{equation*}
\end{lemma}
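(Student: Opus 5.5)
The plan is to apply Lemma \ref{lemma: betti_combinatorial} directly to $\tilde{\g}_p$ and then read off the Betti numbers from Lemma \ref{lemma: l_k_full_rank}.

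First we check that Lemma \ref{lemma: betti_combinatorial} applies. The algebra $\tilde{\g}_p$ is the almost abelian Lie algebra $\R e_0\ltimes_{A_p^{\oplus 2}}\R^{2n}$. It is diagonal, since $A_p^{\oplus 2}$ is diagonal with entries $s_1,\dots,s_n,s_1,\dots,s_n$. It is unimodular, because $\tr A_p^{\oplus 2}=2\tr A_p=0$. Hence Lemma \ref{lemma: betti_combinatorial} holds with $d=2n$, giving $b_k(\tilde{\g}_p)=\ell_k+\ell_{k-1}$ for $0\le k\le 2n+1$. Here $\ell_k$ is computed for the multiset $S$, using the identification described before Lemma \ref{lemma: l_k_full_rank}.

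Next we substitute the values from Lemma \ref{lemma: l_k_full_rank}. The only nonzero $\ell_k$ are $\ell_0=1$, $\ell_n=2^n$ and $\ell_{2n}=1$, and by convention $\ell_{-1}=\ell_{2n+1}=0$. Each nonzero $\ell_j$ contributes to exactly two Betti numbers, namely $b_j$ and $b_{j+1}$. This gives the following.
\begin{itemize}
\item $\ell_0$ gives $b_0=b_1=1$.
\item $\ell_n$ gives $b_n=b_{n+1}=2^n$.
\item $\ell_{2n}$ gives $b_{2n}=b_{2n+1}=1$.
\end{itemize}
Every other $b_k$ vanishes.

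The only point needing care is that these contributions do not overlap, which requires the indices $0,1,n,n+1,2n,2n+1$ to be distinct. Since $n\ge 2$, we have $1<n$ and $n+1<2n$, so they are distinct. For $n=2$ both ranges $1<k<n$ and $n+1<k<2n$ are empty, and the statement remains consistent. There is no real obstacle here; the argument is pure bookkeeping once Lemma \ref{lemma: l_k_full_rank} is in hand.
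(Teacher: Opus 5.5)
Your proposal is correct and matches the paper's proof: it applies Lemma \ref{lemma: betti_combinatorial} with $d=2n$ and substitutes the values of $\ell_k$ from Lemma \ref{lemma: l_k_full_rank}. Your explicit checks that $\tilde{\g}_p$ is diagonal and unimodular, and that the degrees $0,1,n,n+1,2n,2n+1$ are distinct for $n\ge 2$, are details the paper leaves implicit.
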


\begin{proof} 
By Lemma \ref{lemma: betti_combinatorial},
\[
b_k(\tilde{\g}_p)=\ell_k+\ell_{k-1}
\]
for $0\leq k\leq 2n+1$. Recall that $\ell_k=0$ whenever $k<0$ or $k>2n$. Combining this with Lemma \ref{lemma: l_k_full_rank}, we obtain
\[
b_0=b_1=b_{2n}=b_{2n+1}=1,
\qquad
b_n=b_{n+1}=2^n,
\]
while $b_k=0$ for all remaining degrees.
\end{proof}

We can now compute the Betti numbers of the full $(2n+2)$-dimensional Lie algebra $\g_p^c$. Since $\g_p^c=\tilde{\g}_p \times \R f$ and $\R f$ is a one-dimensional abelian Lie algebra, its only non-trivial cohomology groups are $H^0(\R f)\cong \R$ and $H^1(\R f)\cong \R$. The Künneth formula therefore yields $H^k(\g_p^c)\cong H^k(\tilde{\g}_p)\oplus H^{k-1}(\tilde{\g}_p)$. Taking dimensions, we obtain
\begin{equation}\label{eq: kunneth_complex}
b_k(\g_p^c)=b_k(\tilde{\g}_p)+b_{k-1}(\tilde{\g}_p).
\end{equation}

The following theorem gives the Betti numbers of the complex solvmanifolds under the full rank assumption.

\begin{theorem} \label{thm: cohomology_complex}
    Let $M_p^c = \Gamma^c \backslash G_p^c$ be the $(2n+2)$-dimensional complex solvmanifold associated with a polynomial $p \in \Delta'_n$ satisfying the full rank condition. The Betti numbers $b_k = b_k(M_p^c)$ are given as follows:
    
    \begin{itemize}
        \item \textbf{General case ($n \geq 3$):}
        \begin{equation*}
            b_k = \begin{cases}
                1 & \text{if } k \in \{0, 2, 2n, 2n+2\}, \\
                2 & \text{if } k \in \{1, 2n+1\}, \\
                2^n & \text{if } k \in \{n, n+2\}, \\
                2^{n+1} & \text{if } k = n+1, \\
                0 & \text{otherwise}.
            \end{cases}
        \end{equation*}
        
        \item \textbf{Particular case ($n = 2$):}
        The Betti numbers are
        \[
        (1, 2, 5, 8, 5, 2, 1).
        \]
    \end{itemize}
\end{theorem}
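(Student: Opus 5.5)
The approach is direct: combine Hattori's isomorphism \eqref{deRham} with the Künneth relation \eqref{eq: kunneth_complex} and the Betti numbers of $\tilde{\g}_p$ computed in Lemma \ref{lemma: betti_tilde_g}. Since $G_p^c$ is completely solvable, $b_k(M_p^c)=b_k(\g_p^c)$. By \eqref{eq: kunneth_complex}, this equals $b_k(\tilde{\g}_p)+b_{k-1}(\tilde{\g}_p)$. Equivalently, the Poincaré polynomial of $M_p^c$ is $(1+t)$ times that of $\tilde{\g}_p$. From Lemma \ref{lemma: betti_tilde_g},
\[
P_{\tilde{\g}_p}(t)=(1+t)\left(1+2^n t^n+t^{2n}\right),
\]
and therefore
\[
P_{M_p^c}(t)=(1+t)^2\left(1+2^n t^n+t^{2n}\right)=(1+2t+t^2)\left(1+2^n t^n+t^{2n}\right).
\]

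Next I will expand this product and read off the coefficients. Multiplying $1+2t+t^2$ by each of the three monomials gives three blocks of coefficients:
\[
(1,2,1) \text{ in degrees } 0,1,2, \qquad (2^n,2^{n+1},2^n) \text{ in degrees } n,n+1,n+2, \qquad (1,2,1) \text{ in degrees } 2n,2n+1,2n+2.
\]

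The only real point is deciding when these blocks overlap. The first block ends in degree $2$ and the middle block starts in degree $n$, so they are disjoint exactly when $n\ge 3$. By symmetry, the middle block ends in degree $n+2$ and the last starts in degree $2n$, so they are also disjoint exactly when $n\ge 3$. In that case the coefficients are read off directly, with zeros in all other degrees, and this gives the general case of the statement.

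For $n=2$ the blocks overlap in degrees $2$ and $4$. There I will simply compute
\[
(1+t)^2(1+4t^2+t^4)=1+2t+5t^2+8t^3+5t^4+2t^5+t^6,
\]
which gives the sequence $(1,2,5,8,5,2,1)$. I expect no real obstacle beyond this bookkeeping of overlapping degrees, since everything substantive is already contained in Lemma \ref{lemma: l_k_full_rank} and Lemma \ref{lemma: betti_tilde_g}.
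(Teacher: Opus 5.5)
Your proposal is correct and follows essentially the same route as the paper: Hattori's isomorphism, then the K\"unneth relation \eqref{eq: kunneth_complex} applied to Lemma \ref{lemma: betti_tilde_g}, with the overlapping degrees treated separately when $n=2$. The difference is only in presentation. You read off every degree from the expansion of $(1+t)^2(1+2^n t^n+t^{2n})$, which is the factorization the paper records in the remark after the theorem. The paper instead computes the low and middle degrees directly and obtains the remaining ones by Poincar\'e duality.
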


\begin{proof}
Using \eqref{eq: kunneth_complex} and Lemma~\ref{lemma: betti_tilde_g}, we obtain
\[
b_0=b_0(\tilde{\g}_p)=1,\qquad
b_1=b_1(\tilde{\g}_p)+b_0(\tilde{\g}_p)=2,
\]
and
\[
b_2=b_2(\tilde{\g}_p)+b_1(\tilde{\g}_p).
\]

At this point, the case $n=2$ must be treated separately. Since $b_1(\tilde{\g}_p)=1$, if $n\geq 3$ then $b_2(\tilde{\g}_p)=0$, and hence $b_2=1$. On the other hand, when $n=2$, the degree $k = 2$ coincides with the middle degree $n$. According to Lemma \ref{lemma: betti_tilde_g}, $b_n(\tilde{\g}_p)=2^n=4$, and therefore $b_2=4+1=5$, due to the overlap between the two contributions.

For the middle degrees $k\in\{n,n+1,n+2\}$ with $n\geq 3$, we have
\[
\begin{aligned}
b_n &= b_n(\tilde{\g}_p)+b_{n-1}(\tilde{\g}_p)
     = 2^n+0
     = 2^n,\\
b_{n+1} &= b_{n+1}(\tilde{\g}_p)+b_n(\tilde{\g}_p)
         = 2^n+2^n
         = 2^{n+1},\\
b_{n+2} &= b_{n+2}(\tilde{\g}_p)+b_{n+1}(\tilde{\g}_p)
         = 0+2^n
         = 2^n.
\end{aligned}
\]

For $n=2$, we similarly obtain
\[
b_3=b_3(\tilde{\g}_p)+b_2(\tilde{\g}_p)
   =4+4
   =8.
\]

Finally, the remaining values, corresponding to $n+2<k\leq 2n+2$, follow by Poincaré duality on $M_p^c$.
\end{proof}

\begin{remark}
It follows from Theorem \ref{thm: cohomology_complex} that, under the same hypotheses and assuming $n\geq 3$, the Poincaré polynomial of the $(2n+2)$-dimensional complex solvmanifold $M_p^c$ is
\[
P_{M_p^c}(t) = 1 + 2t + t^2 + 2^n t^n + 2^{n+1} t^{n+1} + 2^n t^{n+2} + t^{2n} + 2t^{2n+1} + t^{2n+2}.
\]
Equivalently,
\[
P_{M_p^c}(t) = (1 + t)^2 (1 + 2 ^n t^n + t^{2n}).
\]
\end{remark}

\begin{remark}
    When $n=2$, the solvmanifold $M_p^c$  corresponds to the completely solvable Nakamura manifold introduced by I. Nakamura in \cite[page 90]{Na}. The Betti numbers of this manifold can be found, for instance, in \cite[Table 3.7]{Ang}, and coincide with the values obtained in Theorem \ref{thm: cohomology_complex}. In Section \ref{sec: dolbeault} we will discuss the Dolbeault cohomology of the manifolds $M_p^c$ in any dimension.
\end{remark}

\subsection{The quasi full rank case}

Let $p \in \Delta'_n$ be a self-reciprocal polynomial of even degree $n = 2m$. As mentioned previously, its roots naturally appear in reciprocal pairs
\[
\{r_1,r_1^{-1},\dots,r_m,r_m^{-1}\},
\]
so the corresponding logarithmic parameters are
\[
\{\pm s_1,\dots,\pm s_m\},
\qquad s_i=\log r_i.
\]

For the associated Lie algebra $\tilde{\g}_p$ of dimension $2n+1$, the duplicated multiset of parameters with $2n = 4m$ elements is:
\[
    S = \{s_1, s_1, -s_1, -s_1, s_2, s_2, -s_2, -s_2, \dots, s_m, s_m, -s_m, -s_m\}.
\]

To compute the Betti numbers of $\tilde{\g}_p$, we need to determine the numbers $\ell_k$. Using the convention introduced above, and following the ordering of $S$ (so that indices $4i-3, 4i-2$ correspond to the two occurrences of $s_i$ and indices $4i-1, 4i$ to those of $-s_i$), we regard $\ell_k$ as the number of sub-multisets of $S$ of cardinality $k$ whose elements sum to zero. Under the quasi full rank condition, this combinatorial problem can be solved using generating functions.

\begin{theorem} \label{thm: qfr_generating_function}
    Let $S$ be the multiset of parameters associated with a polynomial $p \in \Delta'_n$ satisfying the quasi full rank condition, with $n = 2m$. Then $\ell_k $ is given by the coefficient of $t^k$ in the following polynomial:
    \[
        L(t) = (1 + 4t^2 + t^4)^m.
    \]
\end{theorem}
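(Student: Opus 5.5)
I would reduce the count of vanishing sub-multisets to a product over the $m$ reciprocal pairs, using the quasi full rank condition \eqref{eq: QFR} to decouple the pairs. A sub-multiset $J\subset S$ (equivalently, an index subset $I\subset\{1,\dots,4m\}$ under the identification fixed before the statement) is determined by choosing, for each $i\in\{1,\dots,m\}$, a subset of the four indices $4i-3,4i-2,4i-1,4i$. Let $a_i\in\{0,1,2\}$ be the number of chosen occurrences of $s_i$ and $b_i\in\{0,1,2\}$ the number of chosen occurrences of $-s_i$. Then the sum of the elements of $J$ equals $\sum_{i=1}^m a_i s_i+\sum_{i=1}^m b_i(-s_i)$. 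By \eqref{eq: QFR}, with $c_i=a_i$ and $c'_i=b_i$, this sum vanishes if and only if $a_i=b_i$ for every $i$. So the vanishing condition is a conjunction of independent local conditions, one for each pair.

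Next I would count the local contributions with a weight recording cardinality. For a fixed $i$, the admissible choices are the following.
\begin{itemize}
\item $a_i=b_i=0$: one choice, contributing $k_i=0$.
\item $a_i=b_i=1$: $\binom21\binom21=4$ choices, contributing $k_i=2$.
\item $a_i=b_i=2$: one choice, contributing $k_i=4$.
\end{itemize}
The local generating polynomial is therefore $1+4t^2+t^4$. Since the choices for different pairs are independent and the total cardinality is $k=\sum_i k_i$, the number $\ell_k$ of vanishing sub-multisets of size $k$ is the coefficient of $t^k$ in the product $\prod_{i=1}^m(1+4t^2+t^4)=(1+4t^2+t^4)^m$. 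This is $L(t)$.

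The only delicate step is the application of \eqref{eq: QFR}. The condition is stated for arbitrary integer coefficients, and our $a_i,b_i$ lie in $\{0,1,2\}\subset\Z$, so it applies directly and rules out cross-pair cancellations. I would also note that the bijection between index subsets and choices of $(a_i,b_i)$ with multiplicities is exactly the identification described before the statement, in which distinct occurrences of the same parameter count as distinct indices. This is why the middle term carries the factor $4$ rather than $1$. As a sanity check, $L(1)=6^m$ and the top coefficient is $1$, matching $\ell_{4m}=1$.
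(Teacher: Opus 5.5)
Your proposal is correct and follows essentially the same route as the paper. Both split $S$ into the $m$ blocks $\{s_i,s_i,-s_i,-s_i\}$, use the quasi full rank condition to reduce vanishing to a blockwise condition, count $1,4,1$ zero-sum choices of sizes $0,2,4$ per block, and multiply to get $(1+4t^2+t^4)^m$. Your explicit application of \eqref{eq: QFR} with $c_i=a_i$, $c'_i=b_i$ states the decoupling step a bit more precisely than the paper does.
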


\begin{proof}
    By the quasi full rank hypothesis, there are no non-trivial linear relations involving parameters with different indices. Hence, the multiset $S$ decomposes into $m$ independent blocks
    \[
    B_i=\{s_i,s_i,-s_i,-s_i\},
    \qquad i=1,\dots,m.
    \]

    Any sub-multiset $A \subseteq S$ can be uniquely written as a disjoint union $A = \bigcup_{i=1}^m A_i$, where $A_i = A \cap B_i$. The sum of the elements in $A$ vanishes if and only if the sum of the elements in each component $A_i$ vanishes.

    Let $u_j$ denote the number of zero-sum sub-multisets of size $j$ contained in a single block $B_i$. Such a sub-multiset must contain the same number, say $l$, of copies of $s_i$ and $-s_i$. Since each sign appears twice, the only possibilities are
    \[
    u_0=\binom{2}{0}\binom{2}{0}=1,\qquad
    u_2=\binom{2}{1}\binom{2}{1}=4,\qquad
    u_4=\binom{2}{2}\binom{2}{2}=1,
    \]
    while $u_j=0$ for all other values of $j$.

    Therefore, the generating polynomial associated with a single block is
    \[
    L_{\mathrm{block}}(t) = \sum_{j=0}^4 u_j t^j = 1+4t^2+t^4.
    \]

    To construct a zero-sum sub-multiset of size $k$, we choose integers
    \[
    k_1 + k_2 \cdots + k_m = k
    \]
    and, for each block $B_i$, a valid zero-sum configuration of size $k_i$. Thus
    \[
    \ell_k = \sum_{k_1+\cdots+k_m=k} u_{k_1}\cdots u_{k_m}.
    \]

    By the definition of polynomial multiplication, this is precisely the coefficient of $t^k$ in
    \[
    L(t) = L_{\mathrm{block}}(t)^m = (1+4t^2+t^4)^m. \qedhere
    \]
\end{proof}

The cohomology of our Lie algebras can be conveniently encoded by their Poincaré polynomials, defined by
\[
P_{\g}(t)=\sum_{k\geq 0} b_k(\g)\, t^k.
\]
The generating function obtained in Theorem \ref{thm: qfr_generating_function},
\[
L(t)=(1+4t^2+t^4)^m=\sum_{k\geq 0}\ell_k t^k,
\]
allows us to derive explicit expressions for the Poincaré polynomials of both $\tilde{\g}_p$ and $\g_p^c$, and consequently for the associated complex solvmanifold $M_p^c$. Observe that $L(t)$ contains only even powers of $t$, and therefore
\[
\ell_k=0
\qquad \text{whenever } k \text{ is odd}.
\]

\begin{corollary} \label{cor: poincare_polynomials}
Let $p \in \Delta'_n$ be a polynomial satisfying the quasi full rank condition, with $n=2m$. The Poincaré polynomial of the associated complex solvmanifold $M_p^c$ is given by
\begin{equation*}
    P_{M_p^c}(t) = (1+t)^2(1 + 4t^2 + t^4)^m. \label{eq: poincare_g^c}
\end{equation*}
\end{corollary}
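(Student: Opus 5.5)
The plan is to translate the three relations between the $\ell_k$, $b_k(\tilde{\g}_p)$ and $b_k(\g_p^c)$ into identities of generating polynomials. Start from Lemma \ref{lemma: betti_combinatorial} applied to $\tilde{\g}_p=\R e_0\ltimes_{A_p^{\oplus 2}}\R^{2n}$, which is unimodular and diagonal. It gives $b_k(\tilde{\g}_p)=\ell_k+\ell_{k-1}$ for $0\le k\le 2n+1$, where $\ell_k$ counts zero-sum sub-multisets of $S$. The boundary conventions matter here: $\ell_{-1}=0$ and $\ell_{2n+1}=0$. With them, multiplying by $t^k$ and summing gives
\[
P_{\tilde{\g}_p}(t)=\sum_{k}(\ell_k+\ell_{k-1})t^k=(1+t)L(t).
\]
By Theorem \ref{thm: qfr_generating_function}, this equals $(1+t)(1+4t^2+t^4)^m$. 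As a consistency check, $L$ has degree $4m=2n$, matching $\ell_{2n}=1$ from unimodularity.

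Next, use the Künneth relation \eqref{eq: kunneth_complex}, $b_k(\g_p^c)=b_k(\tilde{\g}_p)+b_{k-1}(\tilde{\g}_p)$. In generating-function form this is $P_{\g_p^c}(t)=(1+t)P_{\tilde{\g}_p}(t)$, which is just the statement that the Poincaré polynomial of a direct product is the product of the Poincaré polynomials, with $P_{\R f}(t)=1+t$. Combining the two steps gives
\[
P_{\g_p^c}(t)=(1+t)^2(1+4t^2+t^4)^m.
\]

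Finally, I would invoke Hattori's isomorphism \eqref{deRham}. Since $G_p^c$ is completely solvable (its defining matrix $A_p^c$ is real diagonal), we have $H^*_{dR}(M_p^c)\cong H^*(\g_p^c)$, so $P_{M_p^c}=P_{\g_p^c}$ for any lattice $\Gamma^c$, and the claim follows.

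No step is a real obstacle. The only care needed is the index bookkeeping, so that the shifted sums $\sum_k \ell_{k-1}t^k=tL(t)$ hold over the full range $0\le k\le 2n+1$ (and $0\le k\le 2n+2$ for $\g_p^c$). As a sanity check, I would note that the degree of the product, $2+4m=2n+2$, equals $\dim M_p^c$. I would also check that the result is palindromic, as Poincaré duality requires, and vanishes at $t=-1$, consistent with $\chi=0$.
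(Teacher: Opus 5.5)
Your proposal is correct and follows essentially the same route as the paper. Lemma \ref{lemma: betti_combinatorial} gives $P_{\tilde{\g}_p}(t)=(1+t)L(t)$, the K\"unneth relation \eqref{eq: kunneth_complex} contributes a further factor $(1+t)$, and Theorem \ref{thm: qfr_generating_function} together with Hattori's isomorphism gives the stated formula; your boundary-index bookkeeping and sanity checks (degree, palindromicity, vanishing at $t=-1$) are accurate.
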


\begin{proof}
By Lemma \ref{lemma: betti_combinatorial}, the Betti numbers of $\tilde{\g}_p$ satisfy $b_k(\tilde{\g}_p)=\ell_k+\ell_{k-1}$. Hence, the Poincaré polynomial of $\tilde{\g}_p$ is
\[
P_{\tilde{\g}_p}(t) = (1+t)L(t).
\]

Since $\g_p^c$ is the complete complex Lie algebra associated with $\tilde{\g}_p$, the Künneth formula \eqref{eq: kunneth_complex} implies that
\[
P_{\g_p^c}(t)=(1+t)P_{\tilde{\g}_p}(t).
\]
Using this identity together with Theorem \ref{thm: qfr_generating_function}, we obtain
\[
P_{M_p^c}(t) = P_{\g_p^c}(t) = (1+t)^2(1+4t^2+t^4)^m. \qedhere
\]
\end{proof}

\begin{remark} \label{rem: l_k}
Corollary \ref{cor: poincare_polynomials} shows that the Betti numbers of the complex solvmanifold $M_p^c$ are completely determined by the generating polynomial
\[
L(t)=(1+4t^2+t^4)^m.
\]
Indeed,
\[
P_{M_p^c}(t) = (1+t)^2L(t) = (1+2t+t^2)\sum_{j\geq 0}\ell_j t^j,
\]
and therefore the coefficient of $t^k$ satisfies
\[
b_k(M_p^c)=\ell_k+2\ell_{k-1}+\ell_{k-2}.
\]
Since $\ell_j=0$ for odd $j$, it follows that
\[
b_{2k}= \ell_{2k}+\ell_{2k-2},
\qquad
b_{2k+1}=2\ell_{2k}.
\]
Although explicit formulas for the coefficients $\ell_k$ can be obtained from the expansion of the polynomial $L(t)$, the resulting expressions are not particularly enlightening.
\end{remark}

\begin{example}
    Consider a quasi full rank polynomial of degree $n = 6$ ($m = 3$). The generating polynomial is $L(t) = (1 + 4t^2 + t^4)^3$. Expanding this expression, we obtain:
    \[
    L(t) = 1 + 12t^2 + 51t^4 + 88t^6 + 51t^8 + 12t^{10} + t^{12}
    \]
    Reading the coefficients directly yields the values for $\ell_{2k}$. For instance, $\ell_4 = 51$ and $\ell_6 = 88$. Applying the formulas $b_{2k} = \ell_{2k} + \ell_{2k-2}$ and $b_{2k+1} = 2\ell_{2k}$ derived in Remark \ref{rem: l_k}, we can directly compute the Betti numbers for the 14-dimensional complex solvmanifold $M_p^c$:
    \[
    (b_0, \dots, b_{14}) = (1, 2, 13, 24, 63, 102, 139, 176, 139, 102, 63, 24, 13, 2, 1).
    \]
\end{example}

\begin{remark}
    We point out that, even though the complex solvmanifolds $M^c_p$ do not admit any Kähler metric since they are completely solvable (see \cite{Hasegawa}), their odd-indexed Betti numbers $b_{2k+1}$ are even in both the full-rank and the quasi-full-rank settings, just like the Betti numbers of a compact Kähler manifold.
\end{remark}

\section{De Rham cohomology of the hypercomplex solvmanifolds}

Let $M_p^{h} = \Gamma^{h} \backslash G_p^{h}$ be the hypercomplex solvmanifold associated to $p \in \Delta'_n$. By Hattori's theorem, we can compute its de Rham cohomology by calculating the Chevalley-Eilenberg cohomology of its Lie algebra, i.e., $H^*_{dR}(M_p^{h}) \cong H^*(\g_p^{h})$.

Recall from Section \ref{sec: construction} that the $(4n+4)$-dimensional hypercomplex Lie algebra is given by $\g_p^{h} = \R e_0 \ltimes_{A_p^{h}} \R^{4n+3}$, where the action is determined by the block diagonal matrix $A_p^{h} = 0_3 \oplus A_p^{\oplus 4}$. We can decompose the abelian ideal $\R^{4n+3}$ into a direct sum of two vector spaces:
\begin{equation}\label{eq: V_0_mas_V_1}
    \R^{4n+3} = V_0 \oplus V_1,
\end{equation}
where $V_0 = \operatorname{span}\{e_1, e_2, e_3\} \cong \R^3$ corresponds to the $0_3$ block, and $V_1 \cong \R^{4n}$ corresponds to the subspace where the action is given by the block $A_p^{\oplus 4}$.

Since $A_p^{h}|_{V_0} = 0$, the elements of $V_0$ commute with $e_0$ (i.e., $[e_0, v] = 0$ for all $v \in V_0$). Because $\R^{4n+3}$ is an abelian ideal, $V_0$ also commutes with the rest of the basis elements. This implies that $V_0$ lies in the center of $\g_p^{h}$. As a consequence, the semidirect product naturally splits into a direct sum of Lie algebras:
\begin{equation*}
    \g_p^{h} = \bar{\g}_p \times V_0 \cong \bar{\g}_p \times \R^3,
\end{equation*}
where $\bar{\g}_p = \R e_0 \ltimes_{A_p^{\oplus 4}} V_1$ is a $(4n+1)$-dimensional almost abelian Lie algebra.

Based on this direct sum decomposition, we can apply the Künneth formula to compute the cohomology of $\g_p^{h}$:
\[
    H^k(\g_p^{h}) \cong \bigoplus_{j=0}^k \left( H^{k-j}(\bar{\g}_p) \otimes H^j(\R^3) \right).
\]

Since $\R^3$ is a 3-dimensional abelian Lie algebra, its Betti numbers are simply given by the binomial coefficients $b_j(\R^3) = \binom{3}{j}$. Thus, the nonzero Betti numbers for the $\R^3$ factor are $b_0=1$, $b_1=3$, $b_2=3$, and $b_3=1$. This allows us to express the Betti numbers of the full Lie algebra explicitly in terms of the Betti numbers of the subalgebra $\bar{\g}_p$:
\begin{equation} \label{eq: kunneth_hypercomplex}
    b_k(\g_p^{h}) = b_k(\bar{\g}_p) + 3 b_{k-1}(\bar{\g}_p) + 3 b_{k-2}(\bar{\g}_p) + b_{k-3}(\bar{\g}_p).
\end{equation}

To make use of this formula, we must first compute $b_k(\bar{\g}_p)$. The structure of $\bar{\g}_p$ is entirely determined by the eigenvalues of $A_p^{\oplus 4}$. Letting $\{s_1, \dots, s_n\}$ be the logarithmic parameters associated with $p \in \Delta_n'$, the multiset of eigenvalues $S$ for this action consists of exactly four copies of each parameter:

\[
S= \{s_1,\dots,s_n,\, s_1,\dots,s_n,\, s_1,\dots,s_n,\, s_1,\dots,s_n\}.
\]


The total size of the multiset $S$ is $4n$. 
As in the previous section, to apply Lemma \ref{lemma: betti_combinatorial} we identify the indices $i,\; n+i,\; 2n+i,\; 3n+i$ with the four occurrences of $s_i$, respectively. In this way, subsets $I\subset\{1,\ldots,4n\}$ of cardinality $k$ correspond bijectively to sub-multisets of $S$ of cardinality $k$, preserving the associated sums. Hence, $\ell_k$ may be computed by counting sub-multisets of $S$ whose elements sum to zero. In what follows, we will freely use this interpretation of the quantities $\ell_k$.

\subsection{The full rank condition}

We assume here that the polynomial $p$ satisfies the full rank condition. The numbers $\ell_k$ are explicitly determined as follows:

\begin{lemma} \label{lemma: l_k_full_rank_hyp}
    If $p \in \Delta'_n$ satisfies the full rank condition, then the values of $\ell_k $ associated with the multiset $S$ are given by:

    \begin{equation*}
        \ell_k  = \begin{cases}
            1 & \text{if } k = 0 \text{ or } k = 4n, \\
            4^n & \text{if } k = n \text{ or } k = 3n, \\
            6^n & \text{if } k = 2n, \\
            0 & \text{otherwise}.
        \end{cases}
    \end{equation*}
\end{lemma}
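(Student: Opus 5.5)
The argument follows the proof of Lemma \ref{lemma: l_k_full_rank}, with the multiplicity of each parameter raised from two to four. We use the identification fixed above, under which $\ell_k$ counts the subsets $I\subset\{1,\dots,4n\}$ of size $k$ whose parameters sum to zero. To such a subset $I$ we attach, for each $i\in\{1,\dots,n\}$, the number $c_i\in\{0,1,2,3,4\}$ of indices among $i, n+i, 2n+i, 3n+i$ that lie in $I$. Then
\[
\sum_{j\in I} (\text{parameter of } j) = \sum_{i=1}^n c_i s_i \qquad\text{and}\qquad |I| = \sum_{i=1}^n c_i .
\]

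The first step is to invoke the full rank condition \eqref{eq:k_i iguales}. It says that $\sum_i c_i s_i=0$ holds if and only if $c_1=\cdots=c_n=:c$. So the vanishing-sum subsets are exactly those whose multiplicity vector is constant, equal to some $c\in\{0,1,2,3,4\}$. Such a subset has cardinality $k=cn$. It follows at once that $\ell_k=0$ unless $k\in\{0,n,2n,3n,4n\}$.

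The second step counts, for each value of $c$, the subsets with $c_i=c$ for every $i$. For each $i$ we independently choose $c$ of the four occurrences of $s_i$, which gives $\binom{4}{c}^n$ subsets. The values $c=0,1,2,3,4$ then give $1$, $4^n$, $6^n$, $4^n$ and $1$ subsets respectively, placed in degrees $k=0,n,2n,3n,4n$. Since $n\ge 2$, these five degrees are pairwise distinct, so no two contributions land in the same $\ell_k$. This reproduces the stated values.

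There is no real obstacle. The one point that needs care is the bookkeeping between genuine index subsets and sub-multisets: we must count choices of \emph{which} copies of $s_i$ are taken, giving the factor $\binom{4}{c}$ for each $i$, rather than counting only multiplicity vectors. Equivalently, $\ell_k$ is the coefficient of $t^k$ in $\sum_{c=0}^4\binom{4}{c}^n t^{cn}$.
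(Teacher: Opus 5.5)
Your proof is correct and follows essentially the same route as the paper. You use the full rank condition to force a constant multiplicity vector $c_1=\cdots=c_n=c$, hence $k=cn$, and then count $\binom{4}{c}^n$ choices of copies. Your explicit remark that the five degrees $0,n,2n,3n,4n$ are distinct because $n\ge 2$ is a minor point the paper leaves implicit.
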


\begin{proof}
Consider a sub-multiset $J \subset S$ of size $k$ whose elements sum to zero. For each $i \in \{1, \dots, n\}$, let $c_i \in \{0,1,2,3,4\}$ denote the number of copies of the parameter $s_i$ contained in $J$. The zero-sum condition can then be written as
\[
\sum_{i=1}^n c_i s_i = 0.
\]

By the full rank condition \eqref{eq:k_i iguales}, this identity holds if and only if all coefficients are equal, i.e.
\[
c_1 = c_2 = \cdots = c_n =: c, \qquad c \in \{0,1,2,3,4\}.
\]

Consequently, any valid sub-multiset must contain exactly $c$ copies of every parameter, implying that its total size must be of the form $k = c n$. For any other subset size $k \notin \{0, n, 2n, 3n, 4n\}$, the uniform coefficient condition cannot be met, which immediately yields $\ell_k = 0$.

When $k = c n$, the number of ways to choose $c$ copies from the four available for each of the $n$ parameters is given by the product of binomial coefficients $\binom{4}{c}^n$. Evaluating this expression for each possible value of $c$ directly produces 
\[
\ell_0 = 1, \qquad \ell_n = 4^n, \qquad \ell_{2n} = 6^n, \qquad \ell_{3n} = 4^n, \qquad \ell_{4n} = 1. \qedhere
\]
\end{proof}

Using these values, we can immediately compute the Betti numbers of the subalgebra $\bar{\g}_p$.

\begin{lemma} \label{lemma: betti_tilde_g_hyp}
    Let $p \in \Delta'_n$ be a polynomial satisfying the full rank condition. The Betti numbers $b_k(\bar{\g}_p)$ of the $(4n+1)$-dimensional Lie algebra $\bar{\g}_p$ are given by:
    
    \begin{equation*}
        b_k(\bar{\g}_p) = \begin{cases}
            1 & \text{if } k \in \{0, 1, 4n, 4n+1\}, \\
            4^n & \text{if } k \in \{n, n+1, 3n, 3n+1\}, \\
            6^n & \text{if } k \in \{2n, 2n+1\}, \\
            0 & \text{otherwise}.
        \end{cases}
    \end{equation*}
\end{lemma}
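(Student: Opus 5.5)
The plan is to apply Lemma \ref{lemma: betti_combinatorial} directly to $\bar{\g}_p = \R e_0 \ltimes_{A_p^{\oplus 4}} V_1$. This Lie algebra is diagonal and unimodular, since $\tr A_p^{\oplus 4} = 4\tr A_p = 0$, and it has $d = 4n$. Hence
\[
b_k(\bar{\g}_p) = \ell_k + \ell_{k-1}, \qquad 0 \le k \le 4n+1,
\]
where the $\ell_k$ are exactly the quantities computed in Lemma \ref{lemma: l_k_full_rank_hyp}. We also use the conventions $\ell_{-1} = 0$ and $\ell_{4n+1} = 0$.

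By Lemma \ref{lemma: l_k_full_rank_hyp}, the only nonzero $\ell_j$ occur at $j \in \{0, n, 2n, 3n, 4n\}$, with values $1, 4^n, 6^n, 4^n, 1$. Each such index $j$ contributes the value $\ell_j$ to exactly two Betti numbers, namely $b_j$ and $b_{j+1}$. This gives
\begin{itemize}
\item $b_0 = b_1 = 1$,
\item $b_n = b_{n+1} = 4^n$,
\item $b_{2n} = b_{2n+1} = 6^n$,
\item $b_{3n} = b_{3n+1} = 4^n$,
\item $b_{4n} = b_{4n+1} = 1$,
\end{itemize}
and $b_k = 0$ in every other degree, because then both $\ell_k$ and $\ell_{k-1}$ vanish.

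The only point needing care is whether two of these contributions can overlap, i.e.\ whether $j+1 = j'$ for two distinct indices $j, j'$ in the set above. Consecutive indices differ by $n$, so an overlap would require $n = 1$. Since $n \ge 2$, the pairs $\{cn, cn+1\}$ for $c = 0, \dots, 4$ are pairwise disjoint. This differs from the complex case in Theorem \ref{thm: cohomology_complex}, where the case $n = 2$ needed separate treatment; that issue only reappears after the Künneth step \eqref{eq: kunneth_hypercomplex}, not here. Therefore the values listed in the statement are all correct with no exceptional cases, and the argument is essentially immediate.

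As a consistency check, the resulting sequence is symmetric under $k \mapsto 4n+1-k$, as Poincaré duality requires. The Poincaré polynomial is $(1+t)\sum_k \ell_k t^k$, which vanishes at $t = -1$, in agreement with the Euler characteristic being zero.
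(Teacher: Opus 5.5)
Your proposal is correct and follows the paper's argument exactly: apply $b_k=\ell_k+\ell_{k-1}$ from Lemma~\ref{lemma: betti_combinatorial} with $d=4n$, then substitute the values $\ell_{cn}=\binom{4}{c}^n$ from Lemma~\ref{lemma: l_k_full_rank_hyp}. Your explicit checks of unimodularity and of the non-overlap of the pairs $\{cn,cn+1\}$ (which needs only $n\ge 2$) are details the paper leaves implicit.
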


\begin{proof}
As established previously, the Betti numbers of the almost abelian Lie algebra $\bar{\g}_p$ are given by the relation \eqref{eq: betti_ell}: $b_k(\bar{\g}_p) = \ell_k  + \ell_{k-1}$. By Lemma \ref{lemma: l_k_full_rank_hyp}, the coefficient $\ell_j$ is nonzero if and only if $j = c n$ for some constant $c \in \{0, 1, 2, 3, 4\}$. Consequently, the sum $\ell_k  + \ell_{k-1}$ can only be nonzero when the degree $k$ takes the form $c n$ or $c n + 1$. Substituting the corresponding values $\ell_{cn} = \binom{4}{c}^n$ directly into the formula yields the stated Betti numbers for these specific degrees. For all other intermediate degrees, both terms in the sum vanish, resulting in trivial cohomology.
\end{proof}

\begin{theorem} \label{thm: cohomology_hyp}
    Let $p \in \Delta'_n$ be a polynomial of degree $n \ge 5$ satisfying the full rank condition. The Betti numbers $b_k = b_k(M_p^{h})$ of the associated $(4n+4)$-dimensional hypercomplex solvmanifold $M_p^{h}$ are given by:
    \begin{equation*}
        b_k = \begin{cases}
            \binom{4}{k - c n} \binom{4}{c}^n & \text{if } c n \le k \le c n + 4 \text{ for some } c \in \{0, 1, 2, 3, 4\}, \\
            0 & \text{otherwise}.
        \end{cases}
    \end{equation*}
\end{theorem}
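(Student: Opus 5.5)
Combining Hattori's isomorphism with the Künneth formula \eqref{eq: kunneth_hypercomplex} and Lemma \ref{lemma: betti_tilde_g_hyp}, we work at the level of Poincaré polynomials. By Lemma \ref{lemma: betti_combinatorial} we have $P_{\bar{\g}_p}(t)=(1+t)\sum_k \ell_k t^k$. By Lemma \ref{lemma: l_k_full_rank_hyp},
\[
\sum_k \ell_k t^k=\sum_{c=0}^{4}\binom{4}{c}^n t^{cn}.
\]
Formula \eqref{eq: kunneth_hypercomplex} says exactly that $P_{\g_p^h}(t)=(1+t)^3P_{\bar{\g}_p}(t)$. Hence
\[
P_{M_p^h}(t)=(1+t)^4\sum_{c=0}^{4}\binom{4}{c}^n t^{cn}=\sum_{c=0}^{4}\sum_{j=0}^{4}\binom{4}{j}\binom{4}{c}^n t^{cn+j}.
\]
This identity holds for every $n\ge 2$ with $p$ of full rank. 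The claimed formula for $b_k$ is the statement that, for each $k$, at most one pair $(c,j)$ with $0\le j\le 4$ satisfies $cn+j=k$. In that case $b_k=\binom{4}{k-cn}\binom{4}{c}^n$, and $b_k=0$ if no such pair exists.

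The only real point, and the place where the hypothesis $n\ge 5$ enters, is this uniqueness. We will show that the windows $[cn,cn+4]$ for $c=0,\dots,4$ are pairwise disjoint. Since consecutive windows start $n$ apart and each has length $4$, they are disjoint precisely when $cn+4<(c+1)n$, that is, when $n>4$. For $n\ge 5$, any $k$ therefore lies in at most one window, so the coefficient of $t^k$ is a single product, giving the displayed formula. We will add a remark that for $2\le n\le 4$ the windows overlap and $b_k$ becomes a sum of such products, as in the $n=2$ case of Theorem \ref{thm: cohomology_complex}; that case is excluded from the statement.

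As a consistency check, the formula is symmetric under $k\mapsto 4n+4-k$, since this sends $(c,j)\mapsto(4-c,4-j)$. This matches Poincaré duality in dimension $4n+4$. Alternatively, one can argue degree by degree directly from \eqref{eq: kunneth_hypercomplex} and Lemma \ref{lemma: betti_tilde_g_hyp}. There $b_k(\bar{\g}_p)$ is nonzero only in degrees $cn$ and $cn+1$, both with value $\binom{4}{c}^n$. Convolving with $(1,3,3,1)$ gives $\binom{4}{c}^n(1,4,6,4,1)$ on the window $[cn,cn+4]$, and for $n\ge5$ these windows do not interact. No step is a serious obstacle; the only care needed is the bookkeeping of the window disjointness.
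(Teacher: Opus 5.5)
Your proposal is correct and takes essentially the same route as the paper. The paper convolves the Betti numbers of $\bar{\g}_p$ (nonzero only in degrees $cn$ and $cn+1$) with those of $\R^3$ degree by degree via \eqref{eq: kunneth_hypercomplex}, and uses $n\ge 5$ to make the windows $[cn,cn+4]$ disjoint, which is exactly your alternative argument; your packaging as $(1+t)^4\sum_{c}\binom{4}{c}^n t^{cn}$ is the same computation, and the paper records that polynomial right after the theorem.
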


\begin{proof}
    Recall that the Betti numbers of the algebra $\g_p^h$ are given by \eqref{eq: kunneth_hypercomplex}, that is:
    \[
        b_k(\g_p^{h}) = b_k(\bar{\g}_p) + 3b_{k-1}(\bar{\g}_p) + 3b_{k-2}(\bar{\g}_p) + b_{k-3}(\bar{\g}_p).
    \]
    
    From Lemma \ref{lemma: betti_tilde_g_hyp}, the nonzero Betti numbers of $\bar{\g}_p$ only occur at degrees $cn$ and $cn+1$, taking the value $\binom{4}{c}^n$ for each $c \in \{0, 1, 2, 3, 4\}$. Since $n \ge 5$, the five intervals $[cn, cn+4]$ are disjoint, so there is no overlap of nonzero terms in \eqref{eq: kunneth_hypercomplex} between different values of $c$.

    Fixing a constant $c \in \{0, 1, 2, 3, 4\}$, we compute the Betti numbers for $k = cn+i$ with $i \in \{0, 1, 2, 3, 4\}$ by evaluating the sum directly:
    \begin{align*}
        b_{cn} & = b_{cn}(\bar{\g}_p) = \binom{4}{c}^n = \binom{4}{0} \binom{4}{c}^n, \\
        b_{cn+1} & = b_{cn+1}(\bar{\g}_p) + 3b_{cn}(\bar{\g}_p) = 4\binom{4}{c}^n = \binom{4}{1} \binom{4}{c}^n, \\
        b_{cn+2} & = 3b_{cn+1}(\bar{\g}_p) + 3b_{cn}(\bar{\g}_p) = 6\binom{4}{c}^n = \binom{4}{2} \binom{4}{c}^n, \\
        b_{cn+3} & = 3b_{cn+1}(\bar{\g}_p) + b_{cn}(\bar{\g}_p) = 4\binom{4}{c}^n = \binom{4}{3} \binom{4}{c}^n, \\
        b_{cn+4} & = b_{cn+1}(\bar{\g}_p) = \binom{4}{c}^n = \binom{4}{4} \binom{4}{c}^n
    \end{align*}

    This shows that 
    \[
    b_{cn+i}(\g_p^{h}) = \binom{4}{i}\binom{4}{c}^n, \qquad \forall i \in \{0, 1, 2, 3, 4\}.
    \]
    Substituting $i = k - c n$ produces the expression in the theorem. Since $b_k(\bar{\g}_p) = 0$ outside these intervals, the Künneth formula then implies that all other Betti numbers vanish.
\end{proof}

\begin{remark}
    For practical reference, evaluating the closed-form expression from Theorem \ref{thm: cohomology_hyp} for each $c \in \{0, 1, 2, 3, 4\}$ yields the following explicit values for the nonzero Betti numbers of $M_p^{h}$:
    \begin{equation*}
        b_k = \begin{cases}
            1 & \text{if } k \in \{0, 4, 4n, 4n+4\}, \\
            4 & \text{if } k \in \{1, 3, 4n+1, 4n+3\}, \\
            6 & \text{if } k \in \{2, 4n+2\}, \\
            4^n & \text{if } k \in \{n, n+4, 3n, 3n+4\}, \\
            4^{n+1} & \text{if } k \in \{n+1, n+3, 3n+1, 3n+3\}, \\
            6 \cdot 4^n & \text{if } k \in \{n+2, 3n+2\}, \\
            6^n & \text{if } k \in \{2n, 2n+4\}, \\
            4 \cdot 6^n & \text{if } k \in \{2n+1, 2n+3\}, \\
            6^{n+1} & \text{if } k = 2n+2, \\
            0 & \text{otherwise}.
        \end{cases}
    \end{equation*}
\end{remark}

Since Theorem \ref{thm: cohomology_hyp} is explicitly stated for $n \geq 5$ to prevent overlap between the nonzero Betti degree intervals, we now provide the complete computations for the lower-dimensional cases $n=2$, $n=3$ and $n=4$. This concludes the cohomological characterization of the hypercomplex solvmanifolds under the full rank assumption.

\begin{example}
By Lemma \ref{lemma: betti_tilde_g_hyp}, the non-zero Betti numbers of the subalgebra $\bar{\g}_p$ occur in consecutive pairs
\[
b_{cn}(\bar{\g}_p) = b_{cn+1}(\bar{\g}_p) = \binom{4}{c}^n,
\quad c \in \{0, \dots, 4\}.
\]
Using \eqref{eq: kunneth_hypercomplex}, we obtain the following Betti sequences.

\medskip
\noindent
\begin{align*}
n=2 \ (\dim M_p^h = 12) \quad & (1, 4, 22, 68, 133, 208, 248, 208, 133, 68, 22, 4, 1), \\
n=3 \ (\dim M_p^h = 16) \quad & (1, 4, 6, 68, 257, 384, 472, 928, 1296, 928, 472, 384, 257, 68, 6, 4, 1), \\
n=4 \ (\dim M_p^h = 20) \quad & (1, 4, 6, 4, 257, 1024, 1536, 1024, 1552, 5184, 7776, 5184, 1552, \\
& 1024, 1536, 1024, 257, 4, 6, 4, 1).
\end{align*}
\end{example}

As in the complex case, we can compactly encode the Betti numbers of the hypercomplex solvmanifolds using Poincaré polynomials. 
The relation \eqref{eq: betti_ell}, that is, $b_k(\bar{\g}_p) = \ell_k  + \ell_{k-1}$, allows us to factor the polynomial as follows: 
\begin{equation*}
    P_{\bar{\g}_p}(t) = (1+t) \sum_{k=0}^{4n} \ell_k  t^k.
\end{equation*}
Using the nonzero values of $\ell_k $ obtained in Lemma \ref{lemma: l_k_full_rank_hyp} for the full rank case, we get:
\begin{equation*}
    P_{\bar{\g}_p}(t) = (1+t) \left( 1 + 4^n t^n + 6^n t^{2n} + 4^n t^{3n} + t^{4n} \right).
\end{equation*}

To find the Poincaré polynomial for the full hypercomplex Lie algebra $\g_p^{h}$, we use the direct sum decomposition $\g_p^{h} \cong \bar{\g}_p \times \R^3$. In terms of generating polynomials, the Künneth formula translates to a simple multiplication by the Poincaré polynomial of the central ideal. Since the Poincaré polynomial of $\R^3$ is $(1+t)^3$, we have:
\begin{equation*}
    P_{\g_p^{h}}(t) = P_{\bar{\g}_p}(t) \cdot (1+t)^3.
\end{equation*}
Therefore, the Betti numbers of the hypercomplex solvmanifold $M_p^h$ under the full rank condition are generated by the following polynomial:
\begin{equation*}\label{eq: poincare-hyp}
    P_{M_p^{h}}(t) = (1+t)^4 \left( 1 + 4^n t^n + 6^n t^{2n} + 4^n t^{3n} + t^{4n} \right).
\end{equation*}

\subsection{The quasi full rank condition}
Let $p \in \Delta'_n$ be a self-reciprocal polynomial of even degree $n = 2m$. Recall that its logarithmic parameters occur in opposite pairs, which we denote by 
\[ \{\pm s_1, \pm s_2, \dots, \pm s_m\}. 
\]

Consider the hypercomplex Lie algebra $\g_p^{h}$ of dimension $4n+4 = 8m+4$. The action on the abelian ideal $\R^{4n+3} = V_0 \oplus V_1$ (see \eqref{eq: V_0_mas_V_1}) is determined by the matrix $A_p^{h}$, which contains four copies of each parameter. More precisely, the diagonal action on $V_1 \cong \R^{4n}$ is given, in a suitable basis, by 
\[ \bigoplus_{i=1}^m \operatorname{diag}(s_i, s_i, s_i, s_i, -s_i, -s_i, -s_i, -s_i). 
\] 

Accordingly, the associated multiset of parameters $S$ for the subalgebra $\bar{\g}_p$, of size $4n = 8m$, contains four copies of each parameter. Reordering its elements and grouping together the parameters corresponding to each pair $\{\pm s_i\}$, we may write
\[
S = \{\underbrace{s_1,\dots,s_1}_{4},
      \underbrace{-s_1,\dots,-s_1}_{4},
      \dots,
      \underbrace{s_m,\dots,s_m}_{4},
      \underbrace{-s_m,\dots,-s_m}_{4}\}.
\]
For each $i \in \{1,\dots,m\}$, let
\[
B_i=\{s_i,s_i,s_i,s_i,-s_i,-s_i,-s_i,-s_i\}
\]
denote the corresponding block of parameters. Then
\[
S=\bigsqcup_{i=1}^{m} B_i.
\]

Recall that, as explained above, we identify subsets of indices in
$\{1,\ldots,4n\}$ with the corresponding sub-multisets of $S$ following 
the ordering above (so that indices $8i-7, 8i-6, 8i-5, 8i-4$ correspond to the four occurrences of $s_i$ and indices $8i-3, 8i-2, 8i-1, 8i$ to those of $-s_i$). Under this identification, the quantities $\ell_k$ appearing in 
Lemma \ref{lemma: betti_combinatorial} may be computed by counting 
sub-multisets of $S$ of cardinality $k$ whose elements sum to zero.

Assume now that $p$ satisfies the quasi full rank condition. By definition, the only vanishing linear relations among the parameters are the trivial cancellations within each pair $\pm s_i$, namely, 
\[ 
s_i + (-s_i)=0, 
\] 
and no relations involving distinct indices occur. Consequently, the problem of computing the numbers $\ell_k$ reduces to an independent combinatorial analysis on each block $B_i$. 

A zero-sum sub-multiset of $B_i$ is obtained precisely by selecting the same number $l$ of copies of $s_i$ and $-s_i$. Since each sign appears with multiplicity four, we have $l \in \{0,1,2,3,4\}$. The number of such choices is 
\[ 
\binom{4}{l}\binom{4}{l} = \binom{4}{l}^{2}, 
\] 
and the resulting sub-multiset has cardinality $k=2l$. Therefore, the only nonzero values of $\ell_k$ for a single block occur for
\[ 
k\in\{0,2,4,6,8\}, 
\] 
with 
\[ 
\ell_0 = 1, \qquad
\ell_2 = 16, \qquad
\ell_4= 36, \qquad
\ell_6 = 16, \qquad
\ell_8 = 1. 
\] 

Equivalently, these coefficients are encoded in the generating polynomial for a single block:
\[ 
L_{\operatorname{block}}(t) = 1 + 16t^2 + 36t^4 + 16t^6 + t^8. 
\]

Since $S$ is the disjoint union of $m$ independent blocks, we obtain the following result, which is completely analogous to Theorem \ref{thm: qfr_generating_function} for the complex case.

\begin{theorem} \label{thm: qfr_generating_function_hyp}
Let $S$ be the multiset of parameters associated with a quasi full rank polynomial $p \in \Delta'_n$, with $n = 2m$ in the hypercomplex setting. Then $\ell_k $ is exactly the coefficient of $t^k$ in the expansion of the generating polynomial:
\[
L(t) = [L_{\operatorname{block}}(t)]^m = (1 + 16t^2 + 36t^4 + 16t^6 + t^8)^m.
\]
\end{theorem}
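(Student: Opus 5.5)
The plan follows the proof of Theorem \ref{thm: qfr_generating_function} verbatim, with the blocks $B_i=\{s_i,s_i,s_i,s_i,-s_i,-s_i,-s_i,-s_i\}$ in place of the four-element blocks of the complex case.

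First, fix a sub-multiset $A\subseteq S$ of cardinality $k$ and, for each $i$, let $a_i,a'_i\in\{0,\dots,4\}$ be the number of copies of $s_i$ and of $-s_i$ in $A$. The sum of the elements of $A$ is
\[
\sum_{i=1}^m a_i s_i + \sum_{i=1}^m a'_i(-s_i).
\]
By the quasi full rank condition \eqref{eq: QFR}, this vanishes if and only if $a_i=a'_i$ for every $i$. This is the key reduction, and it is the only place where the arithmetic hypothesis enters. Consequently $A$ has zero sum if and only if each piece $A_i=A\cap B_i$ has zero sum. Moreover, the decomposition $A=\bigsqcup_i A_i$ is a bijection between sub-multisets of $S$ and $m$-tuples of sub-multisets of the blocks. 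Here sub-multisets are understood via the index identification fixed just before the statement, so the copies are distinguishable and the count is of index subsets.

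Next, I count zero-sum pieces within a single block. A zero-sum $A_i$ contains $l$ copies of $s_i$ and $l$ copies of $-s_i$ for some $l\in\{0,\dots,4\}$. There are $\binom{4}{l}^2$ ways to choose these among the four indexed occurrences of each sign, and the resulting piece has size $2l$. This yields $u_0=1$, $u_2=16$, $u_4=36$, $u_6=16$, $u_8=1$, and $u_j=0$ otherwise. In other words, the single-block generating polynomial is $L_{\operatorname{block}}(t)=\sum_l \binom{4}{l}^2t^{2l}=1+16t^2+36t^4+16t^6+t^8$, as computed in the discussion above.

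Finally, since the zero-sum condition is blockwise and sizes add, I obtain
\[
\ell_k=\sum_{k_1+\cdots+k_m=k} u_{k_1}\cdots u_{k_m}.
\]
This is exactly the coefficient of $t^k$ in $L_{\operatorname{block}}(t)^m$ by the definition of polynomial multiplication, which proves the statement.

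I do not expect a real obstacle. The only point needing care is the first step: justifying that the quasi full rank condition, stated for integer coefficients, applies here. It does, because the multiplicities $a_i,a'_i$ are nonnegative integers, so no additional cross-block cancellations can occur.
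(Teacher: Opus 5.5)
Your proposal is correct and follows essentially the same route as the paper. You use the quasi full rank condition \eqref{eq: QFR} to reduce zero-sum sub-multisets to blockwise ones with equal numbers $l$ of copies of $s_i$ and $-s_i$, count $\binom{4}{l}^2$ choices per block to obtain $L_{\operatorname{block}}(t)$, and multiply over the $m$ independent blocks; your explicit multiplicities $a_i,a'_i$ just make the paper's appeal to \eqref{eq: QFR} slightly more transparent.
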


\begin{remark} 
Since the generating polynomial $L(t)$ contains only even powers of $t$, it follows immediately that $\ell_k = 0$ whenever $k$ is odd. 
\end{remark}

\medskip

In analogy with the complex case, the Poincaré polynomials of the corresponding Lie algebras are readily obtained from Theorem \ref{thm: qfr_generating_function_hyp}.

\begin{corollary} \label{cor: poincare_polynomials_hyp}
Let $p \in \Delta'_n$ be a polynomial of degree $n=2m$ satisfying the quasi full rank condition. The Poincaré polynomials for the associated hypercomplex solvmanifold $M_p^h$ is given by:
    \begin{equation*}
        P_{M_p^{h}}(t) = (1+t)^4 L(t),
    \end{equation*}
with $L(t)$ as in Theorem \ref{thm: qfr_generating_function_hyp}.
\end{corollary}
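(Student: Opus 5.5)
The argument mirrors the proof of Corollary~\ref{cor: poincare_polynomials} in the complex case. It has three steps.

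First, pass from the solvmanifold to its Lie algebra. The group $G_p^h$ is completely solvable, so Hattori's isomorphism \eqref{deRham} gives $P_{M_p^h}(t)=P_{\g_p^h}(t)$. This also shows the result does not depend on the lattice $\Gamma^h$.

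Second, compute the Poincaré polynomial of the subalgebra $\bar{\g}_p=\R e_0\ltimes_{A_p^{\oplus 4}}V_1$.
\begin{itemize}
\item It is a diagonal almost abelian Lie algebra, and it is unimodular because $\tr A_p=0$. So Lemma~\ref{lemma: betti_combinatorial} applies, with $d=4n$, and gives $b_k(\bar{\g}_p)=\ell_k+\ell_{k-1}$.
\item Multiplying by $t^k$ and summing over $0\le k\le 4n+1$ (using $\ell_j=0$ for $j<0$ or $j>4n$), this reads
\[
P_{\bar{\g}_p}(t)=(1+t)\sum_{k\ge 0}\ell_k t^k .
\]
\item Here the $\ell_k$ count zero-sum sub-multisets of $S$, via the index identification fixed before Theorem~\ref{thm: qfr_generating_function_hyp}.
\item Theorem~\ref{thm: qfr_generating_function_hyp} identifies $\sum_k \ell_k t^k$ with $L(t)=(1+16t^2+36t^4+16t^6+t^8)^m$, so $P_{\bar{\g}_p}(t)=(1+t)L(t)$.
\end{itemize}

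Third, add back the center. Since $V_0$ is central, $\g_p^h\cong\bar{\g}_p\times\R^3$. Formula \eqref{eq: kunneth_hypercomplex} comes from the Künneth formula, and in generating-function form it says the Poincaré polynomial of $\g_p^h$ is that of $\bar{\g}_p$ multiplied by $P_{\R^3}(t)=(1+t)^3$. Combining the three steps,
\[
P_{M_p^h}(t)=(1+t)^3(1+t)L(t)=(1+t)^4L(t).
\]

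The only point needing care is the passage from \eqref{eq: betti_ell} to the factorization $(1+t)L(t)$. One must check that the index bijection between subsets of $\{1,\dots,4n\}$ and sub-multisets of $S$ preserves both cardinality and sum. Then the $\ell_k$ of Lemma~\ref{lemma: betti_combinatorial} are exactly the coefficients computed in Theorem~\ref{thm: qfr_generating_function_hyp}. This was already arranged in the text, so no real obstacle remains.
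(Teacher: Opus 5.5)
Your proposal is correct and follows essentially the same route as the paper: Hattori's isomorphism, then Lemma~\ref{lemma: betti_combinatorial} on $\bar{\g}_p$, which together with Theorem~\ref{thm: qfr_generating_function_hyp} gives $P_{\bar{\g}_p}(t)=(1+t)L(t)$, and finally the K\"unneth factor $(1+t)^3$ from the central $\R^3$. Your checks of the unimodularity of $\bar{\g}_p$ and of the index bijection preserving both cardinality and sum only make explicit what the paper uses implicitly.
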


\begin{proof}
By Lemma \ref{lemma: betti_combinatorial}, the Betti numbers of the almost abelian subalgebra $\bar{\g}_p$ satisfy $b_k(\bar{\g}_p)=\ell_k+\ell_{k-1}$.
Therefore,
\[
P_{\bar{\g}_p}(t)=(1+t)L(t).
\]

Now, by the Künneth formula \eqref{eq: kunneth_hypercomplex}, we have
\[
P_{M_p^h} = P_{\g_p^{h}}(t) = (1+t)^3 P_{\bar{\g}_p}(t) = (1+t)^4L(t),
\]
where $(1+t)^3$ is the Poincaré polynomial associated with the central factor $\R^3$.
\end{proof}

\begin{remark}\label{rem: l_k hyper}
The previous corollary shows that the cohomology of the hypercomplex solvmanifold $M_p^h$ is entirely encoded by the polynomial $L(t)$ appearing in Theorem \ref{thm: qfr_generating_function_hyp}.

Indeed, expanding the identity
\[
P_{M_p^h}(t) = (1+t)^4L(t) = (1+4t+6t^2+4t^3+t^4)\sum_{j\geq 0}\ell_j t^j,
\]
we obtain
\[
b_k(M_p^h) = \ell_k + 4\ell_{k-1} + 6\ell_{k-2} + 4\ell_{k-3} + \ell_{k-4}.
\]

Since $L(t)$ only contains even powers of $t$, we have $\ell_j=0$ for every odd $j$. Consequently,
\[
b_{2k} = \ell_{2k} + 6\ell_{2k-2} + \ell_{2k-4},
\]
while
\begin{equation} \label{eq: betti_impar}
    b_{2k+1} = 4\ell_{2k} + 4\ell_{2k-2}.
\end{equation}

As in the complex case, one may derive explicit formulas for the coefficients $\ell_k$ by expanding the polynomial $L(t)$, but we omit them here since they do not seem to provide additional insight.
\end{remark}

\begin{example}
Let $p \in \Delta'_n$ be a polynomial satisfying the quasi full rank condition, with $n = 4$ ($m=2$). The associated hypercomplex solvmanifold $M_p^h$ then has real dimension $20$. We begin by expanding the generating polynomial:
\begin{align*}
    L(t) &= (1 + 16t^2 + 36t^4 + 16t^6 + t^8)^2 \\
    &= 1 + 32t^2 + 328t^4 + 1184t^6 + 1810t^8 + 1184t^{10} + 328t^{12} + 32t^{14} + t^{16}.
\end{align*}
Hence,
\[
\ell_0 = 1, \quad \ell_2 = 32, \quad \ell_4 = 328, \quad \ell_6 = 1184, \quad \ell_8 = 1810,
\]
with the remaining coefficients determined by symmetry. Using the identities obtained in the Remark \ref{rem: l_k hyper},
\[
b_{2k} = \ell_{2k} + 6\ell_{2k-2} + \ell_{2k-4},
\qquad
b_{2k+1} = 4\ell_{2k} + 4\ell_{2k-2},
\]
we compute, for example,
\begin{align*}
    b_2 &= \ell_2 + 6\ell_0 = 32 + 6(1) = 38, \\
    b_3 &= 4\ell_2 + 4\ell_0 = 4(32) + 4(1) = 132, \\
    b_4 &= \ell_4 + 6\ell_2 + \ell_0 = 328 + 6(32) + 1 = 521.
\end{align*}
Proceeding similarly for the remaining degrees, we obtain the complete Betti sequence:
\[
(b_0,b_1,\dots,b_{20}) =
(1,4,38,132,521,1440,3184,6048,9242,11976,13228,\dots,1).
\]
\end{example}

\subsection{Comparison with compact hyper-Kähler manifolds}

It is known that in any $4n$-dimensional compact hyper-K\"ahler manifold the following conditions hold:
\begin{enumerate}
    \item[(i)] the odd-degree Betti numbers $b_{2k+1}$ are divisible by $4$ (\cite{Wak}),
    \item[(ii)] its Betti numbers satisfy the identity (\cite{Sal1, Sal}):
\begin{equation}\label{eq: salamon}
 n\, b_{2n}=2 \sum_{j=1}^{2n} (-1)^j (3j^2-n)b_{2n-j}.
\end{equation}
\end{enumerate}
We note that according to \cite[Main Theorem]{Hasegawa}, the hypercomplex solvmanifolds $M_p^h$ do not admit any hyper-K\"ahler metric for any $p\in \Delta_n$, since they are quotients of a completely solvable Lie group. However, it follows from Theorem \ref{thm: cohomology_hyp} and \eqref{eq: betti_impar} that their Betti numbers satisfy condition (i) whenever $p$ has full rank or quasi full rank. Furthermore, after lengthy computations, we see that \eqref{eq: salamon} is valid for $M_p^h$ when $p$ satisfies the full rank condition. 

We will show now that, more generally, Salamon's identity \eqref{eq: salamon} holds for $M_p^h$ for any $p \in \Delta_n$. This will be a consequence of the following two results, the second inspired by the work of Salamon in \cite{Sal1, Sal}. 

\begin{lemma}
    For any $p \in \Delta_n$, the Poincaré polynomial of any hypercomplex solvmanifold $M_p^h$ is divisible by $(1+t)^4$.
\end{lemma}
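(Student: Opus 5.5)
We would prove the divisibility by combining the Künneth factorization already used in the quasi full rank and full rank cases with the general structure of Lemma \ref{lemma: betti_combinatorial}; none of the arithmetic hypotheses is needed.

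First, for arbitrary $p\in\Delta_n$ (not only $p \in \Delta_n'$), the decomposition $\R^{4n+3}=V_0\oplus V_1$ of \eqref{eq: V_0_mas_V_1} still holds. Indeed, $A_p^h=0_3\oplus A_p^{\oplus 4}$ annihilates $V_0$, so $V_0$ is central and $\g_p^h=\bar{\g}_p\times\R^3$ as Lie algebras. By Hattori's isomorphism \eqref{deRham} together with the Künneth formula, $P_{M_p^h}(t)=P_{\g_p^h}(t)=(1+t)^3P_{\bar{\g}_p}(t)$. This step is exactly as in \eqref{eq: kunneth_hypercomplex}, which never used the rank hypotheses.

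Second, $\bar{\g}_p=\R e_0\ltimes_{A_p^{\oplus 4}}\R^{4n}$ is a diagonal almost abelian Lie algebra. It is unimodular, since $\tr A_p^{\oplus4}=4\tr A_p=0$ because the product of the roots equals $1$. Lemma \ref{lemma: betti_combinatorial} therefore applies for any $p$, with $\ell_k$ the number of $k$-subsets of the multiset $S$ whose elements sum to zero, and gives $b_k(\bar{\g}_p)=\ell_k+\ell_{k-1}$. In generating-function form this reads $P_{\bar{\g}_p}(t)=(1+t)\sum_k\ell_k t^k$, with $\sum_k \ell_k t^k\in\Z[t]$. Hence $P_{M_p^h}(t)=(1+t)^4\sum_k\ell_kt^k$, which is divisible by $(1+t)^4$ in $\Z[t]$.

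The only point needing care is that Lemma \ref{lemma: betti_combinatorial} is stated for unimodular diagonal algebras without any genericity assumption, so it covers roots equal to $1$ (zero eigenvalues) and arbitrary vanishing sums; this should be checked. Its proof indeed only uses Lemma \ref{lemma: formula_d} and a count of the zero coefficients, so it is valid in this generality. There is no real obstacle beyond this check.
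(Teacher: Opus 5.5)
Your proof is correct. It shares the paper's first step: the Künneth formula for $\g_p^h=\bar\g_p\times\R^3$ gives $P_{M_p^h}=(1+t)^3P_{\bar\g_p}$. Where you differ is in how you obtain the fourth factor of $(1+t)$.

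\textbf{The paper's route.} It does not invoke Lemma \ref{lemma: betti_combinatorial} at this point. Instead it argues that $\bar\g_p$ is unimodular and odd-dimensional. Poincaré duality then makes its Betti numbers palindromic of odd length, so $P_{\bar\g_p}(-1)=0$, and hence $(1+t)$ divides $P_{\bar\g_p}$ in $\Z[t]$.

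\textbf{Your route.} You read the factorization $P_{\bar\g_p}(t)=(1+t)\sum_k\ell_kt^k$ directly off the formula $b_k=\ell_k+\ell_{k-1}$. This is the same identity the paper uses in Corollaries \ref{cor: poincare_polynomials} and \ref{cor: poincare_polynomials_hyp}.

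\textbf{What each buys.}
\begin{itemize}
\item Your approach gives an explicit cofactor, $P_{M_p^h}(t)=(1+t)^4\sum_k\ell_kt^k$. Its coefficients are nonnegative integers, and they are palindromic: $\ell_k=\ell_{4n-k}$ by taking complements.
\item The cost is that you rely on the diagonal structure of $\bar\g_p$. Your check that Lemma \ref{lemma: betti_combinatorial} needs no genericity is exactly the right thing to verify. Zero eigenvalues are allowed, and its proof in fact does not use unimodularity either.
\item The paper's argument is structural and would apply to any unimodular odd-dimensional factor.
\item Even those hypotheses are more than needed. Comparing the Euler characteristics of the Chevalley--Eilenberg complex and of its cohomology gives $P_\g(-1)=\sum_k(-1)^k\binom{\dim\g}{k}=0$ for every nonzero Lie algebra $\g$.
\end{itemize}
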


\begin{proof}
    As a consequence of \eqref{eq: kunneth_hypercomplex}, we easily find that
    \[
    P_{\g_p^h}(t) = (1+t)^3 P_{\bar{\g}_p}.
    \]
    Since $\bar{\g}_p$ is unimodular and odd-dimensional, it follows that $P_{\bar{\g}_p} (-1) = 0$ and then
    \[
    P_{\g_p^h}(t) = (1+t)^4 Q(t) 
    \]
    with $Q \in \Z[t]$.
\end{proof}

\begin{proposition}
    Let $M^{4n}$ be a compact hypercomplex manifold. If the Poincaré polynomial $P_M$ of $M$ is divisible by $(1+t)^3$, then the Betti numbers of M satisfy Salamon's identity \eqref{eq: salamon}. 
\end{proposition}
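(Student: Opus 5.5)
The plan is to rewrite Salamon's identity \eqref{eq: salamon} as the vanishing of a single weighted alternating sum of \emph{all} the Betti numbers. That sum will be of the form $\sum_k (-1)^k q(k)\, b_k$ with $q$ a quadratic polynomial. Such sums are linear combinations of $P_M(-1)$, $P_M'(-1)$ and $P_M''(-1)$, which all vanish when $(1+t)^3$ divides $P_M$. The hypercomplex hypothesis is used only to guarantee that $M$ is a compact orientable manifold of dimension $4n$ (each $J_\alpha$ is a complex structure, so $M$ is oriented). Hence Poincaré duality holds: $b_{2n-j}=b_{2n+j}$ for $0\le j\le 2n$.

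First step: symmetrize. Consider
\[
\Sigma := \sum_{k=0}^{4n} (-1)^k \bigl(3(k-2n)^2-n\bigr) b_k = \sum_{j=-2n}^{2n} (-1)^j (3j^2-n)\, b_{2n+j}.
\]
The term $j=0$ contributes $-n\,b_{2n}$. Pairing $j$ with $-j$ and using $b_{2n+j}=b_{2n-j}$ (the weight $(-1)^j(3j^2-n)$ is even in $j$) gives
\[
\Sigma = -n\,b_{2n} + 2\sum_{j=1}^{2n} (-1)^j(3j^2-n)\, b_{2n-j}.
\]
Thus \eqref{eq: salamon} is equivalent to $\Sigma=0$. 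I will double-check this sign bookkeeping, since it is the only place where an error could slip in.

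Second step: use the divisibility. Write $P_M(t)=(1+t)^3Q(t)$. Then $P_M(-1)=P_M'(-1)=P_M''(-1)=0$, which gives
\[
\sum_k (-1)^k b_k=0,\qquad \sum_k (-1)^{k} k\, b_k=0,\qquad \sum_k (-1)^k k(k-1)\, b_k=0 .
\]
For the last two, note that $P_M'(-1)=\sum_k k b_k(-1)^{k-1}$ and $P_M''(-1)=\sum_k k(k-1)b_k(-1)^{k-2}$, and the overall signs are irrelevant. The polynomials $1$, $k$, $k(k-1)$ span all polynomials in $k$ of degree at most $2$. Therefore $\sum_k (-1)^k q(k) b_k=0$ for every such $q$, in particular for $q(k)=3(k-2n)^2-n$. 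This gives $\Sigma=0$, hence Salamon's identity.

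There is no real obstacle here. The only delicate point is the first step: matching the precise normalization of \eqref{eq: salamon} (the factor $2$, the range $1\le j\le 2n$, and the sign of $n\,b_{2n}$) with the symmetrized full sum $\Sigma$. After that, the divisibility hypothesis finishes the argument immediately. Together with the preceding lemma, this yields Salamon's identity for every $M_p^h$ with $p\in\Delta_n$.
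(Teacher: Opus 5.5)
Your proposal is correct and follows essentially the paper's route. Both arguments use $P_M(-1)=P_M'(-1)=P_M''(-1)=0$ together with Poincaré duality to fold the sums around the middle degree $2n$. The difference is only cosmetic. The paper derives the folded identities for $\sum_k(-1)^k b_k$ and $\sum_k(-1)^k k^2 b_k$ separately, killing the linear term in $(2n-i)^2$ by symmetry, and then forms the combination $n(\cdot)-3(\cdot)$. You instead observe directly that the weight $3(k-2n)^2-n$ lies in the span of $1$, $k$ and $k(k-1)$.
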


\begin{proof}
    Let $P(t) = \sum_{k=0}^{4n} b_k t^k$ be the Poincaré polynomial of $M$. We know that $P(t)$ is a self-reciprocal polynomial since $M$ satisfies Poincaré duality ($b_k = b_{4n-k}$, with $0 \leq k \leq 2n$); let us assume that it is divisible by $(1+t)^3$. This condition implies $P(-1) = P'(-1) = P''(-1) = 0$.
    
    Evaluating $P(-1) = 0$ gives:
    \[
    \sum_{k=0}^{4n} (-1)^k b_k = 0.
    \]
    We can center this sum around the middle dimension by changing the index to $k = 2n - i$, with $-2n \leq i \leq 2n$. Using the symmetry $b_{2n-i} = b_{2n+i}$, we obtain:
    \begin{equation}\label{eq: p(-1)}
    b_{2n} + 2 \sum_{i=1}^{2n} (-1)^i b_{2n-i} = 0. 
    \end{equation}

    Now we consider the derivatives. The condition $P'(-1)=0$ implies $\sum k(-1)^k b_k = 0$, while $P''(-1)=0$ implies $\sum k(k-1)(-1)^k b_k = 0$. Adding these two equations yields:
    \[
    \sum_{k=0}^{4n} k^2 (-1)^k b_k = 0.
    \]
    
    Applying the same index shift $k = 2n - i$, the equation becomes:
    \begin{align*}
        0 & = \sum_{i=-2n}^{2n} (2n-i)^2 (-1)^i b_{2n-i}\\ & = \sum_{i=-2n}^{2n} (4n^2 - 4ni + i^2) (-1)^i b_{2n-i} \\
        & = 4n^2 \sum_{i=-2n}^{2n} (-1)^i b_{2n-i} - 4n \sum_{i=-2n}^{2n} i(-1)^i i b_{2n-i} + \sum_{i=-2n}^{2n} i^2(-1)^i b_{2n-i}.
    \end{align*}
  
    The first sum vanishes due to \eqref{eq: p(-1)}. The second sum cancels out completely due to the symmetry of the Betti numbers (the term for $i$ precisely cancels the term for $-i$). We are thus left only with the third sum. Extracting the term $i=0$, we get:
    \begin{equation} \label{eq: third_sum}
        2 \sum_{i=1}^{2n} i^2 (-1)^i b_{2n-i} = 0.
    \end{equation}

    The Salamon identity in dimension $4n$ states that $n b_{2n} = 2 \sum_{i=1}^{2n} (-1)^i (3i^2 - n) b_{2n-i}$. Rearranging all terms to one side, this condition is equivalent to:
    \[
    n \left( b_{2n} + 2 \sum_{i=1}^{2n} (-1)^i b_{2n-i} \right) - 3 \left( 2 \sum_{i=1}^{2n} i^2 (-1)^i b_{2n-i} \right) = 0,
    \]
    and this clearly holds due to \eqref{eq: p(-1)} and \eqref{eq: third_sum}. 
\end{proof}

\begin{remark}
    There are examples of hypercomplex solvmanifolds whose cohomology does not behave like the cohomology of a compact hyper-Kähler manifold. Indeed, consider a nilmanifold $M$ obtained as a quotient of the 8-dimensional hypercomplex nilpotent Lie group $N_3$ from \cite{DF1}. Then its Betti numbers are $(1,5,15,25,28,25,15,5,1)$, which do not satisfy either (i) or (ii) above. Note also that the Poincaré polynomial of $M$ factors as
    \[ P_M(t)=(t+1)^2(t^2+1)(t^4+3t^3+7t^2+3t+1),\]
    so that $-1$ is only a double root of $P_M$. The nilpotent Lie group $N_3$ is the direct product of the 7-dimensional quaternionic Heisenberg group with the line $\R$.
\end{remark}

\section{Dolbeault cohomology of the complex solvmanifolds} \label{sec: dolbeault}

\subsection{Nakamura manifolds}
In this section, we relate our complex solvmanifolds $\Gamma^c\backslash G_p^c$ with the Nakamura manifolds introduced recently in \cite{CT}, inspired by the foundational work of I. Nakamura in \cite{Na}\footnote{The notion of Nakamura manifold was further generalized by A.\ Cattaneo very recently in \cite{Ca}.}.

Recall that a Nakamura manifold is constructed as a quotient of a completely solvable Lie group $G_M = \C \ltimes_\rho \C^n$, arising from a matrix $M\in \operatorname{SL}(n,\Z)$ that is diagonalizable over $\R$ with all its eigenvalues positive. Thus, there exists $P\in \operatorname{GL}(n,\R)$ such that 
\[ PMP^{-1}=\operatorname{diag}(\e^{\lambda_1},\ldots,\e^{\lambda_n})\]
for some real numbers $\lambda_i$, not all of them $0$, such that
\[ \sum_{i=1}^n \lambda_i =0.\]
We then consider the representation $\rho\colon \C \to \operatorname{GL}(n,\C)$ given by
\[ \rho(w)=\operatorname{diag}(\e^{\lambda_1 \operatorname{Re}w}, \ldots,\e^{\lambda_n \operatorname{Re}w} ),\]
which allows us to define the semidirect product $G_M=\C\ltimes \C^n$. The Lie group $G_M$ admits lattices; indeed, for any $\tau\in\R-\{0\}$, set 
\[ \Gamma'_\tau:=\Z\oplus i\tau \Z\subset \C, \qquad \Gamma''_P:=P\Z^n\oplus iP \Z^n\subset \C^n.\]
Since $\rho(\Gamma'_\tau)$ preserves $\Gamma''_P$, we can define $\Gamma_{P,\tau}:=\Gamma'_\tau\ltimes_\rho \Gamma''_P$, which can be seen to be a lattice of $G_M$. The solvmanifold $\Gamma_{P,\tau}\backslash G_M$ is called a \textit{Nakamura manifold}.

We point out that there is a group isomorphism $\Gamma_{P,\tau}\cong \Gamma_{P,\tau'}$ for any $\tau,\tau'\in \R-\{0\}$, so that $\Gamma_{P,\tau}\backslash G_M$ is diffeomorphic to $\Gamma_{P,\tau'}\backslash G_M$, according to Theorem \ref{thm:solv-isom}. However, we will see below that the complex geometry of the Nakamura manifolds does depend on the nonzero real number $\tau$.

\medskip

At the Lie algebra level, it was shown in \cite[Section 4.5]{CT} that the Lie algebra of $G_M$ has a basis $\{e_0,f_0,e_1,\ldots, e_n,f_1,\ldots, f_n\}$ with Lie bracket given by
\begin{equation}\label{eq: brackets} [e_0,e_i]=\lambda_i e_i, \quad [e_i,f_i]=\lambda_i f_i, \quad i=1,\ldots,n.
\end{equation}
Note that the real numbers $\lambda_i$ are the logarithms of the roots of the characteristic polynomial of $M$. 

This establishes that our Lie group $G_p^c$ arising from a polynomial $p\in \Delta'_n$ is isomorphic to a Lie group $G_{C_p}$ as above, where $C_p\in \operatorname{SL}(n,\Z)$ denotes the companion matrix of the polynomial $p$. Indeed, the Lie brackets in \eqref{eq: brackets} are exactly the Lie brackets for $\g_p^c= \R e_0 \ltimes_{A_p^c} \R^{2n+1}$, where $A_p^c = 0_1 \oplus A_p^{\oplus 2}$, under the identification $\e^{\lambda_i}=r_i$, with $r_1,\ldots,r_n$ the roots of $p$. Note that this isomorphism of Lie groups is actually a biholomorphism.

Recall from Section \ref{sec: construction} the matrix $Q_p\in \operatorname{GL}(n,\R)$ that satisfies $Q_p C_p Q_p^{-1}=\operatorname{diag}(r_1,\ldots,r_n)$. Then, the lattice $\Gamma_{Q_p,\tau}$ of $G_{C_p}$ as above corresponds to the following lattice in $G^c_p=\R\ltimes \R^{2n+1}$:
\[ \Gamma^c_{p,\tau}:=\{(p,(\tau q, Q_p v,Q_p w))\in G^c_p \colon p, q\in \Z,\,  v,w\in \Z^n\}. \]
Note that for $\tau=1$ we have $\Gamma^c_{p,1}=\Gamma^c_p$ from Section \ref{sec: construction}, constructed using Bock's result (Proposition \ref{prop: Bock}). 
Thus, the almost abelian solvmanifolds $\Gamma_{p,\tau}\backslash G^c_p$ are Nakamura manifolds in the sense of \cite{CT}.

\medskip

Regarding the Dolbeault cohomology of the Nakamura manifolds, it was shown in \cite[Section 4.4]{CT} that there exist certain values of $\tau\in \R-\{0\}$ for which the Dolbeault cohomology of the Nakamura manifold $\Gamma_{P,\tau}\backslash G_M$ can be computed using invariant forms, and moreover, an expression for the associated Hodge numbers $h^{r,s}$ was given. We will state these results more explicitly for our manifolds $\Gamma_{p,\tau}\backslash G^c_p$, which are Nakamura manifolds: given $p\in \Delta'_n$, consider subsets $I,J\subseteq \{1,\ldots,n\}$ with $|I|=r$ and $|J|=s$. 
Next, define
\[ c_{IJ}=\sum_{i \in I} s_i + \sum_{j \in J} s_j,\]
where, as usual, $s_i=\log r_i$ and $r_1,\ldots,r_n$ are the roots of $p$, and let us set 
\begin{equation}\label{eq: Nrs} 
N(r,s)=|\{(I,J) \colon |I|=r,\, |J|=s \text{ and } c_{IJ}=0\}|. 
\end{equation}

Consider the following condition on $\tau$:
\begin{equation}\label{eq: cIJ}
    \text{for any }I, J \text{ it holds that: } \tau c_{IJ}\in 2\pi \Z \Longleftrightarrow c_{IJ}=0. 
\end{equation}
It was shown in \cite{CT} that nonzero real numbers $\tau$ satisfying condition \eqref{eq: cIJ} exist. In the next theorem we summarize several results proved in \cite{CT}, adapted to our case:

\begin{theorem}\label{thm: summary}
    Let $p \in \Delta'_n$ be a polynomial with associated logarithmic parameters $s_i = \log r_i$ for $i = 1, \dots, n$, and assume that $\tau\in \R-\{0\}$ satisfies condition \eqref{eq: cIJ}. Then the Nakamura manifolds $M^c_{p,\tau}=\Gamma_{p,\tau}\backslash G^c_p$ have the following properties:
    \begin{enumerate}
    \item[\rm{(i)}] complex conjugation induces an isomorphism 
    \[ \overline{H^{r,s}_{\overline{\partial}}(\Gamma_{p,\tau}\backslash G^c_p)}\cong H^{s,r}_{\overline{\partial}}(\Gamma_{p,\tau}\backslash G^c_p),\]
    \item[\rm{(ii)}] the Frölicher spectral sequence degenerates at the $E_1$ page,
    \item[\rm{(iii)}] the $\partial\overline{\partial}$-Lemma is satisfied,
    \item[\rm{(iv)}] its Hodge numbers are given by
    \begin{equation}\label{eq: hodge_formula} h^{r,s} = N(r,s) + N(r-1, s) + N(r, s-1) + N(r-1, s-1),
    \end{equation}
    where $N(a, b) = 0$ if $a < 0$ or $b < 0$,
    \item[\rm{(v)}] its Betti numbers satisfy $b_k=\sum_{r+s=k} h^{r,s}$.
    \end{enumerate} 
\end{theorem}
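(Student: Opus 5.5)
This theorem is a summary of results proved in \cite{CT}. The plan is therefore to check that our manifolds satisfy the hypotheses of \cite{CT}, and then to translate their general statements into our notation. The identification itself was done above: $G^c_p$ is biholomorphic to $G_{C_p}$ with $\lambda_i=s_i$, the companion matrix $C_p\in\operatorname{SL}(n,\Z)$ is diagonalizable over $\R$ with positive eigenvalues $r_i$, and not all $\lambda_i$ vanish since $p\in\Delta'_n$. Moreover, $\Gamma^c_{p,\tau}$ corresponds to the lattice $\Gamma_{Q_p,\tau}$. So $M^c_{p,\tau}$ is a Nakamura manifold in the sense of \cite{CT}, and condition \eqref{eq: cIJ} is exactly their admissibility condition on $\tau$, written with $c_{IJ}$ in place of their weights.

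The core step is the invariant-form computation of $H_{\overline\partial}$. Take the coframe $\phi^0=e^0+i f^0$ and $\phi^j$ built from $e^j,f^j$ ($j\ge1$). Under $\rho$, the coframe element $\phi^j$ transforms by $\e^{-s_j\operatorname{Re}w}$ and $\overline{\phi^j}$ by $\e^{-s_j\operatorname{Re}w}$ as well. Following \cite{CT} (Kasuya's method), a finite-dimensional subcomplex $B^{*,*}\subset\Omega^{*,*}$ computing Dolbeault cohomology is spanned by the forms
\[
\e^{\,c_{IJ}\,(\text{linear function of } w)}\,\phi^{0\,\text{or}\,\emptyset}\wedge\phi^I\wedge\overline{\phi^{0\,\text{or}\,\emptyset}}\wedge\overline{\phi^J}.
\]
Such a form descends to the quotient only when its character is trivial on $\Gamma'_\tau$. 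The character is automatically trivial along $\Z$, and along $i\tau\Z$ this requires $\tau c_{IJ}\in2\pi\Z$. Condition \eqref{eq: cIJ} turns this into $c_{IJ}=0$, so the surviving generators are the invariant forms with $c_{IJ}=0$.

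On these forms $\overline\partial$ vanishes, because the only nonzero differentials pick up the factor $c_{IJ}$. Hence $H^{r,s}$ has a basis indexed by pairs $(I,J)$ with $c_{IJ}=0$, each optionally wedged with $\phi^0$ and/or $\overline{\phi^0}$. Counting these gives \eqref{eq: hodge_formula}, which is (iv). For (i), conjugation swaps $I\leftrightarrow J$ and $c_{IJ}=c_{JI}$, so it maps the basis of $H^{r,s}$ to that of $H^{s,r}$.

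For (ii) and (v), compare with de Rham cohomology. By Hattori, $b_k=\dim H^k(\g^c_p)$, and Lemma \ref{lemma: betti_combinatorial} applied to $A^c_p$ together with \eqref{eq: kunneth_complex} expresses $b_k$ through $\ell$-counts of the multiset $S$. A $k$-subset of $S$ is a pair $(I,J)$ with $|I|+|J|=k$ and the same sum, so
\[
\sum_{r+s=k}N(r,s)=\ell_k,
\]
and then $\sum_{r+s=k}h^{r,s}$ equals $\ell_k+2\ell_{k-1}+\ell_{k-2}=b_k$. This is (v). Since the Frölicher inequality $b_k\le\sum_{r+s=k} h^{r,s}$ is always an equality exactly when the spectral sequence degenerates, (ii) follows.

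For (iii), use the representatives above: they are simultaneously $d$-, $\partial$- and $\overline\partial$-closed. Then apply the Angella–Tomassini characterization of the $\partial\overline\partial$-lemma (equality $\sum(h_{BC}+h_A)=2b_k$), or show directly that the Bott–Chern cohomology is spanned by the same forms, as in \cite{CT}.

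The main obstacle is justifying that $B^{*,*}$ really computes the Dolbeault cohomology of the quotient, rather than only the invariant cohomology. Here I would rely on \cite{CT}'s result (a Kasuya-type theorem for $\C\ltimes_\rho\C^n$) and not reprove it.
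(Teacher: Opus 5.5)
Your proposal is correct, and its skeleton is the paper's. You identify $M^c_{p,\tau}$ with a Nakamura manifold in the sense of \cite{CT}, which the paper does in the paragraphs preceding the statement. You then rely on \cite{CT} for the one genuinely analytic input: that the Kasuya-type subcomplex computes $H_{\overline\partial}$ of the quotient.

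The difference is how much is imported. The paper takes all five items wholesale from \cite{CT}. You import only that input and rederive the rest:
\begin{itemize}
\item (i) and (iv) from the explicit subcomplex. Your characters $\e^{-ic_{IJ}\operatorname{Im}w}$, their restriction to $\Z\oplus i\tau\Z$, and the vanishing of $\overline\partial$ on the $c_{IJ}=0$ generators are all correct.
\item (v) and then (ii) from the paper's own de Rham computation, via Hattori, Lemma~\ref{lemma: betti_combinatorial}, \eqref{eq: kunneth_complex}, and the bijection giving $\sum_{r+s=k}N(r,s)=\ell_k$.
\end{itemize}
A small correction: apply the lemma to $\tilde{\g}_p$ before using \eqref{eq: kunneth_complex}, or to $A^c_p$ alone; your final formula $\ell_k+2\ell_{k-1}+\ell_{k-2}$ is right either way. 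What your route buys is that the later checks $b_k=\sum_{r+s=k}h^{r,s}$ in the full rank and quasi full rank subsections become instances of one identity. That identity holds for every $p\in\Delta'_n$ and is independent of \cite{CT} once (iv) is known.

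The thin spot is (iii). $d$-closedness of the Dolbeault representatives does not by itself give the $\partial\overline\partial$-lemma. The Angella--Tomassini equality also needs Bott--Chern and Aeppli numbers, which you never compute. You can close it with what you already have:
\begin{itemize}
\item By Hattori, $H^k_{dR}(M^c_{p,\tau};\C)$ has a basis of classes of invariant monomials of weight zero (weights taken with respect to $\ad e_0$). These are exactly your pure-type representatives with $c_{IJ}=0$.
\item Hence $H^k=\bigoplus_{r+s=k}V^{r,s}$, where $\overline{V^{r,s}}=V^{s,r}$ and $\dim V^{r,s}=h^{r,s}$ by (iv).
\item Since $\bigoplus_{r\ge q}V^{r,k-r}\subseteq F^qH^k$ and (ii) gives $\dim F^qH^k=\sum_{r\ge q}h^{r,k-r}$, the Hodge filtration is $k$-opposed to its conjugate.
\item The Deligne--Griffiths--Morgan--Sullivan criterion ($E_1$-degeneration plus opposed filtrations) then yields the $\partial\overline\partial$-lemma.
\end{itemize}
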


In what follows, we will compute the Hodge numbers of the Nakamura solvmanifolds $M_{p,\tau}^c = \Gamma^c_{p,\tau} \backslash G_p^c$, where $\tau$ satisfies the condition \eqref{eq: cIJ}, under the usual hypothesis of $p$ being full rank or quasi full rank. Assuming that the polynomial $p$ satisfies either of these hypotheses, the combinatorial terms $N(r,s)$ admit an explicit description. 

\subsection{The full rank case}

\begin{lemma} \label{lemma: N_pq}
    Let $p \in \Delta'_n$ be a polynomial with associated logarithmic parameters $s_i = \log r_i$ for $i = 1, \dots, n$. If $p$ satisfies the full rank condition, then:
    \begin{equation*}
        N(r, s) = \begin{cases}
            1 & \text{if } (r, s) = (0, 0) \text{ or } (n, n), \\[0.3em]
            \binom{n}{r} & \text{if } r + s = n, \\
            0 & \text{otherwise}.
        \end{cases}
    \end{equation*}
\end{lemma}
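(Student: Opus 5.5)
The plan is to rewrite the vanishing condition $c_{IJ}=0$ as a single $\Z$-linear relation among the logarithmic parameters and then apply the full rank condition in its additive form \eqref{eq:k_i iguales}, exactly as in the proof of Lemma \ref{lemma: l_k_full_rank}. Given subsets $I,J\subseteq\{1,\ldots,n\}$, set $k_i:=\mathbf 1_I(i)+\mathbf 1_J(i)\in\{0,1,2\}$ for each $i$. Then
\[
c_{IJ}=\sum_{i\in I}s_i+\sum_{j\in J}s_j=\sum_{i=1}^n k_i s_i .
\]
By \eqref{eq:k_i iguales}, $c_{IJ}=0$ holds if and only if $k_1=\cdots=k_n=:c$, with $c\in\{0,1,2\}$.

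Next I would treat the three values of $c$ separately.
\begin{itemize}
\item If $c=0$, then no index lies in $I$ or in $J$, so $I=J=\emptyset$ and $(r,s)=(0,0)$. This contributes exactly one pair.
\item If $c=2$, then every index lies in both sets, so $I=J=\{1,\ldots,n\}$ and $(r,s)=(n,n)$. Again there is exactly one pair.
\item If $c=1$, then every index lies in exactly one of $I$ and $J$. Hence $J$ is the complement of $I$ and $r+s=n$. For fixed $r$, such pairs correspond bijectively to the choice of $I$ of size $r$, giving $\binom{n}{r}$ pairs.
\end{itemize}
Every other pair $(I,J)$ has nonconstant coefficients $k_i$, so $c_{IJ}\neq 0$, and therefore $N(r,s)=0$ for all remaining $(r,s)$.

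Finally I would check that the three cases in the statement are consistent and do not overlap. Since $n\ge 2$, the points $(0,0)$ and $(n,n)$ do not lie on the line $r+s=n$. The boundary points $(0,n)$ and $(n,0)$ do lie on that line, and there the formula gives $\binom{n}{0}=\binom{n}{n}=1$, which is correct.

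There is no real obstacle here. The only point needing care is that the pair $(I,J)$ is ordered and that $I$ and $J$ may intersect, which is precisely why the coefficients take values in $\{0,1,2\}$ rather than in $\{0,1\}$.
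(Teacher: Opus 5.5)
Your proposal is correct and follows essentially the same argument as the paper: you encode $c_{IJ}$ as $\sum_i k_i s_i$ with $k_i\in\{0,1,2\}$, invoke the full rank condition to force all $k_i$ equal, and treat $c=0,1,2$ separately. Your additional check that $(0,0)$ and $(n,n)$ do not lie on the line $r+s=n$ is a small but welcome consistency remark that the paper leaves implicit.
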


\begin{proof}
    For each parameter $s_i$, define $c_i \in \{0, 1 ,2\}$ to be its multiplicity in the sum associated with the pair $(I,J)$, where $|I|=r$ and $|J|=s$. More precisely, $c_i = 1$ if $i$ belongs to exactly one of $I$ and $J$, $c_i = 2$ if $i$ belongs to both, and $c_i = 0$ otherwise. Then, the condition $c_{IJ}=0$ becomes 
    \[ 
    \sum_{i=1}^{n} c_i s_i = 0. 
    \] 
    Since $p$ satisfies the full rank condition, this equality holds if and only if all the coefficients $c_i$ are equal to a constant $c \in \{0, 1, 2\}$.
    
    If $c=0$, then $c_i=0$ for every $i$,  which implies that $I=J=\emptyset$. Hence, $(r,s)=(0,0)$ and 
    \[ 
    N(0,0)=1. 
    \] 
    
    If $c=1$, then each index belongs to exactly one of the sets $I$ and $J$. Thus, $I$ and $J$ form a partition of $\{1,\dots,n\}$ and $r+s=n$. Moreover, once $I$ is chosen, the set $J$ is uniquely determined as its complement. Therefore, 
    \[ 
    N(r,s)=\binom{n}{r} \qquad \text{whenever } r+s=n. 
    \] 
    
    Finally, if $c=2$, then $c_i=2$ for all $i$, meaning that every index belongs to both sets. Hence, $I=J=\{1,\dots,n\}$, yielding $(r,s)=(n,n)$ and 
    \[ 
    N(n,n)=1. 
    \]
 
    In all other combinations for $r$ and $s$, no such constant $c$ exists, yielding $N(r, s) = 0$.
\end{proof}

\begin{theorem} \label{thm: hodge_complex}
    Let $M_{p,\tau}^c$ be the complex solvmanifold of complex dimension $n+1$ associated with a polynomial $p \in \Delta'_n$ satisfying the full rank condition and with $\tau$ satisfying condition \eqref{eq: cIJ}. Let $h^{r,s} = \dim H^{r,s}_{\bar{\partial}}(M_p^c)$ denote the Hodge numbers of $M_p^c$. Then:
    \begin{itemize}
        \item \textbf{General Case ($n \geq 3$):}
        The nonzero Hodge numbers are:
        \begin{align*}
            &h^{0,0} = h^{1,0} = h^{0,1} = h^{1,1} = 1, \\
            &h^{r, n-r} = \binom{n}{r}, \quad 0\leq r \leq n, \\
            &h^{r, n+1-r} = \binom{n+1}{r}, \quad 0\leq r \leq n+1,\\
            &h^{r, n+2-r} = \binom{n}{r-1}, \quad 1\leq r \leq n+1,\\
            &h^{n,n} = h^{n+1,n} = h^{n,n+1} = h^{n+1,n+1} = 1.
        \end{align*}
        
        \item \textbf{Particular Case ($n = 2$):}
        The complex dimension is $3$ and the Hodge numbers are: 
        \begin{align*}
            &h^{0,0} = 1, \\ &h^{1,0} = h^{0,1} = 1, \\
            &h^{2,0} = h^{0,2} = 1, \quad h^{1,1} = 3, \\
            &h^{3,0} = h^{0,3} = 1, \quad h^{2,1} = h^{1,2} = 3, \\
            &h^{3,1} = h^{1,3} = 1, \quad h^{2,2} = 3, \\
            &h^{3,2} = h^{2,3} = 1, \\
            &h^{3,3} = 1.
        \end{align*}
    \end{itemize}
\end{theorem}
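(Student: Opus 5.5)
The plan is to substitute the values of $N(r,s)$ from Lemma \ref{lemma: N_pq} into formula \eqref{eq: hodge_formula} of Theorem \ref{thm: summary}(iv). Condition \eqref{eq: cIJ} is what allows that formula to be used. Write $N = \delta_{(0,0)} + D + \delta_{(n,n)}$, where $D(r,s)=\binom{n}{r}$ on the antidiagonal $r+s=n$. The map $N \mapsto h$ is the linear operator $h(r,s)=\sum_{a,b\in\{0,1\}} N(r-a,s-b)$. We can therefore compute the image of each of the three pieces separately and then add them.

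First, the point mass $\delta_{(0,0)}$ contributes $1$ at $(0,0),(1,0),(0,1),(1,1)$. Likewise $\delta_{(n,n)}$ contributes $1$ at $(n,n),(n+1,n),(n,n+1),(n+1,n+1)$.

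Second, the antidiagonal piece $D$ contributes on three antidiagonals. On $r+s=n$, only the shift $(a,b)=(0,0)$ lands there, which gives $\binom{n}{r}$. On $r+s=n+1$, the shifts $(1,0)$ and $(0,1)$ land there, which gives $\binom{n}{r-1}+\binom{n}{r}=\binom{n+1}{r}$ by Pascal's rule. On $r+s=n+2$, only the shift $(1,1)$ lands there, which gives $\binom{n}{r-1}$. The ranges of $r$ are those where the relevant binomial coefficients are nonzero, using $N(a,b)=0$ for negative arguments.

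Finally, we check overlaps. The first cluster lives on total degrees $0,1,2$, the antidiagonal contributions on degrees $n,n+1,n+2$, and the top cluster on degrees $2n,2n+1,2n+2$. For $n\ge3$ we have $2<n$ and $n+2<2n$, so all supports are disjoint and the list in the statement follows.

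For $n=2$, the three groups of degrees overlap at degrees $2$ and $4$, and the contributions must simply be added. At degree $2$, $h^{1,1}=1+\binom{2}{1}=3$, while $h^{2,0}=h^{0,2}=1$. At degree $3$, $h^{3,0}=h^{0,3}=\binom{3}{0}=1$ and $h^{2,1}=h^{1,2}=\binom{3}{1}=3$. At degree $4$, $h^{2,2}=\binom{2}{1}+1=3$ and $h^{3,1}=h^{1,3}=\binom{2}{2}=1$. The remaining values come from the top cluster.

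As a sanity check, summing over $r+s=k$ and using Theorem \ref{thm: summary}(v) should reproduce the Betti numbers of Theorem \ref{thm: cohomology_complex}. For instance, for $n=2$ it gives $(1,2,5,8,5,2,1)$.

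The only real obstacle is this bookkeeping of overlaps in the case $n=2$. The rest is a direct substitution together with Pascal's identity.
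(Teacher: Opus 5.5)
Your proposal is correct and follows the same route as the paper. You substitute the values of $N(r,s)$ from Lemma~\ref{lemma: N_pq} into \eqref{eq: hodge_formula}, observe that for $n\geq 3$ the contributions of $(0,0)$, $(n,n)$ and the antidiagonal $r+s=n$ live in disjoint total degrees ($0$--$2$, $n$--$n+2$, $2n$--$2n+2$), apply Pascal's identity on $r+s=n+1$, and add the overlapping contributions directly when $n=2$. Your decomposition $N=\delta_{(0,0)}+D+\delta_{(n,n)}$ is just a tidier packaging of the paper's $2\times 2$-block argument, and your $n=2$ values all check out.
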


\begin{proof}
We compute the Hodge numbers of $M_{p,\tau}^c$ using the general formula \eqref{eq: hodge_formula} from Theorem \ref{thm: summary} (iv) and the values of the combinatorial coefficients $N(r,s)$ described in Lemma \ref{lemma: N_pq}.

Assume first that $n \geq 3$. In this case, $N(r,s)$ is nonzero only at the points $(0,0)$ and $(n,n)$, and for indices satisfying 
\[ 
r+s=n. 
\] 
Since the four terms appearing in \eqref{eq: hodge_formula} involve the values \[ 
N(r,s), \qquad N(r-1,s), \qquad N(r,s-1), \qquad N(r-1,s-1), 
\] 
their arguments lie in a block $2\times 2$ whose coordinate sums range from $r+s-2$ to $r+s$. If such a block contains the origin $(0,0)$, then its maximal coordinate sum is $2$, which cannot meet the relation $i+j=n$ when $n\geq 3$. Conversely, if the block contains $(n,n)$, then its minimal coordinate sum is $2n-2>n$.

It follows that, for $n\geq 3$, the contributions arising from $(0,0)$, $(n,n)$, and the line $i+j=n$ are pairwise disjoint. Consequently, the four terms in \eqref{eq: hodge_formula} can be analyzed independently. 

The contribution of $(0,0)$ yields 
\[ 
h^{0,0} = h^{1,0} = h^{0,1} = h^{1,1} = 1, 
\] 
while the contribution from $(n,n)$ gives 
\[ 
h^{n,n} = h^{n+1,n} = h^{n,n+1} = h^{n+1,n+1} = 1. 
\] 

If $r+s=n$, only the term $N(r,s)$ is nonzero, and therefore 
\[ 
h^{r,n-r} = \binom{n}{r}, \qquad 0\leq r\leq n. 
\] 

If $r+s=n+1$, the only nonzero contributions are $N(r-1,s)$ and $N(r,s-1)$. Writing $s=n+1-r$, Lemma \ref{lemma: N_pq} gives 
\[ 
h^{r,n+1-r} = \binom{n}{r-1} + \binom{n}{r}. 
\] 
Applying Pascal's identity, we obtain 
\[ 
h^{r,n+1-r} = \binom{n+1}{r}, \qquad 0\leq r\leq n+1. 
\] 

Finally, if $r+s=n+2$, only the term $N(r-1,s-1)$ contributes, yielding 
\[ 
h^{r,n+2-r} = \binom{n}{r-1}, \qquad 1\leq r\leq n+1. 
\]

For $n=2$, the diagonal $r+s=2$ is no longer disjoint from the contributions arising from $(0,0)$ and $(2,2)$. In particular, 
\[ 
N(2,0)=1, \qquad N(1,1)=2, \qquad N(0,2)=1. 
\] 
A direct evaluation of \eqref{eq: hodge_formula} then yields the Hodge numbers listed in the statement. For example, 
\[ 
h^{1,1} = N(1,1) + N(0,1) + N(1,0) + N(0,0) = 2+0+0+1 = 3, 
\] 
and the remaining values are obtained similarly.
\end{proof}

\begin{remark}
When $n=2$, the solvmanifolds $M^c_{p,\tau}$ correspond to the completely solvable Nakamura manifolds appearing in case (C) in \cite[Section 5.1]{Kas}, and their Hodge numbers coincide with the values found in Theorem \ref{thm: cohomology_complex}. The Hodge diamond for these manifolds is given by:   
\[
\begin{array}{ccccccc}
&&& 1 &&& \\
&& 1 && 1 && \\
& 1 && 3 && 1 & \\
1 && 3 && 3 && 1 \\
& 1 && 3 && 1 & \\
&& 1 && 1 && \\
&&& 1 &&&
\end{array}
\]
\end{remark}

\begin{example}
The Hodge diamond for the case $n=4$ (that is, complex dimension equal to 5), is shown in the figure below.
\[
\begin{array}{ccccccccccc}
&&&&& 1 &&&&& \\
&&&& 1 && 1 &&&& \\
&&& 0 && 1 && 0 &&& \\
&& 0 && 0 && 0 && 0 && \\
& 1 && 4 && 6 && 4 && 1 & \\
1 && 5 && 10 && 10 && 5 && 1 \\
& 1 && 4 && 6 && 4 && 1 & \\
&& 0 && 0 && 0 && 0 && \\
&&& 0 && 1 && 0 &&& \\
&&&& 1 && 1 &&&& \\
&&&&& 1 &&&&& \\
\end{array}
\]
\end{example}

In general, for any $n \geq 3$ (corresponding to  complex dimension $\geq 4$), the Hodge diamond exhibits a highly symmetric structure related to Pascal's triangle. Specifically, the central horizontal row (defined by the Hodge numbers $h^{r,s}$ with $r+s=n+1$) coincides exactly with the $(n+1)$-th row of Pascal's triangle. This combinatorial pattern extends to the adjacent rows immediately above and below, where $r+s=n$ and $r+s=n+2$, respectively. Outside of these three rows, several lines of Hodge numbers vanish entirely, with the exception of two $2 \times 2$ blocks at the extremities: the top corner where $h^{0,0} = h^{1,0} = h^{0,1} = h^{1,1} = 1$, and the bottom corner where $h^{n,n} = h^{n+1,n} = h^{n,n+1} = h^{n+1,n+1} = 1$.

\begin{remark}
As stated in Theorem \ref{thm: summary}(v), the identity
\[
    b_k = \sum_{r+s=k} h^{r,s}
\]
holds for any Nakamura manifold as long as we choose the nonzero real number $\tau$ satisfying condition \eqref{eq: cIJ}. In the full rank setting we can also verify this by comparing the Betti numbers of Theorem \ref{thm: cohomology_complex} with the Hodge numbers $h^{r,s}$ in Theorem \ref{thm: hodge_complex}. Indeed, for $n \geq 3$, the nontrivial sums of Hodge numbers are
\begin{align*}
\sum_{r+s=n} h^{r,s}
    &= \sum_{r=0}^n \binom{n}{r}
    = 2^n, \\
\sum_{r+s=n+1} h^{r,s}
    &= \sum_{r=0}^{n+1} \binom{n+1}{r}
    = 2^{n+1}, \\
\sum_{r+s=n+2} h^{r,s}
    &= \sum_{r=1}^{n+1} \binom{n}{r-1}
    = 2^n,
\end{align*}
which coincide with $b_n$, $b_{n+1}$, and $b_{n+2}$, respectively. Likewise,
\[
b_0=b_{2n+2}=1, \qquad
b_1=b_{2n+1}=2, \qquad
b_2=b_{2n}=1,
\]
and all remaining Betti and Hodge numbers vanish.
\end{remark}

\subsection{The quasi full rank case}
Let $p \in \Delta'_n$ be a self-reciprocal polynomial of even degree $n = 2m$. The generating function approach used for the Betti numbers can be naturally extended to the computation of the Hodge numbers $h^{p,q}$ of the complex solvmanifold $M_{p,\tau}^c$. Recall from \eqref{eq: Nrs} that $N(r,s)$ counts the number of pairs of subsets $I,J \subseteq \{1,\dots,n\}$ with $|I|=r$ and $|J|=s$ such that
\[
\sum_{i\in I} s_i + \sum_{j\in J} s_j =0.
\]
We encode these quantities into the bivariate generating polynomial
\[
Q(x,y)=\sum_{r,s} N(r,s)x^r y^s,
\]
where the exponents of $x$ and $y$ record the cardinalities of the subsets $I$ and $J$, respectively. This generating function will provide an effective way to compute the Hodge polynomial
\[
H(x,y)=\sum_{r,s} h^{r,s}x^r y^s
\]
in the quasi full rank setting.

\begin{theorem} \label{thm: hodge_generating_function}
    Let $M_{p,\tau}^c$ be the complex solvmanifold of complex dimension $n+1$ associated with a polynomial $p \in \Delta'_n$ ($n=2m$) satisfying the quasi full rank condition and with $\tau$ satisfying condition \eqref{eq: cIJ}. Then the Hodge polynomial of $M_{p,\tau}^c$ is given by:
    \begin{equation*} \label{eq: bivariate_hodge}
        H(x,y) = (1+x)(1+y) \left( 1 + (x+y)^2 + x^2 y^2 \right)^m.
    \end{equation*}
\end{theorem}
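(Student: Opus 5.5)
The plan has two steps. First, formula \eqref{eq: hodge_formula} of Theorem \ref{thm: summary}(iv) translates into a factorization of generating polynomials. Second, the quasi full rank condition lets us compute $Q(x,y)$ block by block, exactly as in the proof of Theorem \ref{thm: qfr_generating_function}.

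\emph{From $N$ to $h$.} Multiplying \eqref{eq: hodge_formula} by $x^ry^s$ and summing over all $r,s$ (with the convention $N(a,b)=0$ for negative indices), the four terms contribute $Q$, $xQ$, $yQ$ and $xyQ$ respectively. Hence
\[
H(x,y)=(1+x+y+xy)\,Q(x,y)=(1+x)(1+y)\,Q(x,y).
\]
So it suffices to prove $Q(x,y)=\bigl(1+(x+y)^2+x^2y^2\bigr)^m$.

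\emph{Computing $Q$.} Label the roots so that the parameters are $s_1,\dots,s_m,s_{m+1}=-s_1,\dots,s_{2m}=-s_m$, and group the indices into the $m$ blocks $\{i,m+i\}$. For a pair $(I,J)$, let $a_i$ (resp.\ $a_i'$) count how many of $I,J$ contain $i$ (resp.\ $m+i$), so $a_i,a_i'\in\{0,1,2\}$. Then
\[
c_{IJ}=\sum_{i=1}^m (a_i-a_i')s_i.
\]
By the quasi full rank condition \eqref{eq: QFR}, $c_{IJ}=0$ if and only if $a_i=a_i'$ for every $i$. So the condition is local to each block, and the pair $(I,J)$ decomposes into independent choices $(I\cap\{i,m+i\},\,J\cap\{i,m+i\})$. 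The bivariate weight $x^{|I|}y^{|J|}$ is multiplicative over blocks. Therefore $Q$ is the $m$-th power of the single-block polynomial
\[
Q_{\mathrm{block}}(x,y)=\sum x^{|I_0|}y^{|J_0|},
\]
where the sum runs over $I_0,J_0\subseteq\{1,2\}$ (standing for $s_i$ and $-s_i$) such that $s_i$ and $-s_i$ are covered equally often.

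\emph{Enumerating one block.} There are three cases according to the common multiplicity.
\begin{itemize}
\item[(0)] Multiplicity $0$: $I_0=J_0=\emptyset$, contributing $1$.
\item[(1)] Multiplicity $1$: each of the two indices lies in exactly one of $I_0,J_0$. This gives four choices, namely $I_0=\{1,2\}$ (weight $x^2$), $J_0=\{1,2\}$ (weight $y^2$), and the two mixed choices (weight $xy$ each), for a total of $x^2+2xy+y^2=(x+y)^2$.
\item[(2)] Multiplicity $2$: $I_0=J_0=\{1,2\}$, contributing $x^2y^2$.
\end{itemize}
Thus $Q_{\mathrm{block}}=1+(x+y)^2+x^2y^2$, and the theorem follows. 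As a sanity check, setting $x=y=t$ gives $(1+t)^2(1+4t^2+t^4)^m$, which matches Corollary \ref{cor: poincare_polynomials} via Theorem \ref{thm: summary}(v).

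The only delicate point is justifying that the vanishing of $c_{IJ}$ is equivalent to the per-block balance. In the full rank case the equivalence was direct; here one must rewrite $c_{IJ}$ with coefficients of $s_i$ and $-s_i$ kept separate, so that it matches the form of \eqref{eq: QFR} with $c_i=a_i$ and $c_i'=a_i'$. This requires $p(1)\neq0$, so that all $s_i\neq 0$ and the pairing $\pm s_i$ is well defined. Everything else is bookkeeping.
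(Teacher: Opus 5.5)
Your proposal is correct and follows essentially the same route as the paper. Both derive $H=(1+x)(1+y)Q$ from \eqref{eq: hodge_formula} and then use the quasi full rank condition to get $Q=Q_{\mathrm{block}}^m$, with the same three per-block configurations giving $1+(x+y)^2+x^2y^2$; your rewriting $c_{IJ}=\sum_i (a_i-a_i')s_i$ also makes explicit the block-independence step that the paper only asserts.
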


\begin{proof}
Since $p$ is self-reciprocal, the set of logarithmic parameters is given by
\[
S=\{s_1,\dots,s_m,-s_1,\dots,-s_m\}.
\]
For each $i=1,\dots,m$, let
\[
B_i=\{i,m+i\}
\]
be the pair of indices corresponding to the parameters $s_i$ and $-s_i$. Given subsets
\[
I,J\subset\{1,\dots,n\},
\]
define
\[
I_i=I\cap B_i,
\qquad
J_i=J\cap B_i.
\]
Then, condition $c_{IJ}=0$ holds if and only if it is satisfied independently on each block $B_i$.

We encode the contribution of a pair $(I,J)$ by the monomial $x^{|I|}y^{|J|}$.
Since the blocks are independent, the problem reduces to determining the contribution of a single block $B_i=\{i,m+i\}$, corresponding to the pair of parameters $\{s_i,-s_i\}$. 

There are only three possible configurations for $(I_i,J_i)$ for which the sum of the parameters indexed by $I_i$ and $J_i$ vanishes:

\begin{itemize} 
    \item If $I_i=J_i=\emptyset$, the contribution is $1$. 
    \item If exactly one index corresponding to $s_i$ and one corresponding to $-s_i$ are selected across $I_i$ and $J_i$, then the vanishing condition forces both to appear exactly once. Since each may belong independently to either $I_i$ or $J_i$, the four admissible possibilities contribute 
    \[ 
    x^2+2xy+y^2 = (x+y)^2. 
    \] 
    \item If both indices in $B_i$ belong simultaneously to $I_i$ and to $J_i$, then $|I_i|=|J_i|=2$, yielding the contribution
    \[ 
    x^2y^2. 
    \] 
    \end{itemize}

Therefore, the generating polynomial associated with a single block is 
\[ 
Q_{\mathrm{block}}(x,y) = 1 + (x + y)^2 + x^2 y^2. 
\]

Since the $m$ blocks are independent, the global generating polynomial factorizes as 
\[ 
Q(x,y) = \bigl(Q_{\mathrm{block}}(x,y)\bigr)^m. 
\] 

Finally, by equation \eqref{eq: hodge_formula} from Theorem \ref{thm: summary}(iv),
\[
h^{r,s} = N(r,s) + N(r-1,s) + N(r,s-1) + N(r-1,s-1).
\]

At the level of generating polynomials, multiplication by $x$ shifts the first index by one, while multiplication by $y$ shifts the second index by one. Thus, the coefficient of $x^r y^s$ in $xQ(x,y)$ equals $N(r-1,s)$, while the corresponding coefficients in $yQ(x,y)$ and $xyQ(x,y)$ equal $N(r,s-1)$ and $N(r-1,s-1)$, respectively. Hence 
\[ 
H(x,y) = Q(x,y) + xQ(x,y) + yQ(x,y) + xyQ(x,y) = (1+x)(1+y)Q(x,y). 
\] 

Substituting the expression for $Q(x,y)$ yields 
\[ 
H(x,y) = (1+x)(1+y) \left( 1+(x+y)^2+x^2y^2 \right)^m, 
\] 
as claimed.
\end{proof}

\begin{remark}
    The identity
    \[
        b_k=\sum_{r+s=k} h^{r,s}
    \]
    can also be easily verified in the quasi full rank setting. In this case, we can use the generating polynomials: setting $x=y=t$ in the Hodge polynomial yields
    \[
        H(t,t) = \sum_{r,s} h^{r,s} t^{r+s} = \sum_k \left(\sum_{r+s=k} h^{r,s}\right)t^k.
    \]
    Using Theorem \ref{thm: hodge_generating_function}, we obtain
    \[
        H(t,t) = (1+t)^2(1+4t^2+t^4)^m,
    \]
    which coincides with the Poincaré polynomial $P_{M_{p, \tau}^c}(t)$ obtained in Corollary \ref{cor: poincare_polynomials}. Consequently, $b_k=\sum_{r+s=k} h^{r,s}$ for all $k$.
\end{remark}

\begin{remark}
    Let $p \in \Delta'_n$ be a quasi full rank polynomial of degree $n=2m$. Expanding the generating polynomial
    \[
        Q(x,y) = \bigl(1 + (x+y)^2 + x^2 y^2\bigr)^m,
    \]
    one observes that the combinatorial coefficients $N(r,s)$ vanish whenever $r$ and $s$ have opposite parity; namely,
    \[
        N(r,s)=0 \qquad \text{if } r \not\equiv s \pmod 2.
    \]
    Consequently, the general expression for the Hodge numbers $h^{r,s}$ in \eqref{eq: hodge_formula} simplifies to
    \begin{equation} \label{eq: hodge_explicit_cases}
        h^{r,s} =
        \begin{cases}
            N(r,s) + N(r-1,s-1), & \text{if } r \equiv s \pmod 2, \\[4pt]
            N(r-1,s) + N(r,s-1), & \text{if } r \not\equiv s \pmod 2.
        \end{cases}
    \end{equation}
\end{remark}

\begin{corollary}\label{cor: hodge_porq_zero}
    The Hodge numbers of type $(r,0)$ and $(0,r)$ are given by:
    \begin{equation*}
        h^{r,0} = h^{0,r} = \binom{m}{\lfloor r/2 \rfloor}, \quad \text{for } 0 \le r \le 2m+1.
    \end{equation*}
\end{corollary}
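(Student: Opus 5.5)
The plan is to read off $h^{r,0}$ from the Hodge polynomial of Theorem \ref{thm: hodge_generating_function} by setting $y=0$. By symmetry, or directly from Theorem \ref{thm: summary}(i), it then suffices to treat $h^{r,0}$.

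Since $H(x,y)=\sum_{r,s}h^{r,s}x^ry^s$, the coefficient of $x^r$ in $H(x,0)$ is exactly $h^{r,0}$. Substituting $y=0$ into the formula of Theorem \ref{thm: hodge_generating_function} gives
\[
H(x,0)=(1+x)\bigl(1+x^2\bigr)^m=(1+x)\sum_{j=0}^{m}\binom{m}{j}x^{2j},
\]
because $1+(x+y)^2+x^2y^2$ reduces to $1+x^2$ at $y=0$. Multiplying out, the even powers $x^{2j}$ and the odd powers $x^{2j+1}$ both carry the coefficient $\binom{m}{j}$. In other words, $h^{r,0}=\binom{m}{\lfloor r/2\rfloor}$ for $0\le r\le 2m+1$.

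As a cross-check, the same result follows from \eqref{eq: hodge_explicit_cases}. With $s=0$, every term involving $N(\cdot,-1)$ vanishes. For even $r$ this leaves $h^{r,0}=N(r,0)$, and for odd $r$ it leaves $h^{r,0}=N(r-1,0)$. The quantity $N(2j,0)$ counts subsets $I$ with zero parameter sum and $J=\emptyset$. By quasi full rank these are unions of $j$ complete pairs, so $N(2j,0)=\binom{m}{j}$, in agreement with Proposition \ref{prop: betti_reciprocal}'s count of $\ell_{2j}$.

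For $h^{0,r}$, the polynomial $H(x,y)$ is visibly symmetric under $x\leftrightarrow y$, so $h^{0,r}=h^{r,0}$. The only point needing care is the range of $r$. For $r>2m+1$ the coefficient is $0$, which is consistent because $\binom{m}{\lfloor r/2\rfloor}=0$ when $\lfloor r/2\rfloor>m$. The argument is a routine coefficient extraction, and I see no real obstacle.
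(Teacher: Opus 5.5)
Your proof is correct and essentially the paper's argument: both set $y=0$ in the generating function. The paper goes through $Q(x,0)=(1+x^2)^m$ and the parity formula \eqref{eq: hodge_explicit_cases}, which is exactly your cross-check, while your main route reads the coefficients directly off $H(x,0)=(1+x)(1+x^2)^m$ and settles $h^{0,r}=h^{r,0}$ by the same symmetry.
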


\begin{proof}
Setting $y=0$ gives $Q(x,0) = (1 + x^2)^m$, hence $N(r,0)=\binom{m}{r/2}$ if $r$ is even and $0$ otherwise. From \eqref{eq: hodge_explicit_cases} with $q=0$, we obtain $h^{r,0}=N(r,0)$ for $r$ even and $h^{r,0}=N(r-1,0)$ for $r$ odd, which yields $h^{r,0}=\binom{m}{\lfloor r/2 \rfloor}$. The identity for $h^{0,r}$ follows by symmetry.
\end{proof}

\begin{example}
    Let $p \in \Delta'_4$ be a quasi full rank polynomial. Then, expanding the Hodge polynomial $H(x,y)$ from Theorem \ref{thm: qfr_generating_function} with $m=2$ we obtain: 
    \begin{align*}
        H(x,y) &= x^5 y^5 + x^5 y^4 + 2 x^5 y^3 + 2 x^5 y^2 + x^5 y + x^5 \\
        & \phantom{=} + x^4 y^5 + 5 x^4 y^4 + 6 x^4 y^3 + 6 x^4 y^2 + 5 x^4 y + x^4 \\
        & \phantom{=} + 2 x^3 y^5 + 6 x^3 y^4 + 12 x^3 y^3 + 12 x^3 y^2 + 6 x^3 y + 2 x^3 \\
        & \phantom{=} + 2 x^2 y^5 + 6 x^2 y^4 + 12 x^2 y^3 + 12 x^2 y^2 + 6 x^2 y + 2 x^2 \\
        & \phantom{=} + x y^5 + 5 x y^4 + 6 x y^3 + 6 x y^2 + 5 x y + x \\
        & \phantom{=} + y^5 + y^4 + 2 y^3 + 2 y^2 + y + 1.
    \end{align*}
    Thus, the Hodge diamond for the manifold $M_{p, \tau}^c$ is given by:
    \[
\begin{array}{ccccccccccc}
&&&&& 1 &&&&& \\
&&&& 1 && 1 &&&& \\
&&& 2 && 5 && 2 &&& \\
&& 2 && 6 && 6 && 2 && \\
& 1 && 6 && 12 && 6 && 1 & \\
1 && 5 && 12 && 12 && 5 && 1 \\
& 1 && 6 && 12 && 6 && 1 & \\
&& 2 && 6 && 6 && 2 && \\
&&& 2 && 5 && 2 &&& \\
&&&& 1 && 1 &&&& \\
&&&&& 1 &&&&&
\end{array}
\]
    
\end{example}

\section{Other examples}

Throughout this section, we freely identify subsets of indices with the corresponding sub-multisets of logarithmic parameters, as in the previous sections. Accordingly, $\ell_k$ denotes the number of sub-multisets of cardinality $k$ whose elements sum to zero.

\subsection{Example with a polynomial that does not satisfy the full rank condition}

We construct a polynomial $p \in \Delta'_5$ that does not satisfy the full rank condition. In degree $5$, the only possible non-trivial multiplicative relations (besides the total product) have lengths $2$ and $3$. The following example realizes exactly one relation of each type.

Let $p_1(x)=x^2-3x+1 \in \Delta'_2$, whose roots form a set $A=\{r_1,r_2\}$ satisfying $r_1r_2=1$; and $p_2(x)=x^3-5x^2+6x-1 \in \Delta'_3$, with roots $B=\{r_3,r_4,r_5\}$ satisfying $r_3r_4r_5=1$.
Define
\[
p(x) = p_1(x)p_2(x) = x^5 - 8x^4 + 22x^3 - 24x^2 + 9x - 1.
\]

Since $p_1$ is irreducible over $\Q$ and does not divide $p_2$, the polynomials $p_1$ and $p_2$ are coprime. Hence,
\[
A\cap B=\emptyset.
\]

We now determine the integers $\ell_k$ associated with the solvmanifold $M_p$:
\begin{itemize}
    \item \textbf{Lengths $k=1$ and $k=4$.}
    Since $p \in \Delta'_5$, $p(1) = 1\neq 0$ and $\ell_1 = 0$. By taking complements, it follows that $\ell_4 = 0$.

    \item \textbf{Lengths $k=2$ and $k=3$.}
    The sets $A$ and $B$ provide one relation of lengths $2$ and $3$, respectively. We show that mixed relations do not occur.

    Suppose first that
    \[
    xy=1,
    \qquad x\in A,\ y\in B.
    \]
    Then $y=x^{-1}$. Since $p_1$ is self-reciprocal, we have $x^{-1}\in A$, and hence $y \in A \cap B$, contradicting $A\cap B = \emptyset$. Therefore, there are no mixed relations of length $2$.

    For relations of length $3$, there are two possibilities:
    \begin{enumerate}
        \item [(i)] If two roots belong to $A$ and one root belongs to $B$, then $r_1 r_ 2y = 1$. But $r_1 r_2 = 1$, so $y = 1$, contradicting the fact that $1\notin B$.

        \item [(ii)] If one root belongs to $A$ and two roots belong to $B$, then
        \[
        x(yz)=1,
        \qquad x\in A,\ y,z\in B.
        \]
        Let $w\in B$ be the remaining root of $p_2$. Since $yzw = 1$, we obtain $yz = w^{-1}$. Substituting into the previous relation gives $xw^{-1} = 1$, and therefore $x=w$, again contradicting $A\cap B=\emptyset$.
    \end{enumerate}

    Hence, there are no mixed relations of length $3$. Consequently, $\ell_2=\ell_3=1$.
\end{itemize}

The only nonzero values are $\ell_0=\ell_2=\ell_3=\ell_5=1$. Using Lemma \ref{lemma: betti_combinatorial}, $b_k=\ell_k+\ell_{k-1}$, the Betti numbers of the $6$-dimensional basic solvmanifold $M_p$ are
\[
b(M_p)=(1,1,1,2,1,1,1).
\]

We now compute the Betti numbers of the associated complex and hypercomplex solvmanifolds using generating polynomials.

For the 12-dimensional complex solvmanifold $M_p^c$, each root has multiplicity 2. The relations coming from the sets $A$ and $B$ are encoded by
\begin{align*}
L_A(t)
    &=\sum_{i=0}^{2}\binom{2}{i}^2 t^{2i}
      =1+4t^2+t^4,\\
L_B(t)
    &=\sum_{j=0}^{2}\binom{2}{j}^3 t^{3j}
      =1+8t^3+t^6.
\end{align*}

The coefficients $\ell_k $ of the product $L_A(t)L_B(t)$ determine the Betti numbers of the associated 11-dimensional almost abelian Lie algebra $\tilde{\g}_p$ through Lemma \ref{lemma: betti_combinatorial}. Then, using the identity \eqref{eq: kunneth_complex} derived from the decomposition $\g_p^c = \tilde{\g}_p \times \R f$, we obtain
    \begin{equation*}
        b(M_p^c) = (1, 2, 5, 16, 21, 42, 66, 42, 21, 16, 5, 2, 1).
    \end{equation*}

For the 24-dimensional basic hypercomplex solvmanifold $M_p^{h}$, each root appears with multiplicity $4$. The corresponding generating functions are
\begin{align*}
L_A(t)
    &=\sum_{i=0}^{4}\binom{4}{i}^2 t^{2i}
      =1+16t^2+36t^4+16t^6+t^8,\\
L_B(t)
    &=\sum_{j=0}^{4}\binom{4}{j}^3 t^{3j}
      =1+64t^3+216t^6+64t^9+t^{12}.
\end{align*}
Let $\ell_k$ denote the coefficients of $L_A(t)L_B(t)$. Then $b_k(\bar{\g}_p)=\ell_k+\ell_{k-1}$, where $\bar{\g}_p$ is the associated 21-dimensional almost abelian Lie algebra. Using formula \eqref{eq: kunneth_hypercomplex}, we obtain
\begin{multline*}
b(M_p^{h})=
(1,4,22,132,389,1616,4816,9584,18197,30692,42318,54852, 62274, \\
54852,42318,30692,18197,9584,4816,1616,389,132,22,4,1).
\end{multline*}

Let us now compare these values with the full rank case for polynomials in $\Delta'_5$. For the complex case, Theorem \ref{thm: cohomology_complex} gives
\[
(1,2,1,0,0,32,64,32,0,0,1,2,1).
\]
In contrast, the additional multiplicative relations in the present example produce nonzero Betti numbers in every degree.

Similarly, in the hypercomplex case, Theorem \ref{thm: cohomology_hyp} gives Betti numbers concentrated in separated blocks of length $5$, with
\[
b_{12}=46656.
\]
In our example, all Betti numbers are nonzero and the middle Betti number increases to
\[
b_{12}=62274.
\]

Finally, the Hodge numbers of the complex solvmanifold $M_{p, \tau}^c$ with $\tau$ satisfying condition \eqref{eq: cIJ} can be found in the following diamond: 

\[
\begin{array}{ccccccccccccc}
&&&&&& 1 &&&&&& \\
&&&&& 1 && 1 &&&&& \\
&&&& 1 && 3 && 1 &&&& \\
&&& 2 && 6 && 6 && 2 &&& \\
&& 1 && 5 && 9 && 5 && 1 && \\
& 1 && 6 && 14 && 14 && 6 && 1 & \\
1 && 6 && 15 && 22 && 15 && 6 && 1 \\
& 1 && 6 && 14 && 14 && 6 && 1 & \\
&& 1 && 5 && 9 && 5 && 1 && \\
&&& 2 && 6 && 6 && 2 &&& \\
&&&& 1 && 3 && 1 &&&& \\
&&&&& 1 && 1 &&&&& \\
&&&&&& 1 &&&&&&
\end{array}
\]

\begin{remark}
    Similar examples can also be constructed in degree $6$. In particular, we have found polynomials in $\Delta'_6$ exhibiting each of the possible configurations of non-trivial multiplicative relations in the intermediate degrees. More precisely, there exist examples with:
\begin{itemize}
    \item only relations of length $3$;
    \item one relation of length $2$ together with one relation of length $4$;
    \item relations of lengths $2$, $3$, and $4$ simultaneously.
\end{itemize}
In each case, these relations occur in addition to the trivial relations of lengths $0$ and $6$.
\end{remark}

\subsection{Example with a self-reciprocal polynomial that does not satisfy the quasi full rank condition}

To illustrate the effect of arithmetic dependencies among the logarithmic parameters, we consider a self-reciprocal polynomial of degree $2$ of the form
\[
    h_m(x)=x^2-mx+1,
\]
with $m\geq 3$. For instance, let us take
\[
    h_3(x)=x^2-3x+1,
\]
and denote its roots by $r$ and $r^{-1}$.

We next construct two auxiliary polynomials whose roots are the squares and cubes of the roots of $h_3$. Recall that if
\[
    (x-r_1)\cdots(x-r_n)\in\mathbb{Z}[x],
\]
then
\[
    (x-r_1^k)\cdots(x-r_n^k)\in\mathbb{Z}[x]
\]
for every $k\in\mathbb{Z}$. A direct computation shows that the polynomial with roots $r^2$ and $r^{-2}$ is
\[
    h_7(x)=x^2-7x+1,
\]
while the polynomial with roots $r^3$ and $r^{-3}$ is
\[
    h_{18}(x)=x^2-18x+1.
\]

We define the degree $6$ polynomial
\begin{equation}\label{eq: dim6}
  p(x)=h_3(x)h_7(x)h_{18}(x).  
\end{equation}
The associated logarithmic parameters are
\[
    \log(r), \qquad \log(r^{-1}), \qquad \log(r^2), \qquad \log(r^{-2}), \qquad \log(r^3), \qquad \log(r^{-3})
\]
which are all integer multiples of $\log(r)$. Hence, vanishing sums among the logarithmic parameters reduce to integer relations inside the set
\[
    \mathcal{M}=\{\pm1,\pm2,\pm3\}.
\]
Such arithmetic dependencies generate additional zero-sum subsets, producing extra cohomology classes and therefore increasing the Betti numbers relative to the quasi full rank case.

To compute the Betti numbers of the $7$-dimensional basic solvmanifold $M_p$, we first determine the number $\ell_k$ of zero-sum subsets of $\mathcal{M}$ of cardinality $k$. Besides the trivial cancellations arising from pairs $\{m_i,-m_i\}$, the relation
\[
    1+2-3=0
\]
gives rise to two non-trivial zero-sum subsets of size three:
\[
    \{1,2,-3\},
    \qquad
    \{-1,-2,3\}.
\]
By taking complements, we obtain two additional zero-sum subsets of size four. A direct computation yields
\[
    \ell_0=1,\qquad
    \ell_1=0,\qquad
    \ell_2=3,\qquad
    \ell_3=2,\qquad
    \ell_4=3,\qquad
    \ell_5=0,\qquad
    \ell_6=1.
\]

Applying Lemma \ref{lemma: betti_combinatorial}, namely
\[
    b_k=\ell_k+\ell_{k-1},
\]
we obtain the Betti numbers of $M_p$:
\[
    b(M_p) = (1,1,3,5,5,3,1,1).
\]
This should be contrasted with the quasi full rank setting, where the absence of arithmetic dependencies would instead yield
\[
    b_3=b_4=3.
\]

Although the zero-sum subsets of $\mathcal{M}$ were counted directly in this low-dimensional example, the procedure can be systematically encoded using generating functions. This method becomes essential in higher dimensions.

For each pair $\{\pm m_i\}$, there are four possible local choices in the construction of a subset:
\[
    \emptyset,
    \qquad
    \{m_i\},
    \qquad
    \{-m_i\},
    \qquad
    \{m_i,-m_i\}.
\]
We encode the sum of the selected elements using the exponent of a variable $x$, and the cardinality of the subset using the exponent of a variable $y$. This leads to the local generating function
\[
    1+yx^{m_i}+yx^{-m_i}+y^2
    =
    (1+yx^{m_i})(1+yx^{-m_i}).
\]
Taking the product over all indices yields the global generating function
\begin{equation*}\label{eq: H_generating}
    H(x,y) = \prod_{i=1}^{3}(1+yx^i)(1+yx^{-i}) = \sum_{C\subseteq\mathcal{M}} x^{\sum_{c\in C}c} y^{|C|}.
\end{equation*}

The number $\ell_k$ of zero-sum subsets of cardinality $k$ is precisely the coefficient of $x^0y^k$ in the expansion of $H(x,y)$. Moreover, by Lemma \ref{lemma: betti_combinatorial}, the relation
\[
    b_k=\ell_k+\ell_{k-1}
\]
is encoded algebraically by multiplication with $(1+y)$, which shifts the degree in the variable $y$. Thus, the Betti numbers of $M_p$ are obtained as the coefficients of $x^0y^k$ in
\[
    U_p(x,y) = (1+y)H(x,y).
\]

Extracting these coefficients recovers exactly the sequence computed above:
\[
    (b_0,\dots,b_7) = (1,1,3,5,5,3,1,1).
\]

\subsubsection*{The complex and hypercomplex solvmanifolds}

The construction extends naturally to the complex and hypercomplex solvmanifolds by modifying the multiplicities of the logarithmic parameters and the corresponding recurrence relations.

For the complex solvmanifold $M_p^c$ of dimension $14$, with $p$ as in \eqref{eq: dim6}, each parameter appears with multiplicity two. Consequently, the local generating factors become
\[
    (1+yx^{\pm m_i})^2,
\]
while the recurrence relation \eqref{eq: kunneth_complex} contributes a global factor of $(1+y)^2$. Hence, the Betti numbers of $M_p^c$ are encoded by the generating function
\[
    U_p^c(x,y) = (1+y)^2H(x,y)^2,
\]
where $b_k^c$ is obtained as the coefficient of $x^0y^k$.

In the present example, we obtain
\begin{align*}
[x^0](U_p^c(x,y)) &= y^{14} +2y^{13} +13y^{12} +44y^{11} +111y^{10} +206y^9 +301y^8 \\
&\quad
+348y^7 +301y^6 +206y^5 +111y^4 +44y^3 +13y^2 +2y +1,
\end{align*}
and therefore
\[
    b(M_p^c) = (1,2,13,44,111,206,301,348,301,206,111,44,13,2,1).
\]

Similarly, for the hypercomplex solvmanifold $M_p^h$ of dimension $28$, with the same $p$, each parameter appears with multiplicity four. The local factors are therefore replaced by
\[
    (1+yx^{\pm m_i})^4,
\]
and the recurrence relation \eqref{eq: kunneth_hypercomplex} contributes a factor of $(1+y)^4$. Thus, the Betti numbers of $M_p^h$ are determined by
\[
    U_p^h(x,y) = (1+y)^4H(x,y)^4,
\]
with $b_k^h$ given by the coefficients of $x^0y^k$.

For the polynomial $p(x)$ considered above, this yields
\begin{align*}
[x^0](U_p^h(x,y)) &= y^{28} +4y^{27} +54y^{26} +372y^{25} +2093y^{24} +9184y^{23} +33092y^{22} +98912y^{21} \\
& \phantom{=} +249295y^{20} +536940y^{19} +995802y^{18} +1600156y^{17} +2238317y^{16} \\
& \phantom{=} +2734696y^{15} +2923220y^{14} +2734696y^{13} +2238317y^{12} +1600156y^{11} \\
& \phantom{=} +995802y^{10} +536940y^9 +249295y^8 +98912y^7 +33092y^6 +9184y^5 \\
& \phantom{=} +2093y^4 +372y^3 +54y^2 +4y +1.
\end{align*}

Hence,
\begin{align*}
b(M_p^h) = (&1,4,54,372,2093,9184,33092,98912,249295,536940,995802,1600156,\\
&2238317,2734696,2923220,\dots,1).
\end{align*}

\subsubsection*{Dolbeault cohomology of the complex solvmanifold}

When $\tau$ satisfies condition \eqref{eq: cIJ}, the Hodge numbers of the complex solvmanifold $M_{p,\tau}^{c}$ are given by the following Hodge diamond:

\[
\begin{array}{ccccccccccccccc}
&&&&&&& 1 &&&&&&& \\
&&&&&& 1 && 1 &&&&&& \\
&&&&& 3 && 7 && 3 &&&&& \\
&&&& 5 && 17 && 17 && 5 &&&& \\
&&& 5 && 27 && 47 && 27 && 5 &&& \\
&& 3 && 27 && 73 && 73 && 27 && 3 && \\
& 1 && 17 && 73 && 119 && 73 && 17 && 1 & \\
1 && 7 && 47 && 119 && 119 && 47 && 7 && 1 \\
& 1 && 17 && 73 && 119 && 73 && 17 && 1 & \\
&& 3 && 27 && 73 && 73 && 27 && 3 && \\
&&& 5 && 27 && 47 && 27 && 5 &&& \\
&&&& 5 && 17 && 17 && 5 &&&& \\
&&&&& 3 && 7 && 3 &&&&& \\
&&&&&& 1 && 1 &&&&&& \\
&&&&&&& 1 &&&&&&&
\end{array}
\]

\section{Appendix}

In this appendix we prove, using tools from number theory, that for each $n \geq 2$, the set $\Delta_n$ contains polynomials which either satisfy the full rank or the quasi full rank condition.

\begin{theorem}\label{thm:existence}
For any $n\in\N$, $n\geq 2$, there exists a full rank polynomial $p\in \Delta'_n$.
\end{theorem}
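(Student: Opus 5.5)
We will produce, for each $n\geq 2$, an explicit polynomial in $\Delta_n$ whose largest root is a Pisot number, i.e.\ which is irreducible over $\Q$, has all roots real, positive and distinct, has exactly one root greater than $1$, and has the remaining $n-1$ roots in $(0,1)$. Once this is achieved, Proposition \ref{prop: tracy}$\ri$ gives the full rank condition, and, as noted in the Remark following Definition \ref{def: full_rank}, full rank forces $p(1)\neq 0$, so $p\in\Delta'_n$.

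\textbf{Construction.} Fix distinct integers $0<a_1<\cdots<a_{n-1}$, say $a_j=j$, and a large integer $N$. Set
\[
p_N(x)=(x-N)\prod_{j=1}^{n-1}\Bigl(x-\tfrac{a_j}{N}\Bigr)\cdot N^{n-1}\ \text{(suitably corrected)} ,
\]
or, more cleanly, start from $q(x)=x\prod_{j=1}^{n-1}(Nx-a_j)$ and define
\[
p(x)=x^n-N' x^{n-1}+\text{(lower terms forcing } p(0)=(-1)^n),
\]
with the coefficients chosen so that $p(x)/x^{n-1}\approx x-N'$ near $x=N'$, while on the interval $(0,1)$ the polynomial $p$ is a small perturbation of a polynomial having $n-1$ sign changes. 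A concrete choice is
\[
p(x)=x\prod_{j=1}^{n-1}(x-N_j)\cdot(\text{reversed})\ \ \leadsto\ \ p=\bigl(\textstyle\prod_{j=1}^{n-1}(x-b_j)\bigr)(x-M)+(-1)^n\bigl(1-M\prod b_j\bigr)\cdot\epsilon ,
\]
where the $b_j\in(0,1)$ are approximated by roots of the reciprocal of a polynomial with integer roots. The cleanest route is the reversed polynomial: take $g(x)=\prod_{j=1}^{n-1}(x-c_j)$ with distinct integers $c_j\geq 2$, and set
\[
f(x)=x\,g(x)-(-1)^{n}\cdot 1\cdot\sigma ,
\]
a monic integer polynomial of degree $n$ with constant term $\pm1$, with the sign $\sigma=\pm1$ chosen so that $f(0)=(-1)^n$. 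Since $|x g(x)|$ changes sign between consecutive integers $0,c_1,\ldots,c_{n-1}$ and has large absolute value at half-integers, the intermediate value theorem gives $n$ distinct real roots: one in $(0,1)$ (near $0$) and one near each $c_j>1$. Then $p=f^*$ lies in $\Delta_n$, has exactly one root greater than $1$ (the reciprocal of the small root of $f$), and all other roots lie in $(0,1)$. Positivity of all roots of $f$ must be checked, which will dictate the sign of the correction term; if necessary we replace the constant $1$ by a perturbation of the form $x g(x)+\sigma$ and adjust the choice of the $c_j$ (spacing them far apart) to ensure the root near $0$ is positive.

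\textbf{Main obstacle: irreducibility.} Proposition \ref{prop: tracy} requires $p$ to be irreducible. We will argue that any factor of $p$ in $\Z[x]$ is monic with constant term $\pm1$, so it has a root of modulus at least $1$; but $p$ has exactly one root of modulus greater than $1$ and none of modulus $1$. Hence the complementary factor would have all its roots in $(0,1)$, so its constant term would have absolute value strictly less than $1$, contradicting that it is a nonzero integer. This Pisot-type argument settles irreducibility, and we then conclude by Proposition \ref{prop: tracy}$\ri$. The delicate step is to verify rigorously the sign pattern of $f$ at the chosen test points, so as to guarantee that all $n$ roots are real, distinct and positive; we expect that this forces $c_j$ such as $c_j=3j$.
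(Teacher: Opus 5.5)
Your strategy works: build an explicit polynomial with exactly one root greater than $1$, prove irreducibility, then invoke Proposition \ref{prop: tracy}$\ri$. Your irreducibility argument is correct. The gap is the step the construction rests on: that $f$ has $n$ distinct positive real roots, exactly one in $(0,1)$. You leave this as an expectation, and the sketched justification is wrong as stated.

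First, discard the false starts. There is also no sign to choose: $f(0)=(-1)^n$ forces $f(x)=xg(x)+(-1)^n$, since the other sign would make the product of the roots equal to $-1$.

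Second, ``distinct integers $c_j\geq 2$'' fails for $n=2$, $c_1=2$, where $f=(x-1)^2$. The claim that $|xg(x)|$ is large at half-integers also fails, e.g. $|xg(x)|=5/8$ at $x=5/2$ for $(c_1,c_2)=(2,3)$.

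With your suggested $c_j=3j$ the check is short:
\begin{itemize}
\item Put $F(x)=xg(x)=\prod_{i=0}^{n-1}(x-3i)$, so $f=F+(-1)^n$ and $f(3i)=(-1)^n$ for all $i$.
\item For $m_j=3j+\tfrac32$ with $0\le j\le n-2$, one has $|F(m_j)|=\prod_i 3\,|j-i+\tfrac12|\geq(\tfrac32)^n>1$, and $F(m_j)$ has sign $(-1)^{n-1-j}$.
\item Hence for each even $j\le n-2$, $f$ changes sign on $(3j,m_j)$ and on $(m_j,3j+3)$. For odd $n$ it also changes sign on $(3n-3,\infty)$, since $f(3n-3)=-1$.
\item Since $|F(1)|=\prod_{i=1}^{n-1}(3i-1)\geq 2$ and $F(1)$ has sign $(-1)^{n-1}$, you may replace $(0,m_0)$ by $(0,1)$.
\end{itemize}
This places $n$ roots in pairwise disjoint intervals. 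So all roots of $f$ are real, simple and positive, with one in $(0,1)$ and the rest in $(\tfrac32,\infty)$. Then $p=f^*\in\Delta_n$ has exactly one root $>1$ and none equal to $1$, and your argument finishes; for instance $n=3$ gives $p=x^3-18x^2+9x-1$.

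Once this is filled in, your proof is a genuinely different route from the paper's. The paper never writes $p$ down. It takes a totally real Galois field $K$ of degree $n$ and applies Dirichlet's unit theorem to its totally positive units. A lattice-point lemma then gives a unit whose logarithmic embedding lies in the open cone $\{x_1>0,\ x_i<0\ (i\geq 2)\}$ and off the hyperplanes $x_j=x_k$. Its minimal polynomial is the desired $p$, so irreducibility is automatic.

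Both proofs end at the same criterion, Proposition \ref{prop: tracy}$\ri$. The paper's route is more conceptual and yields infinitely many examples inside any such field, but it relies on Dirichlet's theorem and on the existence of totally real Galois fields of every degree. Yours needs only the intermediate value theorem and the norm argument for irreducibility. It also gives an explicit infinite family: reversing $\prod_{i=0}^{n-1}(x-ki)+(-1)^n$ works the same way for any integer $k\geq 3$.
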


\begin{proof}
We will show that there exists a polynomial $p\in\Delta'_n$ that has exactly one root greater than 1 and all the other roots are in the interval $(0,1)$. This implies that $p$ has full rank, according to Proposition \ref{prop: tracy}$\ri$. The proof follows the lines of an argument given in \cite[Appendix]{De}.

It follows from \cite[Theorem 2.3]{DV} that there exists a totally real Galois extension $K$ of $\Q$ such that $[K:\Q]=n$. Let $\sigma_1, \sigma_2, \dots, \sigma_n \colon K \hookrightarrow \R$ denote the $n$ distinct real embeddings of $K$ into $\R$.

Let $\mathcal{O}_K^\times$ be the group of units of the ring of integers of $K$. We consider the subgroup of \textit{totally positive units}, given by
\[ \mathcal{O}_K^{\times, +} = \{ u \in \mathcal{O}_K^\times \colon \sigma_i(u) > 0 \text{ for all } i = 1, \dots, n \}. \]
By Dirichlet's Unit Theorem, the rank of $\mathcal{O}_K^\times$ is $n-1$. Since $\mathcal{O}_K^{\times, +}$ is the kernel of the signature homomorphism $s: \mathcal{O}_K^\times \to (\mathbb{Z}/2\mathbb{Z})^n$, it is a subgroup of finite index. Consequently, the rank of $\mathcal{O}_K^{\times, +}$ is also $n-1$.

Consider the Minkowski logarithmic map $L: \mathcal{O}_K^{\times, +} \to \mathbb{R}^n$ defined by:
\[ L(u) = (\log \sigma_1(u), \log \sigma_2(u), \dots, \log \sigma_n(u)) \]
The image $\Lambda = L(\mathcal{O}_K^{\times, +})$ is a lattice of rank $n-1$ embedded in the hyperplane $H \subset \mathbb{R}^n$ defined by the condition
\[ H = \{ (x_1, \dots, x_n) \in \mathbb{R}^n \colon \sum_{i=1}^n x_i = 0 \}. \]
This follows from the fact that for any $u \in \mathcal{O}_K^{\times, +}$, the norm $N(u) = \prod \sigma_i(u) = 1$, so $\sum \log \sigma_i(u) = \log(1) = 0$.

We seek a unit $u$ such that $\sigma_1(u) > 1$ and $0 < \sigma_i(u) < 1$ for $i=2, \dots, n$. In $\R^n$, this corresponds to the open cone $C$ defined by:
\[ C = \{ (x_1, \dots, x_n) \in \mathbb{R}^n \colon x_1 > 0 \text{ and } x_i < 0 \text{ for } i = 2, \dots, n \} \]
The intersection $C \cap H$ is a non-empty open cone in the $(n-1)$-dimensional space $H$. Since $\Lambda$ is a lattice of full rank in $H$, it follows from \cite[Lemma in the Appendix]{De} that $C\cap H$  must contain a point $w \in \Lambda$. By the definition of a cone, for any $k \in \mathbb{N}$, the point $k \cdot w$ also belongs to $\Lambda \cap (C \cap H)$. Thus, there exist infinitely many units $u \in \mathcal{O}_K^{\times, +}$ such that $L(u) \in C \cap H$.

To ensure that the roots are distinct, the lattice point $L(u)$ must avoid the symmetry hyperplanes defined by $x_j = x_k$ for $j \neq k$. Let $C' = (C \cap H) \setminus \bigcup_{j < k} \{x \in H \colon x_j = x_k\}$. Since we are removing a finite union of closed hyperplanes of codimension 1 from the open cone $C \cap H$, the set $C'$ remains a non-empty open subset of $H$. Assume that $\{w_1,\ldots,w_{n-1}\}$ is a basis of $H$ such that $\Lambda=\bigoplus_{i=1}^{n-1} \Z w_i$. Now, choose $v\in C'$, and write it as $v=\sum_i a_iw_i$ for some $a_i\in \R$. Since $C'$ is open, we can choose $q_i\in \Q$ sufficiently close to $a_i$, so that $v':=\sum_i q_i w_i$ is still in $C'$. Then, for some $M\in \N$ such that $Mq_i\in \Z$ for all $i$, we have that $w:=Mv'\in \Lambda \cap C'$.
By construction, the coordinates of $w$ (and consequently of all its positive integral multiples $k \cdot w$) are strictly distinct. Choosing $L(u) = w$ guarantees that the resulting unit $u$ has $n$ distinct conjugates.

Let $p$ be the minimal polynomial of this unit $u$, which has degree $n$. Then:
\begin{itemize}
    \item $p$ is monic with integer coefficients because $u$ is an algebraic integer.
    \item all roots are real and positive because $u$ is totally positive.
    \item all roots are distinct by the choice of $u$.
    \item the product of the roots is $N(u) = 1$.
    \item exactly one root $\sigma_1(u)$ is greater than 1, and the others are in $(0, 1)$ according to the definition of the cone $C$.
\end{itemize}
Thus, $p \in\Delta'_n$ and satisfies the required properties.
\end{proof}

\begin{example}
We exhibit next some examples of polynomials $p\in \Delta'_n$ satisfying the full rank condition:
\begin{align*}
p_4(x) &= x^4 -13x^3 +18x^2 -8x +1 \\
p_5(x) &= x^5 - 9x^4 + 26x^3 - 29x^2 + 11x - 1 \\
p_6(x) &= x^6 - 11x^5 + 42x^4 - 68x^3 + 46x^2 - 12x + 1 \\
p_7(x) &= x^7 - 13x^6 + 61x^5 - 131x^4 + 136x^3 - 66x^2 + 14x - 1.
\end{align*}
Indeed, $p_4$ satisfies Proposition \ref{prop: tracy}$\ri$, while $p_6$ satisfies Proposition \ref{prop: tracy}$\rii$ (it can be verified that its Galois group is $S_6$, which acts doubly transitively on its roots). On the other hand, the fact that $p_5$ and $p_7$ have full rank follows from Proposition \ref{prop: tracy}$\riii$.  
\end{example}

\medskip

In order to verify the existence of polynomials in $\Delta_n'$ satisfying the quasi full rank condition, we recall the following definition.

\begin{definition}\label{def: linearly_disjoint}
The quadratic extensions $\Q(\sqrt{q_1}),\ldots, \Q(\sqrt{q_n})$ with $q_i\in \N$ are said to be \textit{linearly disjoint} if
\[
[\Q(\sqrt{q_1}, \ldots, \sqrt{q_n}) \colon \Q]
= \prod_{i=1}^n [\Q(\sqrt{q_i}) \colon \Q]=2^n.
\]
Equivalently, for every nonempty subset $I \subset \{1,\dots,n\}$, the product $\prod_{i \in I} q_i$ is not a square in $\N$.
\end{definition}

\begin{theorem} \label{thm:disjoint_extensions}
    Let $\{h_{k_i}(x) = x^2 - k_i x + 1\}_{i=1}^m$ be a set of self-reciprocal polynomials with $k_i \in \N, k_i \ge 3$. Let $r_i > 1$ be the largest root of $h_{k_i}$ for each $i$, and let $s_i = \log r_i$ be the corresponding logarithmic parameters. If the quadratic extensions of $\Q$, $K_i = \Q(\sqrt{k_i^2 - 4})$, are linearly disjoint over $\Q$, then the polynomial $p = \prod_{i=1}^m h_{k_i}$ lies in $\Delta'_{2m}$ and has quasi full rank.
\end{theorem}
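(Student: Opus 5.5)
First I will check that $p\in\Delta'_{2m}$. Each $h_{k_i}$ is monic with integer coefficients, has discriminant $k_i^2-4>0$, and its two roots $r_i, r_i^{-1}$ are positive because their sum $k_i$ and product $1$ are positive. Since $h_{k_i}(1)=2-k_i\neq 0$, we have $r_i\neq 1$, so $r_i>1>r_i^{-1}$. Thus $p$ is monic of degree $2m$, satisfies $p(0)=1=(-1)^{2m}$, and $p(1)=\prod(2-k_i)\neq 0$.

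It remains to see that the $2m$ roots are distinct. The $r_i$ lie in the fields $K_i=\Q(\sqrt{k_i^2-4})$, because $r_i=(k_i+\sqrt{k_i^2-4})/2$. Linear disjointness forces $k_i^2-4$ to be a non-square, since the case $I=\{i\}$ of Definition \ref{def: linearly_disjoint} applies. Hence each $h_{k_i}$ is irreducible and $r_i\notin\Q$. If $r_i=r_j^{\pm1}$ with $i\neq j$, then $K_i=K_j$, contradicting disjointness via $I=\{i,j\}$. So the roots are distinct. Self-reciprocity of $p$ is clear, since it is a product of self-reciprocal factors.

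For the quasi full rank condition, I will use the multiplicative form \eqref{eq: qfr_multiplicative_r}: suppose $r_1^{a_1}\cdots r_m^{a_m}=1$ with $a_i\in\Z$. The plan is to apply Galois automorphisms of $L=\Q(\sqrt{d_1},\ldots,\sqrt{d_m})$, where $d_i=k_i^2-4$. By linear disjointness, $\operatorname{Gal}(L/\Q)\cong(\Z/2)^m$. For each $j$ there is an automorphism $\sigma_j$ that negates $\sqrt{d_j}$ and fixes all the other $\sqrt{d_i}$. This $\sigma_j$ sends $r_j\mapsto r_j^{-1}$ and fixes $r_i$ for $i\neq j$. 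Applying $\sigma_j$ to the relation gives $r_j^{-a_j}\prod_{i\neq j}r_i^{a_i}=1$. Dividing the two relations yields $r_j^{2a_j}=1$. Since $r_j>1$ is real, this forces $a_j=0$. Doing this for each $j$ shows all $a_j=0$, which is exactly the quasi full rank condition.

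The main point requiring care is the existence of $\sigma_j$. I would justify it from the degree count $[L:\Q]=2^m$: the restriction map $\operatorname{Gal}(L/\Q)\to\prod_i\operatorname{Gal}(K_i/\Q)$ is injective, because $L$ is generated by the $K_i$. Both groups have order $2^m$, so the map is surjective, and any sign pattern on the $\sqrt{d_i}$ is realized. I also need the equivalence of the two forms of the quasi full rank definition, which is immediate: a relation $\prod r_i^{c_i}(r_i^{-1})^{c_i'}=1$ becomes $\prod r_i^{c_i-c_i'}=1$.
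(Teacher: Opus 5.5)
Your proof is correct and follows essentially the same argument as the paper: you apply the automorphism $\sigma_j$ of $L=\Q(\sqrt{k_1^2-4},\ldots,\sqrt{k_m^2-4})$ that inverts $r_j$ and fixes the other $r_i$ to the relation $r_1^{a_1}\cdots r_m^{a_m}=1$, then divide to get $r_j^{2a_j}=1$. Your extra checks fill in details the paper dismisses as clear, namely the distinctness of the $2m$ roots via linear disjointness and the existence of $\sigma_j$ via the injective restriction map $\operatorname{Gal}(L/\Q)\to\prod_i\operatorname{Gal}(K_i/\Q)$ between groups of order $2^m$.
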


\begin{proof}
It is clear that $p \in \Delta'_{2m}$ is self-reciprocal. We now show that $p$ satisfies the quasi full rank condition, i.e., that any multiplicative relation as in \eqref{eq: qfr_multiplicative_r}
\begin{equation} \label{eq:mult_relation}
    r_1^{a_1} r_2^{a_2} \cdots r_m^{a_m} = 1
\end{equation}
forces $a_i = 0$ for all $i$. 

Consider the field 
\[
L = \Q(\sqrt{k_1^2 - 4}, \sqrt{k_2^2 - 4}, \dots, \sqrt{k_m^2 - 4})
\]
Since the quadratic fields $K_i$ are linearly disjoint, the Galois group $\text{Gal}(L/\Q)$ is isomorphic to $(\Z/2\Z)^m$. Specifically, for each $j \in \{1, \dots, m\}$, there exists an automorphism $\sigma_j \in \text{Gal}(L/\Q)$ such that $\sigma_j$ acts as the non-trivial automorphism on $K_j$ (mapping $\sqrt{k_j^2-4} \mapsto -\sqrt{k_j^2-4}$) and fixes $K_i$ for all $i \neq j$.

Recall that the root $r_j$ is given by $r_j = \frac{1}{2}(k_j + \sqrt{k_j^2-4})$, so $\sigma_j(r_j) = \frac{1}{2}(k_j - \sqrt{k_j^2-4}) = r_j^{-1}$. Applying $\sigma_j$ to \eqref{eq:mult_relation} gives
\[
    r_j^{-a_j} \prod_{i \neq j} r_i^{a_i} = 1.
\]
Dividing the original relation \eqref{eq:mult_relation} by this, all terms $r_i$ for $i \neq j$ cancel out, leaving $r_j^{2a_j} = 1$. Since $r_j > 1$, it must be $a_j = 0$. As this argument holds for every $j$, we conclude that $a_i = 0$ for all $i$, and therefore $p$ has quasi full rank.
\end{proof}

\begin{corollary} \label{lemma: coprime}
With notation as in Theorem \ref{thm:disjoint_extensions}, let $d_i$ denote the square-free part of $k_i^2-4$. If the numbers $d_i$ are pairwise coprime then the quadratic extensions $K_i=\Q(\sqrt{k_i^2-4})=\Q(\sqrt{d_i})$ are linearly disjoint over $\Q$, and hence Theorem \ref{thm:disjoint_extensions} holds.
\end{corollary}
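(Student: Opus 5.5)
We reduce the claim to the criterion in Definition \ref{def: linearly_disjoint}: the fields $\Q(\sqrt{d_1}),\ldots,\Q(\sqrt{d_m})$ are linearly disjoint if and only if, for every nonempty $I\subset\{1,\dots,m\}$, the product $\prod_{i\in I} d_i$ is not a perfect square. Once this is checked, the conclusion follows by applying Theorem \ref{thm:disjoint_extensions} directly. Note first that $\Q(\sqrt{k_i^2-4})=\Q(\sqrt{d_i})$, because $k_i^2-4=c_i^2d_i$ for some integer $c_i\geq 1$.

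The main step is to show that each $d_i>1$. Since $k_i\geq 3$, we have $(k_i-1)^2<k_i^2-4<k_i^2$, using $(k_i-1)^2=k_i^2-2k_i+1$ and $2k_i-1>4$ for $k_i\geq 3$. So $k_i^2-4$ lies strictly between two consecutive squares and is not a perfect square, which gives $d_i\neq 1$. Hence every $d_i$ is a square-free integer greater than $1$, and so it has at least one prime factor, appearing to exponent exactly one.

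Now take a nonempty $I$ and pick $j\in I$, with a prime $\ell$ dividing $d_j$. The $d_i$ are pairwise coprime, so $\ell\nmid d_i$ for $i\neq j$. Since $d_j$ is square-free, $\ell$ divides $d_j$ exactly once. It follows that the $\ell$-adic valuation of $\prod_{i\in I} d_i$ equals $1$, which is odd, so the product is not a square. By the equivalent form in Definition \ref{def: linearly_disjoint}, the extensions $K_i$ are linearly disjoint, and Theorem \ref{thm:disjoint_extensions} then shows that $p\in\Delta'_{2m}$ has quasi full rank.

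The only step that is not mere bookkeeping is ruling out $d_i=1$. It relies on the hypothesis $k_i\geq 3$; for $k_i=2$ we would get $k_i^2-4=0$, so the hypothesis is genuinely needed. I expect the rest to go through without difficulty.
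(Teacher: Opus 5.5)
Your proof is correct and follows the same route as the paper: show that no nonempty product $\prod_{i\in I} d_i$ is a square, then invoke the equivalent form of linear disjointness in Definition~\ref{def: linearly_disjoint}. The paper simply asserts this is clear, and your argument supplies what it leaves implicit: the bound $(k_i-1)^2<k_i^2-4<k_i^2$ gives $d_i>1$, and a prime $\ell\mid d_j$ then gives $\prod_{i\in I} d_i$ odd $\ell$-adic valuation.
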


\begin{proof}
    It is clear that no nontrivial product of the $d_i$'s is a square in $\N$, hence the extensions are linearly disjoint.
\end{proof}

\end{document}